\documentclass{amsart}
%    If you need symbols beyond the basic set, uncomment this command.
%\usepackage{amssymb}

%    If your article includes graphics, uncomment this command.
%\usepackage{graphicx}

%    If the article includes commutative diagrams, ...
%\usepackage[cmtip,all]{xy}
%\smartqed 
%This command right justifies \qed throughout the paper. 
\usepackage{graphicx}
%This package is used to insert figures.
\usepackage[letterpaper, margin=1in]{geometry}
%    Include other referenced packages here.
\usepackage{appendix}
\usepackage{amsfonts}
\usepackage{amsmath}
\usepackage{amssymb}
\usepackage{bbm}
\usepackage{bm}
\usepackage{mathrsfs}
\usepackage[inline, shortlabels]{enumitem}
\usepackage{verbatim}
\usepackage{setspace}
\usepackage{color}
\usepackage{pdfsync}
\usepackage{enumitem}
\usepackage{bbm}

%\numberwithin{equation}{section}
%\DeclareMathOperator*{\esssup}{ess\,sup}
%\DeclareMathOperator*{\essinf}{ess\,inf}
%\DeclareMathOperator*{\argmax}{arg\,max}
%\DeclareMathOperator*{\argmin}{arg\,min}
%\newtheorem{lemma}[section]{Lemma}
%\newtheorem{definition}[section]{Definition}
%\newtheorem{corollary}[section]{Corollary}
%\newtheorem{example}[section]{Example}
%\newtheorem{proposition}[section]{Proposition}
%\newtheorem{theorem}[section]{Theorem}
%\newtheorem{remark}[section]{Remark}
%\newtheorem{re}{Remark}[section]

\newcommand{\eps}{\varepsilon}
\newcommand{\vp}{\varphi}

\newcommand{\N}{\mathbb{N}}

\newcommand{\R}{\mathbb{R}}

\renewcommand{\P}{\mathbb{P}}
\renewcommand{\L}{\mathbb{L}}

\newcommand{\bU}{\mathbb{U}}

\newcommand{\Uc}{\mathcal{U}}
\newcommand{\Di}{\mathbb{D}_{i}}
\newcommand{\D}{\mathbb{D}}
\newcommand{\DT}{\mathbb{D}_{T}}
\newcommand{\unco}{\text{unco}}
\newcommand{\co}{\text{co}}

\newcommand{\as}{\mbox{-a.s.}}
%\renewcommand{\ae}{\mbox{-a.s.}}

%%%%%%%%%%%%%% display %%%%%%%%%%%%%%%%%%%

\def\essinf{{\rm ess}\!\inf\limits}
\def\esssup{{\rm ess}\!\sup\limits}

%%%%%%%%%%%%%% caligraphic %%%%%%%%%%%%%%%%%%%%%

%%%%%%%%%%%%% usual shortcuts %%%%%%%%%%%%%%%%%%

%%%%%%%%%% tableaux %%%%%%%%%%%%%%%
\def\be{\begin{eqnarray}}
\def\ee{\end{eqnarray}}
\def\beq{\begin{equation}}
\def\eeq{\end{equation}}

%    Update the information and uncomment if AMS is not the copyright
%    holder.
%\copyrightinfo{2009}{American Mathematical Society}
\newtheorem{assumption}{Assumption}[section]
\newtheorem{theorem}{Theorem}[section]
\newtheorem{corollary}{Corollary}[section]
\newtheorem{proposition}{Proposition}[section]
\newtheorem{lemma}{Lemma}[section]

\theoremstyle{definition}
\newtheorem{definition}{Definition}[section]

\theoremstyle{remark}
\newtheorem{remark}{Remark}[section]

\numberwithin{equation}{section}

\begin{document}

% \title[short text for running head]{full title}
\title{On the controller-stopper problems with controlled jumps}

%    Only \author and \address are required; other information is
%    optional.  Remove any unused author tags.

%    author one information
% \author[short version for running head]{name for top of paper}
\author{Erhan Bayraktar} \thanks{E. Bayraktar is supported in part by the National Science Foundation under grant DMS-1613170 and the Susan M. Smith Professorship.}
\address{Erhan Bayraktar \\ University of Michigan, Ann Arbor }
\curraddr{}
\email{erhan@umich.edu}
\thanks{}

%    author two information
\author{Jiaqi Li}
\address{Jiaqi Li \\ Goldman Sachs}
\curraddr{}
\email{lijiaqi@umich.edu}
\thanks{}

\keywords{Controller-stopper problems, stochastic target problems, stochastic Perron's method, viscosity solution}
\subjclass[2000]{Primary 93E20; secondary 49L25, 60J75, 60G40}
%    The 2010 edition of the Mathematics Subject Classification is
%    now available.  If you are citing a classification from the
%    new scheme, use the following input coding instead.
%\subjclass[2010]{Primary }

\date{}

\begin{abstract}
 We analyze the continuous time zero-sum and cooperative controller-stopper games of Karatzas and Sudderth [Annals of Probability, 2001], Karatzas and Zamfirescu [Annals of Probability, 2008] and Karatzas and Zamfirescu [Applied Mathematics and Optimization, 2005] when the volatility of the state process is controlled as in Bayraktar and Huang [SIAM Journal on Control and Optimization, 2013] but additionally when the state process has controlled jumps.
 We perform this analysis by first resolving the stochastic target problems (of Soner and Touzi [SIAM Journal on Control and Optimization, 2002; Journal of European Mathematical Society, 2002]) with a cooperative or a non-cooperative stopper and then embedding the original problem into the latter set-up. Unlike in Bayraktar and Huang [SIAM Journal on Control and Optimization, 2013] our analysis relies crucially on the Stochastic Perron method of Bayraktar and S\^{i}rbu [SIAM Journal on Control and Optimization, 2013] but not the dynamic programming principle, which is difficult to prove directly for games. 
\end{abstract}

\maketitle

%    Text of article.

%    Bibliographies can be prepared with BibTeX using amsplain,
%    amsalpha, or (for "historical" overviews) natbib style.

%    Insert the bibliography data here.

\section{Introduction}

The zero-sum stochastic games between a controller (who controls the state dynamics) and a stopper (who chooses the termination time of the game) were introduced by \cite{MR1481781} in discrete time and were then resolved by \cite{KS2001} for one-dimensional diffusions. Later, \cite{KZ2008} and \cite{BKY2010} considered this problem when the underlying diffusion is multi-dimensional but when only the drift is controlled.  See also \cite{MR2746174, MR2732930, ECTA:ECTA939, MR3023890, MR3476637}. This problem was later on solved for the case when the volatility is controlled and can be degenerate in \cite{Bayraktar_Huang} and was further generalized in \cite{MR3267150, MR3375882, BS-DG-2016}. 
The cooperative version of the game has received much attention as well. The general theoretical results on cooperative controller-stopper problems (also called problems of stochastic control with discretionary stopping) were obtained in \cite{Krylov1980,K1981,BL1982,MS1996,BC2004}. Later, a martingale treatment of the controller-stopper problems was developed in \cite{KZ2006}. Later this was analyzed in a more general case when the volatility is also controlled in \cite{MR3231620}. 

In this paper, we will generalize these two types of stopping games to the case in which the controller can control the jumps. We will prove our results by embedding these two problems into what is called ``stochastic target problems" \emph{with a stopper} and solving a more general problem. These problems are more difficult because the goal is to derive the state process to a target almost surely. The original stochastic target problems were introduced by \cite{DP_FOR_STP_AND_G_FLOW, SONER_TOUZI_STG} as a generalization of the super-replication problem in Mathematical Finance, in which the goal is to drive the controlled process to a given target at a given terminal time. There is an extensive literature on this subject which considered these problems with an increasing level of generality, see e.g. \cite{Bruno_jump_diffusion, Bouchard_Elie_Touzi_ControlledLoss, MR2585143, STG_CONTROLLED_LOSS, Moreau}. A survey of these results are given in Touzi's book \cite{Touzi_book}.
 In the two versions analyzed in this paper, the terminal time is a stopping time, which is either chosen by the controller (cooperative version), or the controller has to be robust against the choice of the stopping time (uncooperative version).
We use the jump diffusion model presented in \cite{BayraktarLi-Jump} (also see \cite{Moreau}) for the evolution of the state process and first analyze the target problems, one of which involves a cooperative stopper (Section \ref{sec:subhedging}), and the other a non-cooperative stopper who might play against the controller in a non-anticipative way (Section \ref{sec:superhedging}).  In each of these target problems,  we use stochastic Perron's method (of \cite{Bayraktar_and_Sirbu_SP_HJBEqn}), instead of relying on the geometric dynamic programming principle (see \cite{DP_FOR_STP_AND_G_FLOW}) to create a viscosity sub-solution and super-solution to its associated Hamilton-Jacobi-Bellman (HJB) equation. Then by establishing an embedding result similar in spirit to \cite{Bouchard_Equivalence} between the controller-stopper problems and the stochastic target problems and assuming that a comparison principle holds, we show that the value functions of the original controller-stopper problems are unique viscosity solutions of the corresponding HJB equations. It is interesting to note that in the cooperative controller-stopper problem we observe the \emph{face-lifting phenomenon}, i.e., there is a possible discontinuity at the terminal time, whereas there is no such occurrence in the non-cooperative version. In fact, this discontinuity is exactly characterized. The observation that there is discontinuity at the terminal time in the cooperative controller-stopper game goes back to \cite{Krylov1980}, but there the magnitude was not identified. It is also worth recording that the face-lifting occurs in both of the corresponding stochastic target problems, but the reasons for the discontinuity are different. 

Using the geometric dynamic programming principle, \cite{MR2585143} also considered the non-cooperative version of the stochastic target problem in the context of pricing American options with investment constraints in a Brownian diffusion type financial market. Our focus on the other hand is the embedding result in the spirit of \cite{Bouchard_Equivalence} and resolving both the cooperative and zero-sum controller-stopper games of Karatzas, Sudderth and Zamfirescu. Moreover, our results rely on the stochastic Perron's method of \cite{Bayraktar_and_Sirbu_SP_HJBEqn} in generating the sub- and super-solutions of the corresponding HJB equations without relying on the dynamic programming principle and skipping the technical difficulties due to measurability issues. In general, dynamic programming principle for stochastic differential games is quite complicated, see e.g. \cite{Bayraktar_Huang, Bouchard_Nutz_TargetGames, MR997385}. Stochastic Perron's method (a verification type result without smoothness), by working with appropriate envelopes instead of the value function itself, avoids having to prove a dynamic programming principle altogether.
This method is similar in spirit to the Perron's construction of viscosity solutions presented in \cite{UserGuide}. The crucial difference is that stochastic Perron's method constructs the viscosity sub- and super solution to envelope the value function of the control problem. See \cite{Bayraktar_and_Sirbu_SP_LinearCase, Bayraktar_and_Sirbu_SP_DynkinGames, MR3274519, MR3217159, MR3295681, Sirbu_SP_elementary_strategy, BayraktarLi, MR3535885, BCP14, BCS2016, 2016arXiv160807498B} for some recent results on the applications of this method.

The rest of the paper is organized as follows: In Section \ref{sec:prob},  the two stochastic target problems and their associated HJB equations are introduced. In Sections \ref{sec:superhedging}, using the stochastic Perron's method we will analyze the stochastic target problem in which the controller needs to be robust with respect to the choice of the stopping time by which the target needs to be reached. In Section~\ref{subsec:equivalence}, we establish the relationship between this problem and the zero-sum controller-stopper game. Using the results of the previous section and using a comparison principle, we demonstrate that the value function of the zero-sum controller-stopper game is the unique viscosity solution of the corresponding HJB equation. Sections~\ref{sec:subhedging} and \ref{sec:equivalence-subhedging} do the same for the cooperative controller-stopper problem. The main results of the paper are Theorems~\ref{thm: optimal control-superhedging}, \ref{thm: optimal control-subhedging}, and their corollaries. However, the results on the stochastic target problems in the auxiliary sections, Sections~\ref{sec:superhedging} and ~\ref{sec:subhedging}, where the bulk of the technical work is done, contain some new results which we also designated as theorems.
The Appendix contains technical results that are crucial in embedding the controller-stopper problems into stochastic target problems. 

\hfil \newline

\noindent \textbf{Notation.} Throughout this paper, the superscript $^{\top}$ stands for transposition, $|\cdot|$ for the Euclidean norm of a vector in $\R^n$, $] a,b [$ for the open interval in $\R$ from $a$ to $b$.  For a subset of $\mathcal{O}$ of $\R^n$, we denote by Int$(\mathcal{O})$ its interior and by $\text{cl}(\mathcal{O})$ it closure. We also denote the open ball of radius $r>0$ centered at $x\in \R^n$ by $B_r(x)$.  Inequalities and inclusion between random variables and random sets, respectively, are in the almost sure sense unless otherwise stated.

\section{Setting up the Stochastic Target Problems}\label{sec:prob}

We will define the stochastic target problems that we will use as auxiliary tools in establishing the characterization of the controller-stopper problems as the unique viscosity solutions of HJB equations. We will first have to introduce some relevant concepts and notation. These functions and the HJB equations for the target problems will appear at the end of this section. Before we introduce the set-up, we emphasize that the set-up has been used in \cite{Moreau} and \cite{BayraktarLi-Jump}. 

Given a complete probability space $(\Omega, \mathcal{F}, \mathbb{P})$, let $\{\lambda_i(\cdot,de)\}_{i=1}^I$ be a collection of independent integer-valued $E$-marked right-continuous point processes defined on this space. Here, $E$ is a Borel subset of $\mathbb{R}$ equipped with the Borel sigma field $\mathcal{E}$. Let $\lambda=(\lambda_1,\lambda_2,\cdots,\lambda_I)^{\top}$and $W = \{W_s\}_{0\leq s\leq T}$ be a $d$-dimensional Brownian motion defined on the same probability space such that $W$ and $\lambda$ are independent. Given $t\in[0,T]$, let $\mathbb{F}^t=\{\mathcal{F}^t_s, t\leq s\leq T\}$ be $\mathbb{P}$-augmented filtration generated by $W_{\cdot}-W_t$ and $\lambda([0,\cdot],de)-\lambda([0,t],de)$.  By convention, set $\mathcal{F}^t_s = \mathcal{F}^t_t $ for $0\leq s < t$. We will use $\mathcal{T}_t$ to denote the set of $\mathbb{F}^t$-stopping times valued in $[t,T]$. Given $\tau\in\mathcal{T}_t$, the set of $\mathbb{F}^t$-stopping times valued in $[\tau, T]$ will be denoted by $\mathcal{T}_{\tau}$.  
\begin{assumption}\label{assump: lambda intensity kernel}
$\lambda$ satisfies the following:
\begin{enumerate}
\item $\lambda(ds,de)$ has intensity kernel $m(de)ds$ such that $m_{i}$ is a Borel measure on $(E,\mathcal{E})$ for any $i=1,\cdots, I$ and $\hat{m}(E)<\infty$, where $m=(m_1,\cdots,m_I)^{\top}$ and $\hat{m}=\sum_{i=1}^{I} m_{i}$.
 \item $E=\text{supp}(m_i)$ for all $i=1,2,\cdots,I$. Here, $\text{supp}(m_{i}):=\{e\in E:  e\in N_{e}\in T_{E} \implies m_{i}(N_{e})>0\},$
 where $T_{E}$ is the topology on $E$ induced by the Euclidean topology. 
 \item There exists a constant $C>0$ such that 
 $$\mathbb{P}\left(\left\{\hat{\lambda}(\{s\}, E)\leq C \;\;\text{for all}\;\; s\in[0,T]\right\}\right)=1,\;\;\text{where}\;\;\hat{\lambda}=\sum_{i=1}^{I}\lambda_{i}.$$ 
\end{enumerate}
\end{assumption}
The above assumption implies that there are a finite number of jumps during any finite time interval. Let $\tilde{\lambda}(ds,de):=\lambda(ds,de)-m(de)ds$ be the associated compensated random measure.  

Let $\mathcal{U}^t_1$ be the collection of all the $\mathbb{F}^{t}$-predictable processes in $\mathbb{L}^2(\Omega\times[0,T], \mathcal{F}\otimes\mathcal{B}[0,T], \mathbb{P}\otimes \lambda_{L}; U_{1})$, where $\lambda_{L}$ is the Lebesgue measure on $\R$ and $U_{1}\subset \R^{q}$ for some $q\in\N$. Define $\mathcal{U}_2^{t}$ to be the collection of all the maps $\nu_{2}:\Omega\times[0,T]\times E\rightarrow \R^{n}$ which are $\mathcal{P}^{t}\otimes\mathcal{E}$ measurable such that 
\begin{equation}\label{eq:integrablility}
\|\nu_{2}\|_{\mathcal{U}_2^{t}}:=\left(\mathbb{E}\left[\int_{t}^{T}\int_E|\nu_{2}(s,e)|^2 \hat{m}(de)ds\right]\right)^{\frac{1}{2}}<\infty,
\end{equation}
 where $\mathcal{P}^{t}$ is the $\mathbb{F}^t$-predictable sigma-algebra on $\Omega\times[0,T]$. $\nu=(\nu_{1},\nu_{2})\in\mathcal{U}_{0}^{t} := \mathcal{U}_{1}^{t}\times\mathcal{U}^{t}_{2}$ takes value in the set $U:=U_1\times \mathbb{L}^2(E,\mathcal{E}, \hat{m};\R^{n})$.  
Let 
\begin{equation}
\D=[0,T]\times \R^d,\quad \Di=[0,T[\;\times\;\R^d \quad \text{ and } \DT=\{T\}\times \R^d.
\end{equation}
Given $z = (x, y)\in\R^d\times\R $, $t \in [0, T]$ and $\nu\in\mathcal{U}_{0}^{t}$, we consider the stochastic differential equations (SDEs)
\begin{equation}\label{eq: SDEs}
\begin{array}{l}
dX(s)=\mu_{X}(s,X(s),\nu(s))ds+\sigma_{X}(s,X(s),\nu(s))dW_s+\int_{E} \beta(s,X(s-),\nu_1(s),\nu_2(s,e), e)\lambda(ds,de), \vspace{0.07in}\\
dY(s)=\mu_{Y}(s,Z(s),\nu(s))ds+\sigma_{Y}^{\top}(s,Z(s),\nu(s)) dW_s+ \int_{E} b^{\top}(s,Z(s-),\nu_1(s),\nu_2(s,e), e)\lambda(ds,de), 
\end{array}
\end{equation}
with $(X(t), Y(t))=(x,y)$. Here, $Z=(X,Y)$. 
In \eqref{eq: SDEs}, 
 \begin{equation*}
\begin{array}{c}
\mu_X: \D\times U \rightarrow \R^d,\;\;\sigma_X: \D\times U\rightarrow \R^{d\times d},\;\; \beta: \D\times U_1\times\R^n\times E \rightarrow \R^{d\times I}, \\
\mu_Y: \D\times\R\times U \rightarrow \mathbb{R},\;\;\sigma_Y: \D\times\R\times U\rightarrow \R^{d},\;\; b: \D\times\R\times U_1\times\R^n\times E \rightarrow \R^{I}. 
\end{array}
\end{equation*}

Let $\mathcal{U}_{\unco}^{t}$ be the admissible control set for the stochastic target problem with a non-cooperative stopper, which consists of all $\nu \in \mathcal{U}^{t}_{0}$ such that for any compact set $C\subset \R^{d}\times \R$ and $\tau\in\mathcal{T}_{t}$, there exists a constant $K_{\unco}^{C, \nu,\tau}>0$ such that 
\begin{equation}\label{eq:admissibility_super}
\int_{E} b^{\top}(\tau,x, y, \nu_{1}(\tau), \nu_{2}(\tau, e), e ) \lambda(\{\tau\}, de) \geq - K_{\unco}^{C, \nu, \tau}\;\;\text{for all}\;\; (x,y)\in C.
\end{equation}
Let $\mathcal{U}_{\co}^{t}$ be the admissible control set for the stochastic target problem with a cooperative stopper, which consists of all $\nu \in \mathcal{U}^{t}_{0}$ such that for any compact set $C\subset \R^{d}\times \R$ and $\tau\in\mathcal{T}_{t}$, there exists a constant $K_{\co}^{C, \nu, \tau}>0$ such that 
\begin{equation}\label{eq:admissibility_sub}
\int_{E} b^{\top}(\tau,x, y, \nu_{1}(\tau), \nu_{2}(\tau, e), e ) \lambda(\{\tau\}, de) \leq  K_{\co}^{C, \nu,\tau}\;\;\text{for all}\;\; (x,y)\in C.
\end{equation}

\begin{assumption}\label{assump: regu_on_coeff}
Let $z=(x,y)$ and $u=(u_1,u_2)\in U = U_1\times\mathbb{L}^2(E,\mathcal{E},\hat{m};\R^{n})$. We use the notation $\|u\|_{U}:=|u_1|+\|u_2\|_{\hat{m}}$ and  $u(e):=(u_1,u_2(e))$ for the rest of the paper. 
\begin{enumerate}
\item  $\mu_X, \sigma_X$, $\mu_Y$ and $\sigma_Y$ are all continuous; 
\item  $\mu_X, \sigma_X$, $\mu_Y$, $\sigma_Y$ are Lipschitz in $z$ and locally Lipschitz in other variables. In addition,
\begin{equation*}
|\mu_X(t,x,u)|+|\sigma_X(t,x,u)|\leq L(1+|x|+\|u\|_{U}), \;\;|\mu_Y(t,x,y,u)|+|\sigma_Y(t,x,y,u)|\leq L(1+|y|+\|u\|_{U}).
\end{equation*} 
\item $b$ and $\beta$ are Lipschitz and grow linearly in all variables except $e$, but uniformly in $e$.
\end{enumerate}
\end{assumption}
\begin{remark} \label{remark: facts from assumption 1}
Assumptions \ref{assump: lambda intensity kernel} and \ref{assump: regu_on_coeff} guarantee that there exists a unique strong solution $(X_{t,x}^{\nu}, Y_{t,x,y}^{\nu})$ to \eqref{eq: SDEs} for any $\nu\in\Uc_{0}^{t}$. This follows from a simple arguments using Gronwall's Lemma; see e.g. for this result in a similar set-up in \cite{Pham1998}. Also see Lemma 17.1.1 in  \cite{MR3443368}.
\end{remark}

Now, we are ready to introduce the auxiliary stochastic target problems (with a stopper) that we will analyze in this paper. The main problems of zero-sum or cooperative controlled and stopper games will be introduced in Sections 4 and 6.

\subsection{\bf The controller and stopper problems and the HJB operators}\label{subsection: HJB}
Let $g: \R^d\rightarrow \R$ be a continuous function with polynomial growth. The value functions of the two target problems are defined respectively by
\begin{equation}\label{eq: value_function_super}
u_{\unco}(t,x):=\inf\left\{y:\; \exists \nu \in \Uc^t_{\unco} \text{  such that  } Y_{t,x,y}^{\nu}(\rho)\geq g(X_{t,x}^{\nu}(\rho)) \; \mathbb{P}-\text{a.s. for all }\rho\in\mathcal{T}_{t}\right\},
\end{equation}
\begin{equation}\label{eq: value_function_sub}
u_{\co}(t,x):=\sup\left\{y:\; \exists \nu \in \Uc^t_{\co}\text{ and }\rho\in\mathcal{T}_{t} \text{  such that  } Y_{t,x,y}^{\nu}(\rho)\leq g(X_{t,x}^{\nu}(\rho)) \; \mathbb{P}-\text{a.s.}\right\}.
\end{equation}
Denote $b=(b_1,b_2,\cdots,b_I)^{\top}$ and $\beta=(\beta_1,\beta_2,\cdots,\beta_I)$. For a given $\vp \in C(\D)$, we define the relaxed semi-limits 
\begin{equation}\label{eq: HJB operators-super}
H^{*}(\Theta, \vp):=\limsup_{\begin{subarray}{c}\eps\searrow 0,\; \Theta^{'}\rightarrow\Theta \\ \eta\searrow 0,\;  \psi\overset{\text{u.c.}}{\longrightarrow} \vp\end{subarray}} H_{\eps,\eta} (\Theta^{'}, \psi) \;\; \text{and} \;\; H_{*}(\Theta, \vp):=\liminf_{\begin{subarray}{c}\eps\searrow 0,\; \Theta^{'}\rightarrow\Theta \\  \eta\searrow 0,\; \psi \overset{\text{u.c.}}{\longrightarrow} \vp\end{subarray}} H_{\eps, \eta} (\Theta^{'}, \psi), \footnote{The convergence $\psi \overset{\text{u.c.}}{\longrightarrow} \vp$ is understood in the sense that $\psi$ converges uniformly on compact subsets to $\vp$.}
\end{equation}
\begin{equation}\label{eq: HJB operators-sub}
F^{*}(\Theta, \vp):=\limsup_{\begin{subarray}{c}\eps\searrow 0,\; \Theta^{'}\rightarrow\Theta \\ \eta\searrow 0,\;  \psi\overset{\text{u.c.}}{\longrightarrow} \vp\end{subarray}} F_{\eps,\eta} (\Theta^{'}, \psi) \;\; \text{and} \;\; F_{*}(\Theta, \vp):=\liminf_{\begin{subarray}{c}\eps\searrow 0,\; \Theta^{'}\rightarrow\Theta \\  \eta\searrow 0,\; \psi \overset{\text{u.c.}}{\longrightarrow} \vp\end{subarray}} F_{\eps, \eta} (\Theta^{'}, \psi).
\end{equation}
Here, for $\Theta = (t,x,y,p,A)\in\D\times\R\times\R^d\times\mathbb{M}^d \;( \mathbb{M}^d:=\R^{d\times d}$), $\vp\in C(\D)$, $\eps\geq 0$ and $\eta\in[-1,1]$, 
\begin{equation*}
\begin{gathered}
H_{\eps, \eta}(\Theta, \vp):=\sup_{u\in\mathcal{N}_{\eps, \eta}(t,x,y,p,\vp)} \mathbf{L}^u(\Theta),\; F_{\eps, \eta}(\Theta, \vp):=\inf_{u\in\mathcal{M}_{\eps, \eta}(t,x,y,p,\vp)} \mathbf{L}^u(\Theta), \text{where},\\
\begin{array}{c}
\mathbf{L}^u(\Theta):=\mu_{Y}(t,x,y,u) -\mu_{X}^{\top}(t,x,u) p -\frac{1}{2}\text{Tr}[\sigma_X\sigma_X^{\top}(t,x,u)A],\;\;N^{u}(t,x,y,p):=\sigma_Y(t,x,y,u)-\sigma_X^{\top}(t,x,u)p, \\
\Delta^{u,e}(t,x,y,\vp):=\min_{1\leq i \leq I} \{b_i(t,x,y,u(e),e)-\vp(t,x+\beta_i(t,x,u(e),e))+\vp(t,x) \}, \\
\Pi^{u,e}(t,x,y,\vp):=\max_{1\leq i \leq I} \{b_i(t,x,y,u(e),e)-\vp(t,x+\beta_i(t,x,u(e),e))+\vp(t,x) \}, \\
\mathcal{N}_{\eps,\eta}(t,x,y,p, \vp):=\{u\in U: |N^{u}(t,x,y,p)|\leq\eps\text{ and }
\Delta^{u,e}(t,x,y,\vp)\geq\eta\;\text{for}\; \hat{m}-\text{a.s.}\; e\in E \}, \\
\mathcal{M}_{\eps,\eta}(t,x,y,p, \vp):=\{u\in U: |N^{u}(t,x,y,p)|\leq\eps\text{ and }
\Pi^{u,e}(t,x,y,\vp)\leq\eta\;\text{for}\; \hat{m}-\text{a.s.}\; e\in E \},
\end{array}
\end{gathered}
\end{equation*}
where $\hat{m}$ is as in Asumption \ref{assump: lambda intensity kernel}. For our later use, we also define the following:
\begin{equation*}
\begin{array}{c}
J^{u,e}_i(t,x,y,\vp):= b_i(t,x,y,u(e),e)-\vp(t,x+\beta_i(t,x,u(e),e))+\vp(t,x), \nonumber \\
\overline{J}^{u,e}(t,x,y,\vp):= (J^{u,e}_1(t,x,y,\vp), \cdots, J^{u,e}_I(t,x,y,\vp))^{\top}, \nonumber \\
 \mathscr{L}^{u}\vp(t,x):=\vp_t(t,x)+ \mu_{X}^{\top}(t,x,u)D\vp(t,x)+\frac{1}{2}\text{Tr}[\sigma_X\sigma_X^{\top}(t,x,u)D^2\vp(t,x)]. 
 \end{array}
  \end{equation*}
\begin{remark}
For simplicity, we denote $H^*(t,x,\vp(t,x),D\vp(t,x), D^2\vp(t,x),\vp)$ by $H^*\vp(t,x)$ for $\vp\in C^{1,2}(\D)$. For $\vp\in C^2(\R^d)$, we denote $H^*(T,x,\vp(x),D\vp(x), D^2\vp(x),\vp)$ by $H^*\vp(x)$. We will use similar notation for $H_*, F^{*}, F_{*}$ and other operators in later sections.
\end{remark}

\begin{comment}
Later, we will produce a viscosity super-solution and sub-solution, respectively, to
 \begin{gather}\label{eq: super_HJB equation_interior}
-\partial_t\vp(t,x)+ H^*\vp(t,x)\geq 0 \;\;\text{in}\;\;\Di\;\;\text{and} \\
\label{eq: sub_HJB equation_interior}
 -\partial_t\vp(t,x)+H_*\vp(t,x)\leq 0 \;\;\text{in}\;\;\Di.
 \end{gather} 
 \end{comment}

\begin{definition}[Concatenation]
Let $\nu_{1}, \nu_{2} \in \Uc^t_{\unco}$ (resp. $\Uc^t_{\co}$ ), $\tau \in \mathcal{T}_t$. The concatenation of $\nu_1$ and $ \nu_2 $ at  $\tau$ is defined as $
\nu_1\otimes_{\tau} \nu_2 := \nu_1 \mathbbm{1}_{[0,\tau[}+ \nu_2 \mathbbm{1}_{[\tau,T]} \in\Uc_{\unco}^{t}$ (resp. $\Uc^t_{\co}$).\footnote{This can be easily checked.}
\end{definition}  
We will carry out Perron's method to study the stochastic target problems with a non-cooperative stopper and a cooperative stopper, respectively, in Section \ref{sec:superhedging} and \ref{sec:subhedging}.

\section{Analysis of $u_{\unco}$ defined in \eqref{eq: value_function_super}}\label{sec:superhedging}
In this section, we use stochastic Perron's method to prove that 
an appropriate upper envelope of a class of carefully defined functions is a viscosity sub-solution of
\begin{equation}\label{eq: sub_HJB equation_interior-superhedging}
\min\{\vp(t,x)-g(x), -\partial_t\vp(t,x)+ H_*\vp(t,x)\}\leq 0 \;\;\text{in}\;\;\Di.
 \end{equation} 
and an appropriate lower envelope of $u_{\unco}$ is a viscosity super-solution of
 \begin{equation}\label{eq: super_HJB equation_interior-superhedging}
\min\{\vp(t,x)-g(x), -\partial_t\vp(t,x)+ H^*\vp(t,x)\}\geq 0 \;\;\text{in}\;\;\Di
\end{equation}
The boundary conditions will be discussed in Theorem \ref{thm: bd_viscosity_property-superhedging}. These envelopes will be defined in terms of the collections of stochastic super- and sub-solutions that we will define next.
\begin{definition} [Stochastic super-solutions] \label{def: Stochasticsuper-solution-super}
A continuous function $w: \D \rightarrow \mathbb{R}$ is called a stochastic super-solution of \eqref{eq: super_HJB equation_interior-superhedging} if
\begin{enumerate}
\item $w(t, x)\geq g(x)$ and for some $C>0$ and $n\in\N$,\footnote{$C$ and $n$ may depend on $w$ and $T$. This also applies to Definition \ref{def: Stochasticsub-solution-super}, \ref{def: Stochasticsuper-solution_subhedging} and \ref{def: Stochasticsub-solution_subhedging}.} $|w(t,x)|\leq C(1+|x|^{n})$ for all $(t,x)\in \D$.
\item Given $(t,x,y)\in \D\times\mathbb{R}$, for any $\tau\in\mathcal{T}_t$ and $\nu\in \Uc^t_{\unco}$, there exists $\tilde {\nu}\in \Uc^t_{\unco}$ such that  
$$Y(\rho )\geq w(\rho, X(\rho )) \quad \mathbb{P}-\text{a.s.} \text{ on } \{ Y(\tau)\geq w(\tau, X(\tau)) \}$$ for all $\rho \in \mathcal{T}_{\tau}$,
where $X:= X_{t,x}^{\nu\otimes_{\tau}\tilde{\nu}}$ and $Y:=Y_{t,x,y}^{\nu\otimes_{\tau}\tilde{\nu}}$.
\end{enumerate}
\end{definition}
\begin{definition} [Stochastic sub-solutions] \label{def: Stochasticsub-solution-super}
A continuous function $w: \D \rightarrow \mathbb{R}$ is called a stochastic sub-solution of \eqref{eq: sub_HJB equation_interior-superhedging} if
\begin{enumerate}
\item $w(T, x)\leq g(x)$ and for some $C>0$ and $n\in\N$, $|w(t,x)|\leq C(1+|x|^{n})$ for all $(t,x)\in \D$.
\item Given $(t,x,y)\in \D\times\mathbb{R}$, for any $\tau\in\mathcal{T}_t$, $\nu\in \Uc_{\unco}^t$ and $B\subset \{Y(\tau)<w(\tau,X(\tau))\}$ satisfying $B\in\mathcal{F}_\tau^t$ and $\mathbb{P}(B)>0$, there exists $\rho\in\mathcal{T}_{\tau}$ such that
$$\mathbb{P}(Y(\rho )< g(X(\rho ))|B)>0.$$
Here, we use the notation $X:= X_{t,x}^{\nu}$ and $Y:=Y_{t,x,y}^{\nu}$.
\end{enumerate}
\end{definition}
\noindent Denote the sets of stochastic super-solutions and sub-solutions by $\bU^+_{\unco}$ and $\bU^-_{\unco}$, respectively.
\begin{assumption}\label{assump:semisolution_not_empty_super}
$\bU^+_{\unco}$ and $\bU^-_{\unco}$ are not empty. 
\end{assumption}
We will provide sufficient conditions which guarantee Assumption \ref{assump:semisolution_not_empty_super} in the Appendix A. These conditions will be useful once we analyze the zero-sum controller-stopper game. 

We are now ready to define the envelopes we mentioned above.
\begin{definition}
Let $u^+_{\unco}:=\inf_{w \in \mathbb{U}^{+}_{\unco}} w$ and $u^-_{\unco}:=\sup_{w \in \mathbb{U}^{-}_{\unco}} w$.
\end{definition}

\begin{remark}
\begin{itemize}
\item For any stochastic super-solution $w$, choose $\tau=t$. Then there exists $\tilde{\nu}\in \Uc_{\unco}^t$ such that
$Y_{t,x,y}^{\tilde{\nu}}(\rho)\geq w\left(\rho, X_{t,x}^{\tilde{\nu}}(\rho)\right)\geq g\left(X_{t,x}^{\tilde{\nu}}(\rho)\right) \mathbb{P}-\text{a.s. for all } \rho\in\mathcal{T}_{t}
 \text{ if }  y\geq w(t,x).$
Hence, $y\geq w(t,x)$ implies that $y\geq u_{\unco}(t,x)$ from \eqref{eq: value_function_super}. This means that $w\geq u_{\unco}$ and $u^+_{\unco}\geq u_{\unco}$. By the definition of $\bU^{+}_{\unco}$, we know that $u^+_{\unco}(t,x)\geq g(x)$ for all $(t,x)\in\D$.

\item  For any stochastic sub-solution $w$, if $y<w(t,x)$,  by choosing $\tau=t$, we get from the second property of Definition \ref{def: Stochasticsub-solution-super} that for any $\nu\in \Uc^t$,
$\mathbb{P}\left(Y_{t,x,y}^{\nu}(\rho)<  g(X_{t,x}^{\nu}(\rho))\right)> 0
$ for some $\rho\in\mathcal{T}_{t}$.
Therefore, from \eqref{eq: value_function_super}, $y<w(t,x)$ implies that $y\leq u_{\unco}(t,x)$. This means that $w\leq u_{\unco}$ and $u^-_{\unco}\leq u_{\unco}$.  By the definition of $\bU^{-}_{\unco}$, it holds that $u^{-}_{\unco}(T,x)\leq g(x)$ for all $x\in\R^d$.
\end{itemize}
\end{remark}
In short,
\begin{equation}\label{eq:intfvmavp-superhedging}
u^-_{\unco} = \sup _{w\in \mathbb{U}^-_{\unco}} w\leq  u_{\unco} \leq \inf _{w\in \mathbb{U}^+_{\unco}}w= u_{\unco}^+.
\end{equation}

\subsection{Viscosity Property in $\Di$}\label{subsec:interior}
As in \cite{Moreau}, the proof of the sub-solution property requires a regularity assumption on the set-valued map $\mathcal{N}_{0,\eta}(\cdot, \psi)$.
\begin{assumption}\label{assump: regularity-super}
For $\psi\in C(\D)$, $\eta>0$, let $B$ be a subset of $\D\times\R\times\R^d$ such that $\mathcal{N}_{0,\eta}(\cdot, \psi)\neq\emptyset$ on $B$. Then for every $\eps>0$, $(t_0,x_0,y_0,p_0)\in Int(B)$ and $u_0\in\mathcal{N}_{0, \eta}(t_0,x_0,y_0,p_0,\psi) $, there exists an open neighborhood $B'$ of $(t_0,x_0,y_0,p_0)$ and a locally Lipschitz continuous map $\hat{\nu}$ defined on $B'$ such that $\|\hat{\nu}(t_0,x_0,y_0,p_0)-u_0\|_{U}\leq \eps$ and $\hat{\nu}(t,x,y,p)\in \mathcal{N}_{0, \eta}(t,x,y,p, \psi)$.
\end{assumption}  
The following two lemmas can be easily checked. Hence, we omit the proofs. 
\begin{lemma}
$\bU^{+}_{\unco}$ and $\bU^{-}_{\unco}$ are closed under pairwise minimization and maximization, respectively. That is, 
\begin{enumerate}
\item if $w_1, w_2\in \mathbb{U}^+_{\unco}$, then $w_1\wedge w_2\in \mathbb{U}^+_{\unco}$;\; \item if $w_1,w_2\in \mathbb{U}^-_{\unco}$, then $w_1\vee w_2\in \mathbb{U}^-_{\unco}$.
\end{enumerate}
\end{lemma}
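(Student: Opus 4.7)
The plan is to verify the two conditions of Definitions~\ref{def: Stochasticsuper-solution-super} and~\ref{def: Stochasticsub-solution-super} for the pointwise minimum and maximum, respectively. The regularity and growth conditions are immediate: the minimum (resp.\ maximum) of two continuous functions is continuous, the polynomial growth passes through $\wedge$ and $\vee$ with the larger constant, $w_1\wedge w_2\geq g$ when both $w_i\geq g$, and $w_1\vee w_2(T,\cdot)\leq g$ when both $w_i(T,\cdot)\leq g$. So the entire content is in verifying the dynamic/martingale-type condition~(2) in each case.

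For (1), fix $(t,x,y)\in\D\times\R$, $\tau\in\mathcal{T}_t$, and $\nu\in\mathcal{U}_{\unco}^t$, and set $w:=w_1\wedge w_2$. I would partition $\{Y_{t,x,y}^{\nu}(\tau)\geq w(\tau,X_{t,x}^{\nu}(\tau))\}$ into the two $\mathcal{F}^t_{\tau}$-measurable pieces
\[
A_1:=\{Y^{\nu}(\tau)\geq w(\tau,X^{\nu}(\tau))\}\cap\{w_1(\tau,X^{\nu}(\tau))\leq w_2(\tau,X^{\nu}(\tau))\},\qquad A_2:=\{Y^{\nu}(\tau)\geq w(\tau,X^{\nu}(\tau))\}\setminus A_1,
\]
so that $Y^{\nu}(\tau)\geq w_i(\tau,X^{\nu}(\tau))$ on $A_i$. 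Applying the super-solution property of each $w_i$ to the same $\nu$ and $\tau$ yields $\tilde\nu_i\in\mathcal{U}_{\unco}^t$ with $Y^{\nu\otimes_\tau\tilde\nu_i}_{t,x,y}(\rho)\geq w_i(\rho,X^{\nu\otimes_\tau\tilde\nu_i}_{t,x}(\rho))$ on $A_i$ for every $\rho\in\mathcal{T}_\tau$. Define the glued control $\tilde\nu:=\tilde\nu_1\mathbbm{1}_{A_1}+\tilde\nu_2\mathbbm{1}_{A_1^c}$. Since $\tilde\nu$ and $\tilde\nu_i$ agree on $A_i$ up to and after $\tau$, the corresponding state trajectories coincide on $A_i$, giving $Y\geq w_i\geq w$ pointwise on $A_i$ for each $i$, hence the required inequality on $A_1\cup A_2$.

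The main technical step is checking that $\tilde\nu\in\mathcal{U}_{\unco}^t$. Predictability follows because $A_1\in\mathcal{F}^t_\tau$ and $\tilde\nu$ is only used on $[\tau,T]$ through the concatenation, so $\mathbbm{1}_{A_1}\tilde\nu_1+\mathbbm{1}_{A_1^c}\tilde\nu_2$ is $\mathcal{P}^t$-measurable after $\tau$ (and can be defined arbitrarily before $\tau$, since the concatenation discards those values). The $\mathbb{L}^2$ bound in~\eqref{eq:integrablility} follows from $\|\tilde\nu\|_{\mathcal{U}^t_2}^2\leq\|\tilde\nu_1\|_{\mathcal{U}^t_2}^2+\|\tilde\nu_2\|_{\mathcal{U}^t_2}^2$. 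For the jump admissibility~\eqref{eq:admissibility_super}, given a compact $C$ and $\sigma\in\mathcal{T}_t$, one may take $K_{\unco}^{C,\tilde\nu,\sigma}:=K_{\unco}^{C,\tilde\nu_1,\sigma}\vee K_{\unco}^{C,\tilde\nu_2,\sigma}$ and note that on each $A_i$ the integrand is the one associated with $\tilde\nu_i$.

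For (2), fix $(t,x,y)$, $\tau\in\mathcal{T}_t$, $\nu\in\mathcal{U}_{\unco}^t$ and $B\in\mathcal{F}^t_\tau$ with $\P(B)>0$ and $B\subset\{Y^{\nu}(\tau)<(w_1\vee w_2)(\tau,X^{\nu}(\tau))\}$. I would partition $B=B_1\cup B_2$ via $B_1:=B\cap\{w_1\geq w_2\}$ and $B_2:=B\setminus B_1$ at $(\tau,X^{\nu}(\tau))$, so that on $B_i$ one has $Y^{\nu}(\tau)<w_i(\tau,X^{\nu}(\tau))$. Since $\P(B)>0$ at least one $\P(B_i)>0$; say $i=1$. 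Because $B_1\in\mathcal{F}^t_\tau$, the sub-solution property of $w_1$ applied to $\nu$ and $B_1$ yields $\rho\in\mathcal{T}_\tau$ with $\P(Y^{\nu}(\rho)<g(X^{\nu}(\rho))\mid B_1)>0$, which gives $\P(Y^{\nu}(\rho)<g(X^{\nu}(\rho))\mid B)>0$ as required. The only subtlety here, which is the main obstacle in both parts, is the careful bookkeeping of $\mathcal{F}^t_\tau$-measurability of the partitioning sets and the admissibility of the glued control in part~(1); both are handled routinely once the partition is defined as above.
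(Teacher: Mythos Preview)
Your argument is correct and is exactly the standard partition-and-glue verification the paper has in mind when it writes ``can be easily checked'' and omits the proof. The only points worth tightening are cosmetic: in part~(1) the glued control need only be defined as a predictable process on $[\tau,T]$ since the concatenation $\nu\otimes_\tau\tilde\nu$ overrides it before $\tau$, and the predictability of $\mathbbm{1}_{A_1}\mathbbm{1}_{[\tau,T]}$ follows from $A_1\in\mathcal{F}^t_\tau$ via the usual fact that $A\times]\tau,T]$ is predictable for $A\in\mathcal{F}_\tau$; in part~(2) you might note explicitly that the $\rho$ you obtain for $B_1$ works for $B$ because $\P(\{Y(\rho)<g(X(\rho))\}\cap B)\geq\P(\{Y(\rho)<g(X(\rho))\}\cap B_1)>0$.
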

\begin{lemma}\label{lem: monotone seq approaches v+ or v_--superhedging}
There exists a non-increasing sequence $\{w_{n}\}_{n=1}^{\infty}\subset\bU^{+}_{\unco}$ such that $w_n\searrow u^+_{\unco}$ and a non-decreasing sequence $\{v_{n}\}_{n=1}^{\infty}\subset\bU^{-}_{\unco}$ such that $v_n\nearrow u^-_{\unco}$.
\end{lemma}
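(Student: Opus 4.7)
The strategy is to combine the lattice property just established in the preceding lemma with the separability of $\D=[0,T]\times\R^d$ to extract a single monotone sequence whose pointwise limit realizes the envelope $u^{+}_{\unco}$ (resp.\ $u^{-}_{\unco}$). I will describe the construction for the super-solution side; the sub-solution statement follows line by line with pairwise minima replaced by pairwise maxima throughout and inequalities reversed.

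The first step would be to fix a countable dense subset $D=\{(t_k,x_k)\}_{k\geq 1}$ of $\D$. By the very definition of $u^{+}_{\unco}$ as the pointwise infimum over $\bU^{+}_{\unco}$, for each $k$ I can choose a sequence $\{w^{k}_{m}\}_{m\geq 1}\subset \bU^{+}_{\unco}$ with $w^{k}_{m}(t_k,x_k)\downarrow u^{+}_{\unco}(t_k,x_k)$ as $m\to\infty$. The resulting double-indexed family $\{w^{k}_{m}\}_{k,m\geq 1}$ is countable, so I enumerate it as $\{\varphi_n\}_{n\geq 1}$ and set
\begin{equation*}
w_n := \varphi_1\wedge\varphi_2\wedge\cdots\wedge\varphi_n.
\end{equation*}
An induction using closure under pairwise minimization (the preceding lemma) places every $w_n$ in $\bU^{+}_{\unco}$, and $\{w_n\}$ is non-increasing by construction. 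Writing $\bar w:=\lim_n w_n$, the bound $w_n\geq u^{+}_{\unco}$ inherent in membership of $\bU^{+}_{\unco}$ gives $\bar w\geq u^{+}_{\unco}$; conversely, at each $(t_k,x_k)\in D$, since every $w^{k}_{m}$ eventually appears among $\varphi_1,\ldots,\varphi_n$, one has $w_n(t_k,x_k)\leq w^{k}_{m}(t_k,x_k)$ for $n$ large and then letting $m\to\infty$ yields $\bar w(t_k,x_k)\leq u^{+}_{\unco}(t_k,x_k)$. Equality on the dense set $D$ follows.

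The step I expect to be the main obstacle is promoting this equality from $D$ to every $(t,x)\in\D$. The tools available are the continuity of each $w_n$, which makes $\bar w$ upper semi-continuous as a decreasing limit of continuous functions, and the fact that $u^{+}_{\unco}$ is itself upper semi-continuous as a pointwise infimum of continuous functions. The flexibility of the lattice structure, namely that $w_n\wedge w^{\ast}\in\bU^{+}_{\unco}$ for any near-minimizer $w^{\ast}\in\bU^{+}_{\unco}$ at an arbitrary test point $(t_0,x_0)$, is then used to preclude any persistent strict inequality $\bar w(t_0,x_0)>u^{+}_{\unco}(t_0,x_0)$; if such a gap existed, one could insert such a $w^{\ast}$ into the enumeration and rerun the diagonal construction to push $\bar w(t_0,x_0)$ strictly below its supposed limit. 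The analogous construction using pairwise maxima and the closure statement for $\bU^{-}_{\unco}$ produces the required non-decreasing sequence $v_n\nearrow u^{-}_{\unco}$.
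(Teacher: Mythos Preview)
The paper omits the proof of this lemma, so there is nothing to compare against directly; I evaluate your argument on its own merits. Your construction via a countable dense set $D$ and running pairwise minima is natural, and you correctly flag the passage from equality on $D$ to equality on all of $\D$ as the crux. But the argument you give for that step does not close the gap. Two USC functions that agree on a dense set and satisfy $\bar w\ge u^{+}_{\unco}$ need not coincide, and your ``insert $w^{\ast}$ and rerun'' fix repairs only one point at a time; with possibly uncountably many bad points, a countable enumeration cannot absorb them all. A concrete obstruction: take the family of all continuous maps $[0,1]\to[0,1]$ (closed under $\wedge$, pointwise infimum identically $0$), fix an irrational $x_0$, and for every rational $q$ and every $m$ choose as near-minimizer a continuous $f^{q,m}$ with $f^{q,m}(q)=0$ but $f^{q,m}(x_0)=1$. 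Your running minima then give $\bar w(x_0)=1\neq 0$, so the dense-set construction can genuinely fail under an adversarial choice of near-minimizers.

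The standard remedy is a Lindel\"of argument one dimension up. Continuity of each $w\in\bU^{+}_{\unco}$ makes $\{(t,x,y)\in\D\times\R:y>w(t,x)\}$ open, and
\[
\bigl\{(t,x,y):y>u^{+}_{\unco}(t,x)\bigr\}\;=\;\bigcup_{w\in\bU^{+}_{\unco}}\bigl\{(t,x,y):y>w(t,x)\bigr\}.
\]
Since $\D\times\R$ is second countable, this open cover has a countable subcover indexed by $\{w^{(n)}\}_{n\ge 1}\subset\bU^{+}_{\unco}$, which forces $u^{+}_{\unco}=\inf_n w^{(n)}$ at \emph{every} point, not merely on a dense set. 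Setting $w_n:=w^{(1)}\wedge\cdots\wedge w^{(n)}$ and invoking the preceding lattice lemma yields the desired non-increasing sequence in $\bU^{+}_{\unco}$; the construction for $\bU^{-}_{\unco}$ is symmetric with $\wedge$ replaced by $\vee$ and the strict epigraph in place of the strict hypograph.
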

\begin{theorem} \label{thm: main theorem_interior-superhedging}
Under Assumptions \ref{assump: lambda intensity kernel}, \ref{assump: regu_on_coeff}, \ref{assump:semisolution_not_empty_super} and \ref{assump: regularity-super}, 
$u^+_{\unco}$ is an upper semi-continuous (USC) viscosity sub-solution of \eqref{eq: sub_HJB equation_interior-superhedging}. On the other hand, under Assumptions \ref{assump: lambda intensity kernel}, \ref{assump: regu_on_coeff} and \ref{assump:semisolution_not_empty_super}, $u^-_{\unco}$ is a lower semi-continuous (LSC) viscosity super-solution of \eqref{eq: super_HJB equation_interior-superhedging}.
\end{theorem}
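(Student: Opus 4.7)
The approach is the stochastic Perron scheme of \cite{Bayraktar_and_Sirbu_SP_HJBEqn} adapted to jump-diffusions with an obstacle. The semi-continuity halves are immediate: by Lemma~\ref{lem: monotone seq approaches v+ or v_--superhedging}, $u^+_{\unco}$ is the pointwise infimum of the non-increasing sequence $\{w_n\}\subset\bU^+_{\unco}$ of continuous functions, hence USC, and $u^-_{\unco}$ is the pointwise supremum of a non-decreasing sequence $\{v_n\}\subset\bU^-_{\unco}$ of continuous functions, hence LSC. The viscosity statements then reduce to two contradiction arguments in which one builds a strictly better stochastic super- or sub-solution by local perturbation.

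For the sub-solution property of $u^+_{\unco}$, suppose there exist $(t_0,x_0)\in\Di$ and $\vp\in C^{1,2}(\D)$ touching $u^+_{\unco}$ strictly from above at $(t_0,x_0)$ with both $\vp(t_0,x_0)-g(x_0)>0$ and $-\partial_t\vp(t_0,x_0)+H_*\vp(t_0,x_0)>0$. The liminf definition of $H_*$ supplies $\eps_0,\eta_0,\kappa>0$ and a neighborhood of $\Theta_0:=(t_0,x_0,\vp(t_0,x_0),D\vp(t_0,x_0),D^2\vp(t_0,x_0))$ on which $-\partial_t\vp+H_{\eps_0,\eta_0}(\cdot,\vp)>\kappa$; selecting an optimizer $u_0\in\mathcal{N}_{\eps_0,\eta_0}(\Theta_0,\vp)$ and invoking Assumption~\ref{assump: regularity-super}, I upgrade $u_0$ to a locally Lipschitz Markovian selector $\hat\nu(t,x,y,p)\in\mathcal{N}_{0,\eta_0/2}(t,x,y,p,\vp)$ satisfying $\mathbf{L}^{\hat\nu}>\partial_t\vp+\kappa/2$. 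With the perturbation
\[
\vp^\delta(t,x):=\vp(t,x)-\delta+\delta'\bigl((t-t_0)^2+|x-x_0|^2\bigr),
\]
I choose $r,\delta,\delta'>0$ with $\delta<\delta' r^2$ small enough that $\vp^\delta\geq g$ on $\overline{B_r(t_0,x_0)}$ and $\vp^\delta>w_n$ on $\partial B_r(t_0,x_0)$ for $n$ large (using USC of $u^+_{\unco}$, the monotone convergence $w_n\searrow u^+_{\unco}$, and a compactness argument on the open sets $\{w_n<\vp^\delta\}\cap\partial B_r$), while $\vp^\delta(t_0,x_0)<u^+_{\unco}(t_0,x_0)$. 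The candidate
\[
\tilde w(t,x):=\begin{cases}w_n(t,x)\wedge\vp^\delta(t,x),&(t,x)\in\overline{B_r(t_0,x_0)},\\ w_n(t,x),&\text{otherwise,}\end{cases}
\]
is continuous and dominates $g$. Verifying $\tilde w\in\bU^+_{\unco}$ proceeds by concatenation: given $\tau,\nu$, one uses the super-solution property of $w_n$ outside $B_r$ and switches, at each entry of $(s,X(s))$ into $B_r$, to the feedback $\hat\nu(s,X(s),\vp^\delta(s,X(s)),D\vp^\delta(s,X(s)))$; It\^o's formula applied to $Y-\vp^\delta(\cdot,X)$, combined with $\mathbf{L}^{\hat\nu}>\partial_t\vp^\delta+\kappa/4$, $N^{\hat\nu}=0$, $\Delta^{\hat\nu,e}\geq\eta_0/2$, and admissibility \eqref{eq:admissibility_super}, preserves $Y\geq\vp^\delta(\cdot,X)$ from the event $\{Y(\tau)\geq\vp^\delta(\tau,X(\tau))\}$ until the next exit, after which the super-solution property of $w_n$ resumes. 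Since $\tilde w(t_0,x_0)<u^+_{\unco}(t_0,x_0)$, this contradicts $u^+_{\unco}=\inf_{w\in\bU^+_{\unco}}w$.

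For the super-solution property of $u^-_{\unco}$, suppose at some $(t_0,x_0)\in\Di$ a $C^{1,2}$ test $\vp$ touches $u^-_{\unco}$ strictly from below with $\min\{\vp(t_0,x_0)-g(x_0),\,-\partial_t\vp(t_0,x_0)+H^*\vp(t_0,x_0)\}<0$. If $\vp(t_0,x_0)<g(x_0)$, then $h(t,x):=g(x)-c\bigl((t-t_0)^2+|x-x_0|^2\bigr)$ with $c$ large enough gives $h<v_n$ on $\partial B_r(t_0,x_0)$ for $n$ large, $h(t_0,x_0)=g(x_0)>u^-_{\unco}(t_0,x_0)$, and $h\leq g$ everywhere; the function $\tilde v:=v_n\vee h$ in $B_r$ and $v_n$ outside is continuous, $\leq g$, and lies in $\bU^-_{\unco}$ because on the part of $B$ where $\tilde v=h$, one already has $Y(\tau)<h(\tau,X(\tau))\leq g(X(\tau))$, so $\rho:=\tau$ works. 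Otherwise $-\partial_t\vp(t_0,x_0)+H^*\vp(t_0,x_0)<0$; the limsup definition of $H^*$ supplies $\eps_0,\eta_0,\kappa>0$ and a neighborhood in which \emph{every} $u\in\mathcal{N}_{\eps_0,\eta_0}(\cdot,\vp)$ satisfies $\mathbf{L}^u<\partial_t\vp-\kappa$. Form $\vp^\delta(t,x):=\vp(t,x)+\delta-\delta'((t-t_0)^2+|x-x_0|^2)$ with parameters tuned so that $\vp^\delta(t_0,x_0)>u^-_{\unco}(t_0,x_0)$ and $\vp^\delta<v_n$ on $\partial B_r$ (for $n$ large, by the same compactness/monotonicity argument), and set $\tilde v:=v_n\vee\vp^\delta$ in $B_r$, $v_n$ outside. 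For the sub-solution property, on the part of $B\subset\{Y(\tau)<\vp^\delta(\tau,X(\tau))\}$ where $\tilde v=\vp^\delta$, any admissible $\nu$ must at each instant fall into one of three regimes: (i) $|N^\nu|>\eps_0$, providing Brownian dispersion of $\vp^\delta(\cdot,X)-Y$; (ii) $\nu\in\mathcal{N}_{\eps_0,\eta_0}$, forcing $\mathbf{L}^\nu<\partial_t\vp^\delta-\kappa/2$ and hence strictly positive drift of $\vp^\delta(\cdot,X)-Y$; or (iii) $\Delta^{\nu,e}<\eta_0$ at some mark, producing a positive jump of $\vp^\delta(\cdot,X)-Y$ with positive probability. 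Each alternative keeps $Y<\vp^\delta(\cdot,X)$ with positive conditional probability until $X$ exits $B_r$; on $\partial B_r$ one has $\vp^\delta\leq v_n$, so $Y(\sigma)<v_n(\sigma,X(\sigma))$ and $v_n$'s sub-solution property furnishes the desired $\rho$. Since $\tilde v(t_0,x_0)>u^-_{\unco}(t_0,x_0)$, this again contradicts $u^-_{\unco}=\sup_{v\in\bU^-_{\unco}}v$.

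The principal obstacle, as in \cite{Moreau,BayraktarLi-Jump}, is the nonlocal dependence of $H_*$ and $H^*$ on the test function through the jump increments $\vp(t,x+\beta_i)-\vp(t,x)$ hidden inside $\mathcal{N}_{\eps,\eta}(\cdot,\vp)$: producing the Lipschitz Markovian selector $\hat\nu\in\mathcal{N}_{0,\eta_0/2}(\cdot,\vp)$ via Assumption~\ref{assump: regularity-super}, and then checking that concatenation under \eqref{eq:admissibility_super} preserves admissibility while It\^o's formula with the compensated jump measure retains the sign of $Y-\vp^\delta(\cdot,X)$, is the delicate step. In the super-solution half, the quantitative case analysis controlling how much a jump violating $\Delta^{\nu,e}\geq\eta_0$ can displace $\vp^\delta(\cdot,X)-Y$ against the desired direction — and the need to ensure exit from $B_r$ before too many adverse jumps accumulate, which relies on the finiteness of jumps in any bounded interval from Assumption~\ref{assump: lambda intensity kernel}(3) — is where the bookkeeping becomes the most involved.
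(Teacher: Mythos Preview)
Your outline follows the stochastic Perron template correctly for the sub-solution half and for the obstacle part of the super-solution: your Case~1 is the analogue of the paper's Step~A, constructing a strictly larger stochastic sub-solution when $u^-_{\unco}(t_0,x_0)<g(x_0)$ by lifting near the touching point and taking $\rho=\tau$ on the set where the lift is active. The paper uses $w'(t,x)=w(t,x)+(g(x_0)-\eta-w(t_0,x_0))(1-\text{dist}((t,x),(t_0,x_0))/\eps)^+$ rather than your $h=g-c\cdot(\text{parabola})$, but the mechanism is identical.

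The gap is in your Case~2, the PDE part of the super-solution. Your three regimes correctly identify \emph{why} no control can keep $Y\ge\vp^\delta(\cdot,X)$ almost surely, but the assertion ``each alternative keeps $Y<\vp^\delta(\cdot,X)$ with positive conditional probability until $X$ exits $B_r$'' is not a proof. Regime (i) (``Brownian dispersion'') and regime (iii) (``positive jump with positive probability'') do not by themselves propagate a strict inequality over a random interval against a general predictable $\nu$ that may switch between regimes arbitrarily fast; there is no pathwise monotonicity here, and you have not exhibited any super-martingale. The device your sketch is missing is a \emph{change of measure}: the paper (following Step~2 of Theorem~3.1 in \cite{BayraktarLi-Jump}) constructs a Dol\'eans-Dade exponential $L$, with Girsanov drift $\alpha(s)=-\frac{a(s)+d(s)}{|\pi(s)|^2}\pi(s)\mathbbm{1}_{A_0^c}$ on $\{|N^{\nu}|>\eps\}$ and a jump-intensity perturbation $\delta_i^e$ concentrated on $\{c_i^e\le-\eta/2\}$, engineered precisely so that $\Gamma L$ with $\Gamma=Y-\tilde\vp(\cdot,X)-\kappa$ becomes a genuine super-martingale on $[\tau,\theta]$. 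From $\Gamma(\tau)L(\tau)<0$ on $B\cap E_1$ one then extracts a non-null $F$ on which $\Gamma(\theta)L(\theta)<0$, hence $\Gamma(\theta)<0$ since $L\ge0$. Your three regimes are exactly what drive the choices of $\alpha$ and $\delta_i^e$, but they do not stand on their own. A further wrinkle specific to this paper (absent from \cite{BayraktarLi-Jump}) is that once $Y(\theta)<w(\theta,X(\theta))$ is established on $F$, one must still concatenate with the stopping time $\rho^{w,\theta,\nu}$ supplied by $w\in\bU^-_{\unco}$ to produce the single $\rho\in\mathcal T_\tau$ required by Definition~\ref{def: Stochasticsub-solution-super}; your outline stops short of this.
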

\begin{proof} 
Since the proof of the viscosity sub-solution of $u_{\unco}^+$ is as  similar to Step 1 in the proof of Theorem 3.1 in \cite{BayraktarLi-Jump}, we will only show below that $u^-_{\unco}$ is a viscosity super-solution.

\textbf{Step A:} We show in this step that $u^{-}_{\unco}(t,x)\geq g(x)$ for all $(t,x)\in\D$.  Assume, on the contrary, that for some $(t_{0},x_{0})\in\D$, there exists $\eta>0$ such that
\begin{equation}\label{eq:superhedging-interior-u-minus-step2A-contra}
2\eta=g(x_{0})-u^{-}_{\unco}(t_{0},x_{0})>0.
\end{equation}
Choose an arbitrary $w\in\bU_{\unco}^{-}$. By the definition of $\bU_{\unco}^{-}$ and lower semi-continuity of $g$, there exists $\eps>0$ such that 
\begin{equation*}
g(x)-w(t,x)>\eta, \; g(x)-g(x_{0})>-\frac{\eta}{2}, \; |w(t,x)-w(t_{0},x_{0})|\leq \frac{\eta}{2}\text{ for all } (t,x)\in\text{cl}(B_{\eps}(t_{0},x_{0})).
\end{equation*}
Define 
\begin{equation}
w'(t,x):=
\left \{ 
\begin{split}
&w(t,x)+(g(x_{0})-\eta-w(t_{0},x_{0}))\left(1-\text{dist}((t,x), (t_{0},x_{0}))/\eps\right) &\text{ for } (t,x)\in\text{cl}(B_{\eps}(t_0, x_0)),\\
&w(t,x)  & \text{ for } (t,x)\notin\text{cl}(B_{\eps}(t_0, x_0)).
\end{split}
\right.
\end{equation}
Obviously, $w'\geq w$ and $w'$ is continuous with polynomial growth. In addition,  
\begin{equation}\label{eq:superhedging-interior-u-minus-step2A-1}
\{(t,x):w(t,x)<w'(t,x)\}=B_{\eps}(t_{0},x_{0}) \quad \text{and}
\end{equation}
\begin{equation}\label{eq:superhedging-interior-u-minus-step2A-2}
w'(t,x)\leq w(t,x)+ (g(x_{0})-\eta-w(t_{0},x_{0}))< g(x_{0})-\frac{\eta}{2}<g(x) \text{ for }(t,x)\in\text{cl}(B_{\eps}(t_{0},x_{0})).
\end{equation}
The equation above, along with the fact that $w\in\bU^{-}_{\unco}$, implies that $w'(T,x)\leq g(x)$ for all $x\in\R^{d}$. Noting that $w'(t_{0},x_{0})=g(x_{0})-\eta>u_{\unco}^{-}(t_{0},x_{0})$ due to \eqref{eq:superhedging-interior-u-minus-step2A-contra}, we would obtain a contradiction if we could show $w'\in\bU^{-}_{\unco}$.  

To prove that $w'\in\bU^{-}_{\unco}$, fix $(t,x,y)\in \D_i\times\mathbb{R}$, $\tau \in \mathcal{T}_t$ and $\nu\in\mathcal{U}_{\unco}^{t}$. For $w\in\bU_{\unco}^{-}$, let $\rho^{w,\tau,\nu} \in \mathcal{T}_{\tau}$ be the ``optimal'' stopping time satisfying the second item in  Definition \ref{def: Stochasticsub-solution-super}. In order to show that $w'\in\bU^-_{\unco}$, we want to construct an ``optimal'' stopping time $\rho$ which works in the sense of Definition \ref{def: Stochasticsub-solution-super}. Let $A=\{w(\tau,X(\tau))=w'(\tau,X(\tau))\}\in\mathcal{F}^t_{\tau}$ and
\begin{equation*}
\rho=\mathbbm{1}_{A}\rho^{w,\tau,\nu}+\mathbbm{1}_{A^c}\tau.
\end{equation*}
Obviously, $\rho\in\mathcal{T}_{\tau}$. It suffices to show 
$
\mathbb{P}(Y(\rho)<g(X(\rho))|B)>0
$
for any $B\subset \{Y(\tau)<w'(\tau,X(\tau))\}$ satisfying $\mathbb{P}(B)>0$ and $B\in\mathcal{F}^t_{\tau}$. The following two scenarios together will yield the desired result. 

(i) If $\mathbb{P}(B\cap A)>0:$ We know that $B\cap A \subset \{Y(\tau)<w(\tau,X(\tau))\}$ and $B\cap A \in \mathcal{F}^t_{\tau}$. From the fact $w\in\bU^{-}_{\unco}$ and the definition of $\rho$ on $A$, it holds that
\begin{equation*}\label{eq:case1_super-solution-property-superhedging}
\mathbb{P}(Y(\rho)<g(X(\rho))|B\cap A)=\mathbb{P}(Y(\rho^{w,\tau,\nu})<g(X(\rho^{w,\tau,\nu}))|B\cap A)>0.
\end{equation*}

(ii) If $\mathbb{P}(B\cap A^c)>0$: $(\tau,X(\tau))\in B_{\eps}(t_0,x_0)$ on $A^{c}$ from \eqref{eq:superhedging-interior-u-minus-step2A-1}, which implies $w'(\tau,X(\tau))<g(X(\tau))$ from \eqref{eq:superhedging-interior-u-minus-step2A-2}. Since $\rho=\tau$ on $A^{c}$,
\begin{equation*}\label{eq:case2_super-solution-property-superhedging}
\mathbb{P}(Y(\rho)<g(X(\rho))|B\cap A^c)\geq \mathbb{P}(Y(\tau)<w'(\tau,X(\tau))|B\cap A^c) = \mathbb{P}(B\cap A^c)>0.
\end{equation*}
\textbf{Step B:} We claim that $u^{-}_{\unco}$ is a viscosity super-solution of 
\begin{equation*}
-\partial_t\vp(t,x)+ H^*\vp(t,x)\geq 0. 
\end{equation*}
The proof is similar to the proof in Step 2 of Theorem 3.1 in \cite{BayraktarLi-Jump}, but it is worth pointing out the following difference: after $w^{\kappa}$ is defined, we need to construct an optimal stopping time $\rho$ for $w^{\kappa}$ given $\tau$ and $\nu$ (as we did in Step A). In fact, it is easy to see that $\rho$ can be defined as follows:
\begin{equation*}
\rho=\mathbbm{1}_{A} \rho^{w,\tau,\nu} +\mathbbm{1}_{A^{c}} \rho^{w, \theta, \nu}, 
\end{equation*}
where $\rho^{w,\tau,\nu}$ (resp. $\rho^{w,\theta,\nu}$) is the ``optimal'' stopping time in Definition \ref{def: Stochasticsub-solution-super} for $w$ given $\tau$(resp. $\theta$) and $\nu$. Here $\theta$ is the same as that in Step 2 of Theorem 3.1. 
\end{proof}
\subsection{Boundary Conditions}\label{subsec: bdd cond}
By the definition of $u_{\unco }$, it holds that $u_{\unco}(T,x)=g(x)$ for all $x\in\R^d$. However, $u^+_{\unco}$ and $u^-_{\unco}$ may not satisfy this boundary condition. Define
\begin{equation*}
\mathbf{N}(t,x,y,p, \psi):=\{(r,s)\in\R^d\times\R: \exists u \in U, \;\text{s.t.}\; r=N^{u}(t,x,y,p)\;\text{and}\;s\leq \Delta^{u,e}(t,x,y,\psi)\; \hat{m}-\text{a.s.}\}
\end{equation*}
and 
\begin{equation}\label{eq:delta defi}
\delta:=\text{dist}(0, \mathbf{N}^c)-\text{dist}(0, \mathbf{N}),
\end{equation}
where dist denotes the Euclidean distance. It holds that
\begin{equation}\label{eq: delta>0 equi int}
0\in\text{int}(N(t,x,y,p,\psi))\;\;\text{iff}\;\;\delta(t,x,y,p,\psi)>0. 
\end{equation}
We refer the readers to \cite{Moreau} for the discussion of the boundary conditions. 

The upper (resp. lower) semi-continuous envelope of $\delta$ is denoted by $\delta^* \;(\text{resp}.\; \;\delta_*)$. Let
\begin{equation*}\label{eq: boundary_u-_def}
u^+_{\unco}(T-,x)=\limsup _{(t<T, x')\rightarrow (T,x)} u^-_{\unco}(t,x'), \;\;u^-_{\unco}(T-,x)=\liminf _{(t<T, x')\rightarrow (T,x)} u^-_{\unco}(t,x').
\end{equation*}
The following theorem is an adaptation of Theorem 4.1 in \cite{BayraktarLi-Jump}. 
\begin{theorem}\label{thm: bd_viscosity_property-superhedging}
Under Assumptions \ref{assump: lambda intensity kernel}, \ref{assump: regu_on_coeff}, \ref{assump:semisolution_not_empty_super} and \ref{assump: regularity-super},  $u^{+}_{\unco}(T-,\cdot)$ is a USC viscosity sub-solution of 
$\min\{\vp(x)-g(x), \delta_{*}\vp(x)\}\leq 0\text{ on } \R^d.$
On the other hand, under Assumptions \ref{assump: lambda intensity kernel}, \ref{assump: regu_on_coeff} and \ref{assump:semisolution_not_empty_super}, $u^{-}_{\unco}(T-,\cdot)$ is an LSC viscosity super-solution of 
$
\min\{\vp(x)-g(x), \delta^{*}\vp(x) \}\geq 0\text{ on }\R^d.
$
\end{theorem}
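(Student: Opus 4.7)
The proof parallels Theorem 4.1 of \cite{BayraktarLi-Jump} and adapts the interior arguments of Theorem \ref{thm: main theorem_interior-superhedging} to the terminal hyperplane $\{T\}\times\R^d$. Both statements are contradiction arguments: failure of the viscosity inequality at some $x_0$ is used to perturb an element of $\bU^+_{\unco}$ (respectively $\bU^-_{\unco}$) inside a small parabolic neighborhood of $(T, x_0)$ into a strictly better element of that class, contradicting the definition of $u^{\pm}_{\unco}$.

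For the sub-solution property of $u^+_{\unco}(T-,\cdot)$, take $\phi \in C^{2}(\R^{d})$ realizing a strict local maximum of $u^+_{\unco}(T-,\cdot)-\phi$ at $x_0$ with value $0$, and suppose for contradiction that $\phi(x_0) > g(x_0)$ and $\delta_*\phi(x_0) > 0$. By \eqref{eq: delta>0 equi int} and lower semi-continuity of $\delta_*$, there exist $\eta,\eps >0$ and an open neighborhood of $(T, x_0)$ on which $\mathcal{N}_{0,\eta}(\cdot,\phi)$ is non-empty; Assumption \ref{assump: regularity-super} then lifts this into a locally Lipschitz continuous feedback $\hat{\nu}$ taking values in $\mathcal{N}_{0,\eta}(\cdot,\phi)$. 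Using this $\hat{\nu}$, I construct a candidate function $w^{\kappa}$ by gluing $\phi + \kappa$ (for small $\kappa > 0$) with a member $w\in\bU^+_{\unco}$ outside a small parabolic ball around $(T, x_0)$. The first item of Definition \ref{def: Stochasticsuper-solution-super}, namely $w^{\kappa}\geq g$, follows from continuity of $g$ together with $\phi(x_0) > g(x_0)$. The dynamic item follows from It\^o's formula applied to $\Gamma(s) := Y(s) - \phi(X(s)) - \kappa$ driven by $\hat{\nu}$: the Brownian martingale part vanishes because $N^{\hat{\nu}} \equiv 0$, the drift is non-positive on the shrunken neighborhood, and each jump is bounded below by $\Delta^{\hat{\nu},e} \geq \eta > 0$. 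Hence $\Gamma \geq 0$ propagates up to any $\rho \in \mathcal{T}_{\tau}$, showing $w^{\kappa}\in\bU^+_{\unco}$, and $w^{\kappa}(T-,x_0) = \phi(x_0) + \kappa > u^+_{\unco}(T-,x_0)$ contradicts $u^+_{\unco} = \inf_{w\in\bU^+_{\unco}} w$.

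The super-solution property of $u^-_{\unco}(T-,\cdot)$ is proved by the dual contradiction. Fix $\phi$ such that $u^-_{\unco}(T-,\cdot)-\phi$ has a strict minimum of value $0$ at $x_0$, and suppose $\min\{\phi(x_0)-g(x_0),\delta^*\phi(x_0)\} < 0$. In the case $\delta^*\phi(x_0) < 0$, upper semi-continuity of $\delta^*$ gives a neighborhood of $(T, x_0)$ on which $0\notin \mathbf{N}(\cdot,\phi)$ robustly, so every admissible $\nu$ either misses the volatility matching constraint or has some jump coordinate bounded above by $-\eta$. Running the Dol\'eans-Dade exponential change-of-measure construction from Step B of the proof of Theorem \ref{thm: main theorem_interior-superhedging} on a parabolic cylinder closed at $T$ produces a positive-probability event on which $Y(\rho) < g(X(\rho))$ for some $\rho\in\mathcal{T}_{\tau}$; this exhibits a stochastic sub-solution exceeding $u^-_{\unco}(T-,x_0)$, contradicting $u^-_{\unco} = \sup_{w\in\bU^-_{\unco}} w$. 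The case $\phi(x_0) < g(x_0)$ is handled by a direct upward perturbation of a member of $\bU^-_{\unco}$ near $(T, x_0)$, exploiting that the stochastic sub-solution requirement at the terminal boundary reads only $w(T,\cdot) \leq g$.

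The main technical obstacle is the presence of a jump exactly at the terminal time $T$: the It\^o expansion carries a boundary term $\int_E \overline{J}^{\hat{\nu}(T),e}(T, X(T-), Y(T-), \phi)^\top\lambda(\{T\},de)$, whose sign must be controlled using either the uniform lower bound $\Delta^{\hat{\nu},e}\geq \eta$ (sub-solution case) or the admissibility constraint \eqref{eq:admissibility_super} combined with the feedback selection $\hat{\nu}$ (super-solution case). A secondary subtlety is that one must work consistently with the semi-continuous envelopes $u^{\pm}_{\unco}(T-,\cdot)$ instead of the pointwise values at $t=T$; this reduces to a routine stability argument for viscosity semi-solutions under relaxed semi-limits at the parabolic boundary.
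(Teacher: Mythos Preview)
Your overall strategy (stochastic Perron contradiction, gluing a perturbed test function with an element of $\bU^{\pm}_{\unco}$ near $(T,x_0)$, then running It\^o or the Dol\'eans--Dade change of measure) is the same as the paper's, which defers the details to Theorem 4.1 of \cite{BayraktarLi-Jump}. However, the sub-solution half of your sketch contains two genuine gaps.

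First, the direction of the perturbation is backwards. To contradict $u^{+}_{\unco}=\inf_{w\in\bU^{+}_{\unco}}w$ you must exhibit $w^{\kappa}\in\bU^{+}_{\unco}$ with $w^{\kappa}(t_n,x_n)<u^{+}_{\unco}(t_n,x_n)$ along some sequence $(t_n,x_n)\to(T,x_0)$, $t_n<T$. Your gluing with $\phi+\kappa$ produces $w^{\kappa}(T-,x_0)=\phi(x_0)+\kappa>u^{+}_{\unco}(T-,x_0)$, which is perfectly compatible with $u^{+}_{\unco}$ being an infimum and gives no contradiction. The correct construction glues with a $(t,x)$-extension $\tilde\phi$ of $\phi$ minus $\kappa$, i.e.\ $w^{\kappa}=(\tilde\phi-\kappa)\wedge w$ on a parabolic ball around $(T,x_0)$; then $w^{\kappa}\le\tilde\phi-\kappa$ near $(T,x_0)$ and one picks a sequence realizing the $\limsup$ defining $u^{+}_{\unco}(T-,x_0)$ to obtain the strict inequality $w^{\kappa}<u^{+}_{\unco}$.

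Second, the assertion that ``the drift is non-positive on the shrunken neighborhood'' is unjustified. The hypothesis $\delta_{*}\phi(x_0)>0$ only yields a feedback $\hat\nu\in\mathcal{N}_{0,\eta}$, which controls the diffusion ($N^{\hat\nu}=0$) and the jumps ($\Delta^{\hat\nu,e}\ge\eta$) but says nothing about $\mathbf{L}^{\hat\nu}=\mu_Y-\mu_X^{\top}D\phi-\tfrac12\text{Tr}[\sigma_X\sigma_X^{\top}D^2\phi]$. The standard remedy, used in \cite{BayraktarLi-Jump} and \cite{Moreau}, is to build the time dependence of $\tilde\phi$ so that $-\partial_t\tilde\phi$ dominates any bounded drift near $t=T$: for instance $\tilde\phi(t,x)=\phi(x)+\sqrt{T-t}+\iota|x-x_0|^{n_0}$ gives $-\partial_t\tilde\phi=1/(2\sqrt{T-t})\to+\infty$, forcing the drift of $\Gamma=Y-\tilde\phi+\kappa$ to be non-negative on a sufficiently thin strip $[T-\eps,T]\times B_{\eps}(x_0)$. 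Without this extension your It\^o argument does not close.

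Your super-solution sketch is essentially correct in spirit: once $\delta^{*}\phi(x_0)<0$ is localized, one checks that $\mathcal{N}_{\eps,-\eta}(\cdot,\tilde\phi)=\emptyset$ nearby, so the set $A_0\cap A_1$ in the change-of-measure construction of Theorem~\ref{thm: main theorem_interior-superhedging} is empty and the interior drift bound coming from $H^{*}$ is not needed. Note also that the case $\phi(x_0)<g(x_0)$ is vacuous, since Step~A of Theorem~\ref{thm: main theorem_interior-superhedging} already gives $u^{-}_{\unco}(T-,x_0)\ge g(x_0)$.
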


\section{Zero-sum Controller-Stopper Game}\label{subsec:equivalence}
\noindent In this section we show that the HJB equation associated to a stochastic controller-stopper game can be deduced from a stochastic target problem with a non-cooperative stopper. Given a bounded continuous function $g:\R^d\rightarrow\R$, we define  a stochastic controller-stopper game by
$$
 \mathbf{u}_{\unco}(t,x):=\inf_{\nu\in\mathcal{U}^t}\sup_{\rho\in\mathcal{T}_{t}}\mathbb{E}[g(X_{t,x}^{\nu}(\rho))]. 
$$
We follow the setup of Section \ref{sec:prob} with one exception:
$\mathcal{U}^{t}$ is the collection of all the $\mathbb{F}^{t}$-predictable processes in $\mathbb{L}^2(\Omega\times[0,T], \mathcal{F}\otimes\mathcal{B}[0,T], \mathbb{P}\otimes \lambda_{L}; U)$, where $U\subset \R^{d}$ and $X$ follows the SDE
\begin{equation*}
dX(s)=\mu_{X}(s,X(s),\nu(s))ds+\sigma_{X}(s,X(s),\nu(s))dW_s+\int_{E} \beta(s,X(s-),\nu(s), e)\lambda(ds,de). 
\end{equation*}
The following embedding lemma is an adaptation of a result in \cite{Bouchard_Equivalence}. 

\begin{lemma}\label{eq:equivalence_application-superhedging}
Suppose Assumptions \ref{assump: lambda intensity kernel} holds. Define
\begin{equation*}
\begin{gathered}
u_{\unco}(t,x):=\inf\{y\in\R: \exists (\nu, \alpha, \gamma)\in \mathcal{U}^t\times\mathcal{A}^t\times\Gamma^t_{\unco}\;\text{s.t.}\;Y_{t,y}^{\alpha,\gamma}(\rho)\geq g(X_{t,x}^{\nu}(\rho))\;\text{for all } \rho\in\mathcal{T}_{t}\}, \text{where} \\
Y_{t,y}^{\alpha,\gamma}(\cdot):=y+\int_t^{\cdot}\alpha^{\top}(s)dW_s+\int_t^{\cdot}\int_E\gamma^{\top}(s,e)\tilde{\lambda}(ds,de), 
\end{gathered}
\end{equation*}
and $\mathcal{A}^t$ and $\Gamma^{t}_{\unco}$ are the collections of $\R^{d}$-valued and $\mathbb{L}^{2}(E, \mathcal{E}, \hat{m}; \R^{I})$-valued processes, respectively,  satisfying the the measurability and the integrablity condition \eqref{eq:integrablility} in Section \ref{sec:prob}. 
Then $u_{\unco}=\mathbf{u}_{\unco}$ on $\D$.
\end{lemma}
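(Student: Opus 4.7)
The plan is to establish the two inequalities $u_{\unco}\geq\mathbf{u}_{\unco}$ and $u_{\unco}\leq\mathbf{u}_{\unco}$ separately. The key observation is that the dynamics of $Y_{t,y}^{\alpha,\gamma}$ carry no drift and the integrability built into $\mathcal{A}^t$ and $\Gamma^t_{\unco}$ forces it to be a genuine $\mathbb{F}^t$-martingale; combining this with the Snell envelope of $s\mapsto g(X_{t,x}^{\nu}(s))$ and a martingale representation theorem in $\mathbb{F}^t$ sets up a correspondence between super-replicating triples and $\nu$-Snell envelopes from which both inequalities drop out. This is the jump-diffusion analogue of the Bouchard--Elie--Touzi embedding.

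For the easy direction $u_{\unco}\geq\mathbf{u}_{\unco}$, I pick any $y$ admissible in the definition of $u_{\unco}(t,x)$ together with an associated triple $(\nu,\alpha,\gamma)\in\mathcal{U}^t\times\mathcal{A}^t\times\Gamma^t_{\unco}$. The $\mathbb{L}^2$ integrability of $\alpha$ and $\gamma$ together with the orthogonality of $W$ and $\tilde{\lambda}$ guarantee that $Y_{t,y}^{\alpha,\gamma}$ is a square-integrable $\mathbb{F}^t$-martingale, so the optional sampling theorem gives
\[
y=\mathbb{E}\bigl[Y_{t,y}^{\alpha,\gamma}(\rho)\bigr]\geq\mathbb{E}\bigl[g(X_{t,x}^{\nu}(\rho))\bigr]\qquad\text{for every }\rho\in\mathcal{T}_t.
\]
Maximizing in $\rho$ and minimizing in $\nu$ yields $y\geq\mathbf{u}_{\unco}(t,x)$, and taking the infimum over admissible $y$ produces the inequality.

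For the reverse inclusion, I fix $\nu\in\mathcal{U}^t$ and introduce the Snell envelope
\[
V^{\nu}(s):=\esssup_{\rho\in\mathcal{T}_s}\mathbb{E}\bigl[g(X_{t,x}^{\nu}(\rho))\mid\mathcal{F}^t_s\bigr],\qquad s\in[t,T].
\]
Boundedness and continuity of $g$ give $V^{\nu}$ a bounded c\`adl\`ag modification which is the smallest supermartingale dominating $g\circ X_{t,x}^{\nu}$; $\mathbb{P}$-triviality of $\mathcal{F}^t_t$ forces $V^{\nu}(t)=\sup_{\rho\in\mathcal{T}_t}\mathbb{E}[g(X_{t,x}^{\nu}(\rho))]$. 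The Doob--Meyer decomposition then gives $V^{\nu}=V^{\nu}(t)+M^{\nu}-A^{\nu}$ with $M^{\nu}$ a bounded (hence $\mathbb{L}^2$) martingale and $A^{\nu}$ nondecreasing predictable, both vanishing at $t$. The martingale representation theorem in $\mathbb{F}^t$, for which $(W,\tilde{\lambda})$ is a basis under Assumption \ref{assump: lambda intensity kernel}, delivers predictable integrands $(\alpha,\gamma)\in\mathcal{A}^t\times\Gamma^t_{\unco}$ with $M^{\nu}(s)=\int_t^s\alpha^{\top}(r)\,dW_r+\int_t^s\!\int_E\gamma^{\top}(r,e)\,\tilde{\lambda}(dr,de)$. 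For any $y\geq V^{\nu}(t)$ the identity $Y_{t,y}^{\alpha,\gamma}(s)=y-V^{\nu}(t)+V^{\nu}(s)+A^{\nu}(s)$ together with $A^{\nu}\geq 0$ and $V^{\nu}(s)\geq g(X_{t,x}^{\nu}(s))$ yields $Y_{t,y}^{\alpha,\gamma}(\rho)\geq g(X_{t,x}^{\nu}(\rho))$ for every $\rho\in\mathcal{T}_t$. Hence $u_{\unco}(t,x)\leq V^{\nu}(t)$, and taking the infimum over $\nu$ gives $u_{\unco}(t,x)\leq\mathbf{u}_{\unco}(t,x)$.

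The main technical obstacle I anticipate is verifying that the integrand pair $(\alpha,\gamma)$ produced by the representation theorem really lies in the admissibility classes of Section \ref{sec:prob}, i.e., that $\alpha$ is $\mathbb{F}^t$-predictable and in $\mathbb{L}^2(\Omega\times[t,T];\mathbb{R}^d)$ while $\gamma$ is $\mathcal{P}^t\otimes\mathcal{E}$-measurable and satisfies the integrability condition \eqref{eq:integrablility}. Boundedness of $g$ makes $M^{\nu}$ a bounded martingale with $\mathbb{E}\langle M^{\nu}\rangle_T<\infty$, which yields $\alpha\in\mathbb{L}^2$ and $\gamma\in\mathbb{L}^2(\Omega\times[t,T]\times E,\mathbb{P}\otimes\lambda_L\otimes\hat{m})$; predictability follows from the standard construction of the representation under Assumption \ref{assump: lambda intensity kernel}. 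A secondary subtlety is aggregating the almost-sure inequality $V^{\nu}(\rho)\geq g(X_{t,x}^{\nu}(\rho))$ across the uncountable family $\mathcal{T}_t$, which is handled using the c\`adl\`ag regularity of both $V^{\nu}$ and $g\circ X_{t,x}^{\nu}$.
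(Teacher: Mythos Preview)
Your overall structure matches the paper's: Snell envelope, Doob--Meyer decomposition, and martingale representation are exactly what the paper uses for $u_{\unco}\leq\mathbf{u}_{\unco}$, and your direct optional-sampling argument for $u_{\unco}\geq\mathbf{u}_{\unco}$ is a clean shortcut for the paper's appeal to the abstract Lemma~\ref{lem:stochastic_target_representation_superhedging}.

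There is, however, a genuine gap. Despite the loose wording in the statement, the class $\Gamma^t_{\unco}$ (note the subscript) carries more than the $\mathbb{L}^2$-predictability conditions you verify: it also imposes the jump-admissibility constraint \eqref{eq:admissibility_super}, which in the present setup asks that for every $\tau\in\mathcal{T}_t$ the quantity $\int_E\gamma^{\top}(\tau,e)\,\lambda(\{\tau\},de)$ be bounded below by a nonrandom constant depending on $\tau$. This is the main technical content of the paper's argument, and your ``main technical obstacle'' paragraph checks only predictability and square-integrability, never this jump bound. The paper closes the gap by contradiction: if the bound failed at some $\tau_0$ with level $K>2\|g\|_{\infty}$, then $M^{\nu_0}$ would jump down by more than $K$ at $\tau_0$ with positive probability; since $C^{\nu_0}$ is nondecreasing, the Snell envelope $A^{\nu_0}=M^{\nu_0}-C^{\nu_0}$ would jump down by at least $K$ as well, contradicting $|A^{\nu_0}|\leq\|g\|_{\infty}$. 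Without this step your representation only places $\gamma$ in the larger class $\Gamma^t_0$, and the inequality $u_{\unco}(t,x)\leq V^{\nu}(t)$ is not yet established.
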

\begin{proof}
For fixed $\nu\in\mathcal{U}^{t}$, let 
\begin{equation*}
A^{\nu}(s):=\esssup_{\tau\in\mathcal{T}_s}\mathbb{E}[g(X^{\nu}_{t,x}(\tau))|\mathcal{F}_s], s \geq t.
\end{equation*}
Then $A^{\nu}$ is the Snell envelope (starting at $t$) of $g(X_{t,x}^{\nu})$ and thus a super-martingale. Moreover, 
\begin{equation*}
\esssup_{\tau\in\mathcal{T}_t}\mathbb{E}[g(X^{\nu}_{t,x}(\tau))|\mathcal{F}_t] + A^{\nu}(\rho)-A^{\nu}(t)\geq g(X^{\nu}_{t,x}(\rho))\text{ for all }\rho\in\mathcal{T}_{t}.
\end{equation*}
By Doob-Meyer Decomposition Theorem, $A^{\nu}_{s}=M^{\nu}_{s}-C^{\nu}_{s}$ for $s\in[t,T]$, where $M^{\nu}$ is a martingale on $[t,T]$ and $C^{\nu}$ is an increasing adapted process with $C^{\nu}_{t}=0$. Therefore, 
\begin{equation*}
\esssup_{\tau\in\mathcal{T}_t}\mathbb{E}[g(X^{\nu}_{t,x}(\tau))|\mathcal{F}_t] + M^{\nu}(\rho)-M^{\nu}(t)\geq g(X^{\nu}_{t,x}(\rho))\text{ for all }\rho\in\mathcal{T}_{t}.
\end{equation*}
Denote $\mathcal{M}_{\unco}=\{M^{\nu}: \nu\in\Uc^{t}\}$. In view of Lemma \ref{lem:stochastic_target_representation_superhedging},  it suffices to check that
\begin{equation}\label{eq:inclusion_equi}
\mathcal{M}_{\unco}\subset
\mathcal{M}:=\left\{Y_{t,y}^{\alpha,\gamma}(\cdot): y\in\R, \alpha\in\mathcal{A}^{t}, \gamma\in\Gamma^{t}_{\unco} \right\}.
\end{equation}
In fact, by the martingale representation theorem (see e.g. Theorem 14.5.7 in \cite{MR3443368}), for  any $\nu\in\mathcal{U}^{t}$, $M^{\nu}$ can be represented in the form of $Y_{t,y}^{\alpha, \gamma}$ for some $\alpha \in\mathcal{A}^{t}$ and $\gamma\in\Gamma^{t}_{0}$, where  $\Gamma_{0}^{t}$ is the collection of $\mathbb{L}^{2}(E, \mathcal{E}, \hat{m}; \R^{I})$-valued processes satisfying all of the admissibility conditions except for $\eqref{eq:admissibility_super}$. We now prove that $\Gamma_{0}^{t}$ in the claim above can be actually replaced by $\Gamma^{t}_{\unco}$. Assume, contrary to \eqref{eq:inclusion_equi}, that there exists $\nu_{0}\in\mathcal{U}^{t}$ such that
\begin{equation*}
\mathcal{M}^{\nu_{0}}(\cdot)=y+\int_t^{\cdot}\alpha_{0}^{\top}(s)dW_s+\int_t^{\cdot}\int_E\gamma_{0}^{\top}(s,e)\tilde{\lambda}(ds,de)
\end{equation*}
for some $y\in\mathbb{R}$,  $\alpha_{0}\in\mathcal{A}^{t}$ and $\gamma_{0}\in\Gamma^{t}_{0}$, but  \eqref{eq:admissibility_super} does not hold.  This means that for $K>2\|g\|_{\infty}$, there exists $\tau_{0}\in\mathcal{T}_{t}$ such that 
$\mathbb{P}\left(\int_{E}\gamma_{0}^{\top}(\tau_{0},e) \lambda(\{\tau_{0}\},de)\leq -K\right)>0. $
Therefore, $$M^{\nu_{0}}(\tau_{0})-M^{\nu_{0}}(\tau_{0}-) = \int_{E}\gamma_{0}^{\top}(\tau_{0},e) \lambda(\{\tau_{0}\},de)\leq -K\;\;\text{with positive probability},$$
which further implies that
$$
A^{\nu_{0}}(\tau_{0})-A^{\nu_{0}}(\tau_{0}-)\leq -K \;\;\text{with positive probability}.
$$
This contradicts the fact that $A^{\nu_{0}}$ is (strictly) bounded by $\frac{K}{2}$. 
\end{proof}
Let $\mathbf{H}^*$ be the USC envelope of the LSC map $\mathbf{H}:\D\times\R^d\times\mathbb{M}^d\times C(\D) \rightarrow \R$ defined by
\begin{equation*}
\begin{array}{c}
\mathbf{H}: (t,x,p,A, \vp) \rightarrow \sup_{u\in U}\{-I[\vp](t,x,u)-\mu_X^{\top}(t,x,u)p-\frac{1}{2}\text{Tr}[\sigma_X\sigma_X^{\top}(t,x,u)A]\},\;\text{where} \\
I[\vp](t,x,u)=\sum_{1\leq i\leq I}\int_E \left( \vp(t,x+\beta_i(t,x,u,e))-\vp(t,x)\right)m_i(de).
\end{array}
\end{equation*}
\begin{theorem}\label{thm: optimal control-superhedging}
Under Assumptions \ref{assump: lambda intensity kernel} and \ref{assump: regu_on_coeff}, $u^+_{\unco}$  is a USC viscosity sub-solution of 
$$
\min\{\vp(t,x)-g(x), -\partial_t\vp(t,x)+\mathbf{H}\vp(t,x)\}\leq 0\text{ on } \Di
$$
and $u^+_{\unco}(T-,x)\leq g(x)$ for all $x\in \R^d$. On the other hand, $u^-_{\unco}$  is an LSC viscosity super-solution of 
$$
\min\{\vp(t,x)-g(x), -\partial_t\vp(t,x)+\mathbf{H}^{*}\vp(t,x)\}\geq 0\text{ on } \Di
$$
and $u^-_{\unco}(T-,\cdot)\geq g(x)$ for all $x\in\R^{d}$.
\end{theorem}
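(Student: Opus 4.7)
The plan is to reduce the controller--stopper game to the stochastic target framework of Section~\ref{sec:superhedging} via Lemma~\ref{eq:equivalence_application-superhedging}, then invoke Theorems~\ref{thm: main theorem_interior-superhedging} and~\ref{thm: bd_viscosity_property-superhedging}. The lemma identifies $\mathbf{u}_{\unco}$ with $u_{\unco}$ for the enlarged control $u = (\nu,\alpha,\gamma) \in U_1 \times \R^d \times \mathbb{L}^2(E,\mathcal{E},\hat m;\R^I)$ driving the martingale $dY = \alpha^\top dW + \int_E \gamma^\top \tilde\lambda(ds,de)$. Rewritten in the form~\eqref{eq: SDEs} this gives $\sigma_Y = \alpha$, $b = \gamma$, and $\mu_Y = -\sum_i \int_E \gamma_i(e)\, m_i(de)$, so Assumption~\ref{assump: regu_on_coeff} is inherited from the original data. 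The work then consists of (i) verifying Assumptions~\ref{assump:semisolution_not_empty_super} and~\ref{assump: regularity-super} in this embedding, (ii) computing the relaxed Hamiltonians and identifying them with $\mathbf{H}^{*}$ and $\mathbf{H}$, and (iii) showing the boundary operators in Theorem~\ref{thm: bd_viscosity_property-superhedging} collapse to $\vp - g$.

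For step (i), I will appeal to the Appendix to produce elements of $\bU^{+}_{\unco}$ and $\bU^{-}_{\unco}$; the boundedness of $g$ makes affine-in-time perturbations of $\pm\|g\|_{\infty}$ natural candidates. For Assumption~\ref{assump: regularity-super}, I will observe that $\mathcal{N}_{0,\eta}(t,x,y,p,\psi)$ in the embedding is the affine set $\{(\nu,\alpha,\gamma):\ \alpha = \sigma_X^\top(t,x,\nu)p,\ \gamma_i(e) \geq \psi(t,x+\beta_i(t,x,\nu,e)) - \psi(t,x) + \eta\}$; given a selection $u_0 = (\nu_0,\alpha_0,\gamma_0)$ the map $(t,x,y,p) \mapsto (\nu_0,\sigma_X^\top(t,x,\nu_0)p,\gamma_0)$ is a locally Lipschitz selection which agrees with $u_0$ up to an $\eps$-adjustment in the $\alpha$-component.

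For step (ii), the key observation is that $\mathbf{L}^u$ does not depend on $\alpha$, so the constraint $|N^u|\leq\eps$ is inactive in the supremum; for fixed $\nu$ the optimal $\gamma$ is the pointwise minimizer $\gamma_i(e) = \psi(t,x+\beta_i(t,x,\nu,e)) - \psi(t,x) + \eta$, which lies in $\mathbb{L}^2(E,\mathcal{E},\hat m;\R^I)$ by Assumption~\ref{assump: lambda intensity kernel}(1) together with the $e$-boundedness of $\beta$ in Assumption~\ref{assump: regu_on_coeff}(3). Substituting yields
\begin{equation*}
H_{\eps,\eta}(\Theta,\psi) = \mathbf{H}(\Theta,\psi) - \eta\,\hat m(E),
\end{equation*}
and taking the envelopes in~\eqref{eq: HJB operators-super} gives $H^{*} = \mathbf{H}^{*}$ and $H_{*} = \mathbf{H}$, the latter because $\mathbf{H}$ is LSC as a supremum of continuous integrands. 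Theorem~\ref{thm: main theorem_interior-superhedging} then delivers the interior viscosity properties stated in the theorem.

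Finally, for step (iii), since $\alpha$ is unrestricted and each $\gamma_i(e)$ can be chosen arbitrarily large, the reachability set satisfies $\mathbf{N}(t,x,y,p,\psi) = \R^d\times\R$ identically; hence $\delta \equiv +\infty$ and $\delta^{*}, \delta_{*}$ are strictly positive. The boundary operator in Theorem~\ref{thm: bd_viscosity_property-superhedging} therefore reduces to $\vp - g$, which, tested against constants, yields $u^{+}_{\unco}(T-,x)\leq g(x)$ and $u^{-}_{\unco}(T-,x)\geq g(x)$ pointwise. The main obstacle I anticipate is step (ii): the pointwise minimization in $\gamma$ looks formal, but compatibility of this choice with the admissibility condition~\eqref{eq:admissibility_super} (implicit in the Perron iteration underlying Theorem~\ref{thm: main theorem_interior-superhedging}) has to be checked carefully on compact sets, using $\hat m(E)<\infty$ and the uniform $e$-control on $b$ and $\beta$ to produce genuine predictable selectors with the required integrability.
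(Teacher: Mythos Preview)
Your proposal is correct and follows essentially the same architecture as the paper: verify Assumptions~\ref{assump:semisolution_not_empty_super} and~\ref{assump: regularity-super} for the embedded problem, invoke Theorems~\ref{thm: main theorem_interior-superhedging} and~\ref{thm: bd_viscosity_property-superhedging}, and observe that $\mathbf{N}=\R^d\times\R$ forces $\delta\equiv+\infty$ so the boundary operator reduces to $\vp-g$.

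The one substantive difference is in step~(ii). The paper does not compute $H_{\eps,\eta}$ explicitly; instead it cites Proposition~3.3 of \cite{Bouchard_Equivalence} for the inequalities $H^{*}\le\mathbf{H}^{*}$ and $H_{*}\ge\mathbf{H}$, which are all that is needed to transfer the sub- and super-solution properties from $H_{*},H^{*}$ to $\mathbf{H},\mathbf{H}^{*}$. Your direct computation $H_{\eps,\eta}(\Theta,\psi)=\mathbf{H}(\Theta,\psi)-\eta\,\hat m(E)$ is sharper (it gives equality after taking envelopes) and self-contained, at the cost of having to justify that the pointwise minimizer $\gamma_i(e)=\psi(t,x+\beta_i(t,x,\nu,e))-\psi(t,x)+\eta$ lies in $\mathbb{L}^2(E,\hat m;\R^I)$---which, as you note, follows from $\hat m(E)<\infty$ and the uniform-in-$e$ bound on $\beta$. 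Either route works; yours is more explicit, the paper's is shorter by outsourcing the inequality.

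Two minor remarks. First, your proposed Lipschitz selector for Assumption~\ref{assump: regularity-super} keeps $\gamma_0$ fixed; strictly speaking the jump constraint $\gamma_{0,i}(e)\ge\psi(t,x+\beta_i(t,x,\nu_0,e))-\psi(t,x)+\eta$ can fail slightly off the base point, so one should shift $\gamma_0\mapsto\gamma_0+c$ for a small constant $c$ (this costs only $c\sqrt{I\hat m(E)}\le\eps$ in $\|\cdot\|_U$ and restores the inequality on a neighborhood, by uniform-in-$e$ continuity of $\beta$). Second, your closing worry about admissibility~\eqref{eq:admissibility_super} is unnecessary here: the operators $H_{\eps,\eta}$, $H^{*}$, $H_{*}$ are defined via the pointwise constraint sets $\mathcal{N}_{\eps,\eta}(t,x,y,p,\psi)\subset U$, not via predictable processes, so the admissibility condition on controls plays no role in the algebra of step~(ii).
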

\begin{proof}
It is easy to check Assumption \ref{assump: regularity-super} for the stochastic target problem. Since $g$ is bounded, we can check that all of the assumptions in the Appendix A are satisfied, which implies that Assumption \ref{assump:semisolution_not_empty_super} holds. From Theorem \ref{thm: main theorem_interior-superhedging}, 
$u^+_{\unco}$ is a USC viscosity sub-solution of 
$$
\min\{\vp(t,x)-g(x), -\partial_t\vp(t,x)+H_{*}\vp(t,x)\}\leq 0\text{ on } \Di
$$
and $u^-_{\unco}$ is an LSC viscosity super-solution of 
$$
\min\{\vp(t,x)-g(x), -\partial_t\vp(t,x)+H^{*}\vp(t,x)\}\geq 0\text{ on } \Di
$$
From Proposition 3.3 in \cite{Bouchard_Equivalence}, 
$H^*\leq \mathbf{H}^*$ and $H_*\geq \mathbf{H}$. This implies that the viscosity properties in the parabolic interior hold. From the definition of $\delta$ in \eqref{eq:delta defi}, we know that
\begin{equation*}
\begin{array}{ll}
\mathbf{N}(t,x,y,p,\vp)=&\{(q,s)\in\R^d\times\R: \exists (u,a,r)\in U\times\R^d\times\L^2(E,\mathcal{E}, \hat{m};\R^I)\;\text{s.t.}
\;q=a-\sigma_X^{\top}(t,x,u)p \\ &\text{and }s\leq \min_{1\leq i\leq I}\{r_i(e)-\vp(t,x+\beta_i(t,x,u,e))+\vp(t,x)\} \; \hat{m}-\text{a.s.}\; e\in E\; \}.
\end{array}
\end{equation*}
Obviously, $\mathbf{N}=\R^d\times\R$. Therefore, $\delta=\infty$ and the boundary conditions hold. 
\end{proof}
\noindent The following two corollaries show that $\mathbf{u}_{\unco}$ is the unique viscosity solution to its associated HJB equation. We omit the proofs as the proofs are relatively simple given the above result.
\begin{corollary}\label{coro2-superhedging}
Suppose that Assumptions \ref{assump: lambda intensity kernel} and \ref{assump: regu_on_coeff} hold,  $\mathbf{H}=\mathbf{H}^*$ on $\{\mathbf{H}<\infty\}$ and there exists an LSC function $\mathbf{G}:\D\times\R\times\R^d\times\mathbb{M}^d\times C(\D)\cap \{\mathbf{H}<\infty\}\rightarrow\R $  such that 
\begin{equation*}
\begin{array}{c}
(a)\; \mathbf{H}(t,x,y,p,M,\vp)<\infty \implies \mathbf{G}(t,x,y,p,M,\vp)\leq 0, \\
(b)\;\mathbf{G}(t,x,y,p,M,\vp)<0\implies \mathbf{H}(t,x,y,p,M,\vp)<\infty.
\end{array}
\end{equation*} 
Then $u^+_{\unco}$ $(\text{resp. } u^-_{\unco})$ is a USC $($resp. an LSC$)$ viscosity sub-solution $($resp. super-solution$)$ of
\begin{equation*}
\min\{\vp(t,x)-g(x), \max\{-\partial_t\vp(t,x)+\mathbf{H}\vp(t,x), \mathbf{G}\vp(t,x)\}\}=0\;\;\text{on}\;\;\D_{i}.
\end{equation*}  
\end{corollary}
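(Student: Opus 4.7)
The plan is to derive both viscosity inequalities directly from Theorem~\ref{thm: optimal control-superhedging} by a simple case analysis, using the two compatibility conditions (a) and (b) between $\mathbf{H}$ and $\mathbf{G}$ to reformulate the interior HJB equations in the $\max$-form. No new PDE argument is required; everything reduces to manipulating the pointwise relations at a test point.

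For the sub-solution property, I would take $\vp\in C^{1,2}(\D)$ and $(t_0,x_0)\in\D_i$ such that $u^+_{\unco}-\vp$ attains a local maximum at $(t_0,x_0)$ with $u^+_{\unco}(t_0,x_0)=\vp(t_0,x_0)$. By Theorem~\ref{thm: optimal control-superhedging}, either $\vp(t_0,x_0)-g(x_0)\leq 0$ (in which case $\min\{\vp-g,\cdot\}\leq 0$ is trivially satisfied), or $-\partial_t\vp(t_0,x_0)+\mathbf{H}\vp(t_0,x_0)\leq 0$. In the latter case we in particular have $\mathbf{H}\vp(t_0,x_0)<\infty$, so hypothesis (a) yields $\mathbf{G}\vp(t_0,x_0)\leq 0$, and therefore $\max\{-\partial_t\vp+\mathbf{H}\vp,\mathbf{G}\vp\}(t_0,x_0)\leq 0$. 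Either way, the $\min$-inequality holds.

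For the super-solution property, I would take $\vp$ and $(t_0,x_0)$ such that $u^-_{\unco}-\vp$ attains a local minimum at $(t_0,x_0)$ with $u^-_{\unco}(t_0,x_0)=\vp(t_0,x_0)$. Theorem~\ref{thm: optimal control-superhedging} gives $\vp(t_0,x_0)-g(x_0)\geq 0$ and $-\partial_t\vp(t_0,x_0)+\mathbf{H}^*\vp(t_0,x_0)\geq 0$, so it remains to show $\max\{-\partial_t\vp+\mathbf{H}\vp,\mathbf{G}\vp\}(t_0,x_0)\geq 0$. I would split into two cases. If $\mathbf{H}\vp(t_0,x_0)<\infty$, then the standing assumption $\mathbf{H}=\mathbf{H}^*$ on $\{\mathbf{H}<\infty\}$ gives $\mathbf{H}\vp(t_0,x_0)=\mathbf{H}^*\vp(t_0,x_0)$, so $-\partial_t\vp+\mathbf{H}\vp\geq 0$ and the maximum is nonnegative. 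If on the other hand $\mathbf{H}\vp(t_0,x_0)=\infty$, then the contrapositive of hypothesis (b) yields $\mathbf{G}\vp(t_0,x_0)\geq 0$, so the maximum is again nonnegative.

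There is no real obstacle here; the content of the corollary is essentially a dictionary between the singular formulation $-\partial_t\vp+\mathbf{H}\vp$ (which may be $+\infty$) and the finite-valued reformulation $\max\{-\partial_t\vp+\mathbf{H}\vp,\mathbf{G}\vp\}$. The only minor point to mention is that the equality sign in the statement is consistent: the $\min$ is $\leq 0$ in the sub-solution sense and $\geq 0$ in the super-solution sense, which together are what is meant by ``$=0$'' in viscosity sense.
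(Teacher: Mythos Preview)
Your proposal is correct and is precisely the argument the paper has in mind; the authors explicitly omit the proof, noting it is ``relatively simple given the above result'' (Theorem~\ref{thm: optimal control-superhedging}), and your case analysis using conditions (a) and (b) is exactly that simple deduction.
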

\begin{remark}\label{Remark:existence of G}
Consider that $b = \beta = 0$ and the state process $X$ follows the set-up in \cite{Bayraktar_and_Sirbu_SP_HJBEqn}. 
In the case of one-dimensional utility maximization, where $H(t,x,p,M) = p/2M^{2}$, one can see that $H(t,x,\cdot)$ is continuous and finite in $\R\times \R$ but not at $(0,0)$. Then we can easily check that $G= - e^{-H}$ satisfies all the properties in Corollary 4.1. 
\end{remark}

\begin{corollary}
Suppose that all the assumptions in Corollary~\ref{coro2-superhedging} hold. Then 
$u^{+}_{\unco}(T-,x)=u^{-}_{\unco}(T-,x)=g(x)$. 
Moreover, if the comparison principle holds for 
\begin{equation*}
\min\{\vp(t,x)-g(x), \max\{-\partial_t\vp(t,x)+\mathbf{H}\vp(t,x), \mathbf{G}\vp(t,x)\}\}=0\;\;\text{on}\;\;\D_{i},
\end{equation*}
then $u_{\unco}$$(=\mathbf{u}_{\unco})$ is the unique continuous viscosity solution with $u_{\unco}(T,x)=g(x)$.
\end{corollary}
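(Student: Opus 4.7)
The plan is to assemble the two pieces from the prior results into a sandwich argument. For the boundary equality, I would first invoke Theorem~\ref{thm: optimal control-superhedging} to extract the one-sided inequalities $u^{+}_{\unco}(T-,x)\leq g(x)$ and $u^{-}_{\unco}(T-,x)\geq g(x)$. Combining these with the chain $u^{-}_{\unco}\leq u_{\unco}\leq u^{+}_{\unco}$ from \eqref{eq:intfvmavp-superhedging} (which in particular gives $u^{-}_{\unco}(T-,x)\leq u^{+}_{\unco}(T-,x)$ by taking limsup/liminf) produces the squeeze
\[
g(x)\leq u^{-}_{\unco}(T-,x)\leq u^{+}_{\unco}(T-,x)\leq g(x),
\]
and so every term equals $g(x)$.

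For the uniqueness and continuity claim, I would next apply Corollary~\ref{coro2-superhedging}: $u^{+}_{\unco}$ is a USC viscosity sub-solution and $u^{-}_{\unco}$ is an LSC viscosity super-solution of
\[
\min\{\vp(t,x)-g(x),\max\{-\partial_t\vp(t,x)+\mathbf{H}\vp(t,x),\mathbf{G}\vp(t,x)\}\}=0 \text{ on }\D_{i}.
\]
Together with the boundary identity just established, the hypothesized comparison principle (applied on $\D_{i}$ with terminal data $g$ at $T-$) yields $u^{+}_{\unco}\leq u^{-}_{\unco}$ on $\D$. Combined again with \eqref{eq:intfvmavp-superhedging} and Lemma~\ref{eq:equivalence_application-superhedging} (which identifies $u_{\unco}$ with $\mathbf{u}_{\unco}$), this forces
\[
u^{+}_{\unco}=u^{-}_{\unco}=u_{\unco}=\mathbf{u}_{\unco}
\]
on all of $\D$. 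Since $u^{+}_{\unco}$ is USC and $u^{-}_{\unco}$ is LSC and they coincide, $u_{\unco}$ is continuous, and it satisfies $u_{\unco}(T,x)=g(x)$ by construction of the target problem (and the boundary equality above). Uniqueness among continuous viscosity solutions with terminal data $g$ is then a direct consequence of the comparison principle applied in both directions.

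The only delicate point is ensuring the comparison principle, as stated, accommodates the replacement of the pointwise terminal condition $\vp(T,x)=g(x)$ by the semi-continuous envelopes at $T-$; this is standard once $u^{\pm}_{\unco}(T-,\cdot)=g$ is proved, but it is the step where one must be careful to match the hypothesis of the comparison theorem to the semi-limit version of the boundary data available from Theorem~\ref{thm: bd_viscosity_property-superhedging}. Everything else is an assembly of earlier lemmas and corollaries, so no new technical input is needed beyond the comparison principle already assumed in the statement.
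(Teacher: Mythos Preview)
Your argument is correct and is exactly the standard sandwich argument the paper has in mind; indeed, the paper omits this proof entirely, noting that it is ``relatively simple given the above result.'' The squeeze for the terminal values via Theorem~\ref{thm: optimal control-superhedging} together with \eqref{eq:intfvmavp-superhedging}, followed by Corollary~\ref{coro2-superhedging} and the assumed comparison principle to collapse $u^{+}_{\unco}=u^{-}_{\unco}=u_{\unco}$, is precisely the intended route.
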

\begin{remark}
As for the assumptions needed for the comparison principle to hold, we refer the readers to Theorem 4.1 in \cite{Pham1998} for a similar comparison principle result. Thus, we get an example in which the comparison principle holds (up to slight modification). A more general result for controlled jumps is provided in \cite{Barles2008}.
\end{remark}

\section{Analysis of $u_{\co}$ defined in \eqref{eq: value_function_sub}}\label{sec:subhedging}
In the section, using stochastic Perron's method we prove that an appropriate upper bound of $u_{\co}$ 
 is a viscosity sub-solution of
\begin{equation}\label{eq: sub_HJB equation_interior-subhedging}
\min\{\vp(t,x)-g(x), -\partial_t\vp(t,x)+ F_*\vp(t,x)\}\leq 0 \;\;\text{in}\;\;\Di.
 \end{equation} 
and an appropriate lower bound is a viscosity super-solution of 
 \begin{equation}\label{eq: super_HJB equation_interior-subhedging}
\min\{\vp(t,x)-g(x), -\partial_t\vp(t,x)+ F^*\vp(t,x)\}\geq 0 \;\;\text{in}\;\;\Di
\end{equation}
 The boundary conditions will be deferred to Theorem \ref{thm: bd_viscosity_property_subhedging}. In order to construct the aforementioned upper and lower envelopes we will introduce two classes of functions next.
\begin{definition} [Stochastic super-solutions] \label{def: Stochasticsuper-solution_subhedging}
A continuous function $w: \D \rightarrow \mathbb{R}$ is called a stochastic super-solution of \eqref{eq: super_HJB equation_interior-subhedging} if
\begin{enumerate}
\item $w(t, x)\geq g(x)$ and for some $C>0$ and $n\in\N$, $|w(t,x)|\leq C(1+|x|^{n})$ for all $(t,x)\in \D$.
\item Given $(t,x,y)\in \D\times\mathbb{R}$, for any $\tau\in\mathcal{T}_t$, $\rho \in \mathcal{T}_{\tau}$ and $\nu\in \Uc_{\co}^{t}$,   we have 
$$\mathbb{P}(Y(\rho )>w(\rho, X(\rho))|B)>0$$
for any $B\subset \{Y(\tau)>w(\tau,X(\tau))\}$ satisfying $B\in\mathcal{F}_\tau^t$ and $\mathbb{P}(B)>0$. Here, $X:= X_{t,x}^{\nu}$ and $ Y:=Y_{t,x,y}^{\nu}$.
\end{enumerate}
\end{definition}
\begin{definition} [Stochastic sub-solutions] \label{def: Stochasticsub-solution_subhedging}
A continuous function $w: \D \rightarrow \mathbb{R}$ is called a stochastic sub-solution of \eqref{eq: sub_HJB equation_interior-subhedging} if
\begin{enumerate}
\item  $w(T, x)\leq g(x)$ for all $x\in\mathbb{R}^d$ and for some $C>0$ and $n\in\N$, $|w(t,x)|\leq C(1+|x|^{n})$ for all $(t,x)\in \D$.
\item Given$(t,x,y)\in \D\times\mathbb{R}$, for any $\tau\in\mathcal{T}_t$ and $\nu\in \Uc_{\co}^t$, there exist $\rho \in \mathcal{T}_{\tau}$ and $\tilde{\nu}\in\mathcal{U}_{\co}^t$ such that
$$Y(\rho )\leq g(X(\rho)) \text{ on } \{Y(\tau)\leq w(\tau,X(\tau))\},$$
where $X:= X_{t,x}^{\nu\otimes_{\tau}\tilde{\nu}}$ and $ Y:=Y_{t,x,y}^{\nu\otimes_{\tau}\tilde{\nu}}$.
\end{enumerate}
\end{definition}
Denote the sets of stochastic super-solutions and sub-solutions by $\bU^+_{\co}$ and $\bU^-_{\co}$, respectively.
\begin{assumption}\label{assump:semisolution_not_empty_sub}
$\bU^+_{\co}$ and $\bU^-_{\co}$ are not empty. 
\end{assumption}
Sufficient conditions for the above assumption are given in Appendix A. These conditions will be useful once we analyze the cooperative controller-stopper problem. 
We are ready to define the aforementioned envelopes.
\begin{definition}
Let $u^+_{\co}:=\inf_{w \in \mathbb{U}^{+}_{\co}} w$ and $u^-_{\co}:=\sup_{w \in \mathbb{U}^{-}_{\co}} w$.
\end{definition}

\begin{remark}
\begin{itemize}
\item  For $w\in\bU^{+}_{\co}$, choose $\tau=t$. Then for any $\nu\in\Uc^{t}_{\co}$ and $\rho\in\mathcal{T}_{t}$, it holds that
$\P(Y(\rho) >g\left(X(\rho)\right))>\P(Y(\rho)>w(\rho, X(\rho)))>0 
 \text{ if }  y>w(t,x).$
Hence, $y\geq w(t,x)$ implies that $y\geq u_{\co}(t,x)$ from \eqref{eq: value_function_sub}. This means that $w\geq u_{\co}$ and $u^+_{\co}\geq u_{\co}$. By the definition of $\bU^{+}_{\co}$, we know that $u^+_{\co}(t,x)\geq g(x)$ for all $(t,x)\in\D$.
\item For $w\in\bU^{-}_{\co}$, if $y\leq w(t,x)$,  by choosing $\tau=t$, we get that there exist $\tilde{\nu}\in\bU^{t}_{\co}$ and $\rho\in\mathcal{T}_{t}$ such that
$Y_{t,x,y}^{\tilde{\nu}}(\rho)\leq g(X_{t,x}^{\tilde\nu}(\rho))\; \P\as.$
Therefore, from \eqref{eq: value_function_sub}, $y<w(t,x)$ implies that $y\leq u(t,x)$. This means that $w\leq u_{\co}$ and $u^-_{\co}\leq u_{\co}$.  By the definition of $\bU^{-}_{\co}$, it holds that $u^{-}_{\co}(T,x)\leq g(x)$ for all $x\in\R^d$.
\end{itemize}
\end{remark}
In short,
\begin{equation}\label{eq:intfvmavp-subhedging}
u^-_{\co} = \sup _{w\in \mathbb{U}^-_{\co}} w\leq  u_{\co} \leq \inf _{w\in \mathbb{U}^+_{\co}}w= u_{\co}^+.
\end{equation}

\subsection{Viscosity Property in $\Di$}
Before we state the main results, we need the following assumption which is crucial to the super-solution property of $u^{+}_{\co}$.
\begin{assumption}\label{assump: regularity_sub}
For $\psi\in C(\D)$, $\eta>0$, let $B$ be a subset of $\D\times\R\times\R^d$ such that $\mathcal{M}_{0,-\eta}(\cdot, \psi)\neq\emptyset$ on $B$. Then for every $\eps>0$, $(t_0,x_0,y_0,p_0)\in Int(B)$ and $u_0\in\mathcal{M}_{0, -\eta}(t_0,x_0,y_0,p_0,\psi) $, there exists an open neighborhood $B'$ of $(t_0,x_0,y_0,p_0)$ and a locally Lipschitz continuous map $\hat{\nu}$ defined on $B'$ such that $\|\hat{\nu}(t_0,x_0,y_0,p_0)-u_0\|_{U}\leq \eps$ and $\hat{\nu}(t,x,y,p)\in \mathcal{M}_{0, -\eta}(t,x,y,p, \psi)$.
\end{assumption}  
As before we have the following two results whose proofs will be omitted.
\begin{lemma}
$\bU^{+}_{\co}$ and $\bU^{-}_{\co}$ are closed under pairwise minimization and maximization, respectively. 
\end{lemma}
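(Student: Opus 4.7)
The plan is to verify both claims directly from the definitions by splitting the sample space into the $\mathcal{F}^t_\tau$-measurable regions where one of the two functions dominates the other, and then applying the defining property of the dominating function on each region. Continuity, polynomial growth, and the terminal inequalities are obvious in both cases, so only the stochastic property in the second item of each definition requires work.

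For the super-solution claim, set $w := w_1 \wedge w_2$. Fix $(t,x,y)\in\D\times\R$, $\tau\in\mathcal{T}_t$, $\rho\in\mathcal{T}_\tau$, $\nu\in\mathcal{U}^t_{\co}$, and $B\subset\{Y(\tau)>w(\tau,X(\tau))\}$ with $B\in\mathcal{F}^t_\tau$ and $\mathbb{P}(B)>0$. Decompose $B=B_1\sqcup B_2$ with $B_1=B\cap\{w_1(\tau,X(\tau))\leq w_2(\tau,X(\tau))\}$ and $B_2=B\setminus B_1$; both sets are in $\mathcal{F}^t_\tau$, so at least one has positive probability, say $B_1$ (the other case is symmetric). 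On $B_1$ we have $w(\tau,X(\tau))=w_1(\tau,X(\tau))$, whence $B_1\subset\{Y(\tau)>w_1(\tau,X(\tau))\}$. The super-solution property of $w_1$ then gives $\mathbb{P}(Y(\rho)>w_1(\rho,X(\rho))\mid B_1)>0$, and the inequality $w\leq w_1$ upgrades this to $\mathbb{P}(Y(\rho)>w(\rho,X(\rho))\mid B)>0$, as required.

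For the sub-solution claim, set $w:=w_1\vee w_2$. Fix $(t,x,y)$, $\tau\in\mathcal{T}_t$ and $\nu\in\mathcal{U}^t_{\co}$. Let $A_1=\{w_1(\tau,X^\nu(\tau))\geq w_2(\tau,X^\nu(\tau))\}\in\mathcal{F}^t_\tau$ and $A_2=A_1^c$. For each $i\in\{1,2\}$ \deref{def: Stochasticsub-solution_subhedging} supplies $\tilde{\nu}_i\in\mathcal{U}^t_{\co}$ and $\rho_i\in\mathcal{T}_\tau$ satisfying the required property for $w_i$. Define $\tilde{\nu}:=\tilde{\nu}_1\mathbbm{1}_{A_1}+\tilde{\nu}_2\mathbbm{1}_{A_2}$ (on $(\tau,T]$) and $\rho:=\rho_1\mathbbm{1}_{A_1}+\rho_2\mathbbm{1}_{A_2}$. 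Since the concatenation $\nu\otimes_\tau\tilde{\nu}$ agrees with $\nu\otimes_\tau\tilde{\nu}_i$ on $A_i$ up to $\rho$, we have, on $A_i\cap\{Y(\tau)\leq w(\tau,X(\tau))\}=A_i\cap\{Y(\tau)\leq w_i(\tau,X(\tau))\}$, the pathwise identity $(X,Y)=(X^{\nu\otimes_\tau\tilde{\nu}_i},Y^{\nu\otimes_\tau\tilde{\nu}_i})$, and hence $Y(\rho)=Y(\rho_i)\leq g(X(\rho_i))=g(X(\rho))$. Summing over $i$ proves the claim.

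The only subtlety is the gluing in the sub-solution case: one must check that pasting two admissible controls along an $\mathcal{F}^t_\tau$-measurable partition preserves membership in $\mathcal{U}^t_{\co}$. Predictability is immediate because $A_i\in\mathcal{F}^t_\tau$, and the admissibility bound \eqref{eq:admissibility_sub} is preserved with constant $K^{C,\tilde{\nu}_1,\cdot}_{\co}\vee K^{C,\tilde{\nu}_2,\cdot}_{\co}$ on each compact set, so $\nu\otimes_\tau\tilde{\nu}\in\mathcal{U}^t_{\co}$. This bookkeeping is the main (though minor) obstacle, and is precisely why the authors classify the result as routine and omit the proof.
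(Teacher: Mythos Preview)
Your proof is correct and follows exactly the standard approach the authors have in mind when they say the lemma ``can be easily checked'': splitting into $\mathcal{F}^t_\tau$-measurable regions according to which of $w_1$, $w_2$ dominates, then invoking the defining property of the dominant function on each piece (together with the routine pasting of controls and stopping times in the sub-solution case). Since the paper omits the proof entirely, there is nothing further to compare.
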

\begin{lemma}\label{lem: monotone seq approaches v+ or v_--subhedging}
There exists a non-increasing sequence $\{w_{n}\}_{n=1}^{\infty}\subset\bU^{+}_{\co}$ such that $w_n\searrow u^+_{\co}$ and a non-decreasing sequence $\{v_{n}\}_{n=1}^{\infty}\subset\bU^{-}_{\co}$ such that $v_n\nearrow u^-_{\co}$.
\end{lemma}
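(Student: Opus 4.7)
The plan is to build the two sequences by dual procedures; since the constructions of $\{w_n\}\subset\bU^{+}_{\co}$ and $\{v_n\}\subset\bU^{-}_{\co}$ are symmetric (pairwise minima versus pairwise maxima, $u^+_{\co}$ versus $u^-_{\co}$, non-increase versus non-decrease), I describe only the case of $\{w_n\}$. The two core ingredients are the closure of $\bU^{+}_{\co}$ under pairwise minimization established in the preceding lemma, and the separability of $\D=[0,T]\times\R^{d}$ as a metric space, which will let us replace an uncountable infimum by a countable one.

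I would fix a countable dense subset $D=\{(t_k,x_k)\}_{k\in\N}$ of $\D$. Since $u^{+}_{\co}=\inf_{w\in\bU^{+}_{\co}}w$ is a pointwise infimum, for every $(k,m)\in\N^{2}$ one can select $\bar w_{k,m}\in\bU^{+}_{\co}$ with
\[
\bar w_{k,m}(t_k,x_k)\leq u^{+}_{\co}(t_k,x_k)+\tfrac{1}{m}.
\]
After re-enumerating this countable family as $\{\bar w_n\}_{n\in\N}$, I define the running minima
\[
w_n:=\bar w_1\wedge\bar w_2\wedge\cdots\wedge\bar w_n,\qquad n\in\N.
\]
Iterating the preceding lemma gives $w_n\in\bU^{+}_{\co}$; the sequence is non-increasing by construction and dominates $u^{+}_{\co}$, so it admits a pointwise monotone limit $\tilde u:=\lim_n w_n\geq u^{+}_{\co}$.

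The remaining task is to verify $\tilde u=u^{+}_{\co}$ pointwise. On $D$ this is immediate: for $(t_k,x_k)\in D$ and $m\in\N$, once the index of $\bar w_{k,m}$ in the enumeration is at most $n$ one has $w_n(t_k,x_k)\leq u^{+}_{\co}(t_k,x_k)+1/m$, so letting $n\to\infty$ and then $m\to\infty$ yields $\tilde u=u^{+}_{\co}$ on $D$. The main obstacle is extending this equality from $D$ to an arbitrary $(t,x)\in\D$: both $\tilde u$ (a decreasing pointwise limit of continuous functions) and $u^{+}_{\co}$ (an infimum of continuous functions) are only USC, and two USC functions agreeing on a dense set need not coincide. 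To close this gap I would enrich the countable family before taking running minima by adjoining, for each member $B_\ell$ of a countable base for $\D$ and each $m\in\N$, a further element of $\bU^{+}_{\co}$ built as the pairwise minimum of finitely many super-solutions drawn from a finite subcover of a compact subset of $B_\ell$, each chosen (using continuity of the members of $\bU^{+}_{\co}$ and the definition of $u^{+}_{\co}$ as a pointwise infimum) to stay within $1/m$ of $u^{+}_{\co}$ on its piece of the cover. The lattice closure turns these local approximants into a single element of $\bU^{+}_{\co}$ that lies within $1/m$ of $u^{+}_{\co}$ uniformly on the chosen compact subset of $B_\ell$; incorporating these enriched approximants into the running minima forces $\tilde u\leq u^{+}_{\co}+2/m$ on $\D$ for every $m$, whence $\tilde u=u^{+}_{\co}$.

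The sequence $\{v_n\}\subset\bU^{-}_{\co}$ is obtained by dualizing throughout: pairwise maxima in place of pairwise minima, approximants $\bar v_{k,m}\in\bU^{-}_{\co}$ with $\bar v_{k,m}(t_k,x_k)\geq u^{-}_{\co}(t_k,x_k)-1/m$, running maxima $v_n:=\bar v_1\vee\cdots\vee\bar v_n\in\bU^{-}_{\co}$, and a monotone increasing pointwise limit $\tilde v\leq u^{-}_{\co}$; the analogous enrichment (now exploiting LSC of $u^{-}_{\co}$ and pairwise maximization closure of $\bU^{-}_{\co}$) yields $\tilde v=u^{-}_{\co}$ and hence $v_n\nearrow u^{-}_{\co}$.
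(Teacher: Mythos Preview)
The paper omits the proof of this lemma (and of its counterpart in Section~3), calling it easy to check, so there is no paper argument to compare against directly; I will assess your proof on its own.

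You correctly identify the real difficulty: running minima of approximants chosen only at a countable dense set $D$ yields a USC limit $\tilde u$ that agrees with the USC function $u^{+}_{\co}$ on $D$, and two USC functions agreeing on a dense set need not coincide. However, your enrichment step does not close this gap. The construction you sketch requires, for each compact $K\subset\D$ and each $m$, an element $w\in\bU^{+}_{\co}$ with $w\leq u^{+}_{\co}+1/m$ on $K$. But you build $w$ as a finite minimum of $w_{p_i}$ where $w_{p_i}(p_i)<u^{+}_{\co}(p_i)+1/(2m)$; continuity of $w_{p_i}$ only gives $w_{p_i}(q)<u^{+}_{\co}(p_i)+1/m$ for $q$ near $p_i$, not $w_{p_i}(q)<u^{+}_{\co}(q)+1/m$. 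Since $u^{+}_{\co}$ is merely USC, $u^{+}_{\co}(q)$ can be much smaller than $u^{+}_{\co}(p_i)$ for $q$ arbitrarily close to $p_i$, so the inequality $w\leq u^{+}_{\co}+1/m$ on $K$ fails in general. In fact, a discontinuous USC function cannot be uniformly approximated from above on compacts by continuous functions, so no element of $\bU^{+}_{\co}$ with that property exists.

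The clean argument avoids uniform approximation entirely and uses the Lindel\"of property of the separable metric space $\D$. For each rational $r$ the set $\{u^{+}_{\co}<r\}$ is open (since $u^{+}_{\co}$ is USC) and equals $\bigcup_{w\in\bU^{+}_{\co}}\{w<r\}$; extract a countable subcover $\{w_{r,j}<r\}_{j\in\N}$. The countable family $\{w_{r,j}:r\in\Q,\,j\in\N\}$ then satisfies $\inf_{r,j}w_{r,j}=u^{+}_{\co}$: indeed, given any $(t_0,x_0)$ and $\epsilon>0$, choose $r\in\Q$ with $u^{+}_{\co}(t_0,x_0)<r<u^{+}_{\co}(t_0,x_0)+\epsilon$; then $(t_0,x_0)\in\{w_{r,j}<r\}$ for some $j$, i.e.\ $w_{r,j}(t_0,x_0)<u^{+}_{\co}(t_0,x_0)+\epsilon$. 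Re-enumerate this countable family as $\{\bar w_n\}$ and set $w_n:=\bar w_1\wedge\cdots\wedge\bar w_n\in\bU^{+}_{\co}$; then $w_n\searrow u^{+}_{\co}$. The construction of $\{v_n\}$ is dual, using $\{u^{-}_{\co}>r\}=\bigcup_{v\in\bU^{-}_{\co}}\{v>r\}$.
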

\begin{theorem} \label{thm: main theorem_interior-subhedging}
Under Assumptions \ref{assump: lambda intensity kernel}, \ref{assump: regu_on_coeff}, \ref{assump:semisolution_not_empty_sub} and \ref{assump: regularity_sub},
$u^+_{\co}$ is an upper semi-continuous (USC) viscosity sub-solution of \eqref{eq: sub_HJB equation_interior-subhedging}. On the other hand, under Assumptions \ref{assump: lambda intensity kernel}, \ref{assump: regu_on_coeff} and \ref{assump:semisolution_not_empty_sub}, $u^-_{\co}$ is a lower semi-continuous (LSC) viscosity super-solution of \eqref{eq: super_HJB equation_interior-subhedging}.
\end{theorem}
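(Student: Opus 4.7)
The plan mirrors the proof of Theorem~\ref{thm: main theorem_interior-superhedging}, with the roles of the controller and the stopper swapped to accommodate the cooperative set-up. Lemma~\ref{lem: monotone seq approaches v+ or v_--subhedging} supplies monotone sequences $\{w_n\}\subset\bU^+_{\co}$ with $w_n\searrow u^+_{\co}$ and $\{v_n\}\subset\bU^-_{\co}$ with $v_n\nearrow u^-_{\co}$, so upper (respectively lower) semicontinuity of $u^+_{\co}$ (respectively $u^-_{\co}$) is automatic. The argument splits into two symmetric parts.

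\textbf{Part A (super-solution of $u^-_{\co}$).} For the obstacle inequality $u^-_{\co}\geq g$ on $\D$, suppose to the contrary that $g(x_0)-u^-_{\co}(t_0,x_0)=2\eta>0$. Given any $v\in\bU^-_{\co}$, construct a continuous bump perturbation $v'\geq v$ supported in a small ball $B_\eps(t_0,x_0)$ with $v'(t_0,x_0)>u^-_{\co}(t_0,x_0)$ and $v'<g$ throughout the bump. To verify $v'\in\bU^-_{\co}$, given $(\tau,\nu)$ split according to $A:=\{v'(\tau,X(\tau))=v(\tau,X(\tau))\}$: on $A$ use $(\rho^{v,\tau,\nu},\tilde\nu^{v,\tau,\nu})$ afforded by $v\in\bU^-_{\co}$; on $A^c$ take $\rho=\tau$, so that $\{Y(\tau)\leq v'(\tau,X(\tau))\}\cap A^c\subset\{Y(\tau)<g(X(\tau))\}$ by construction. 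This contradicts $u^-_{\co}=\sup\bU^-_{\co}$. For the PDE part, assume $0=(u^-_{\co}-\vp)(t_0,x_0)=\min_{\Di}(u^-_{\co}-\vp)$, $u^-_{\co}(t_0,x_0)>g(x_0)$, and $-\partial_t\vp(t_0,x_0)+F^*\vp(t_0,x_0)<-2\eta$. Upper semicontinuity of $F^*$ together with a perturbation $\tilde\vp(t,x)=\vp(t,x)-\iota|x-x_0|^{n_0}$ (with $n_0$ chosen to dominate the polynomial growth of the $v_n$) produces a neighbourhood on which $\mu_Y-\mathbf{L}^{u^*}\tilde\vp\leq -\eta$ for some feedback $u^*\in\mathcal{M}$. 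Pick $v\in\bU^-_{\co}$ close enough to $u^-_{\co}$ that $\tilde\vp<v-\alpha$ on an appropriate annulus, and define $v^\kappa:=(\tilde\vp+\kappa)\vee v$ on $\mathrm{cl}(B_\eps(t_0,x_0))$, $v^\kappa=v$ elsewhere. Since $v^\kappa(t_0,x_0)>u^-_{\co}(t_0,x_0)$, a contradiction follows once $v^\kappa\in\bU^-_{\co}$ is shown, which is done by concatenating, at a tube exit $\theta$, any admissible control in the interior with the control and stopping rule guaranteed by $v\in\bU^-_{\co}$ at $\theta$.

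\textbf{Part B (sub-solution of $u^+_{\co}$).} If $u^+_{\co}(t_0,x_0)=g(x_0)$ the obstacle clause is trivial, so assume $u^+_{\co}(t_0,x_0)>g(x_0)$, $0=(u^+_{\co}-\vp)(t_0,x_0)=\max_{\Di}(u^+_{\co}-\vp)$ and $-\partial_t\vp(t_0,x_0)+F_*\vp(t_0,x_0)>2\eta$. Set $\tilde\vp(t,x)=\vp(t,x)+\iota|x-x_0|^{n_0}$, choose $w\in\bU^+_{\co}$ approximating $u^+_{\co}$ from above, and put $w^\kappa:=(\tilde\vp-\kappa)\wedge w$ on a small parabolic tube and $w^\kappa=w$ outside. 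To check $w^\kappa\in\bU^+_{\co}$, fix $(\tau,\rho,\nu)$ and $B\subset\{Y(\tau)>w^\kappa(\tau,X(\tau))\}$ with $\mathbb{P}(B)>0$, and split $B=(B\cap A)\sqcup(B\cap A^c)$ with $A:=\{w^\kappa(\tau,X(\tau))=w(\tau,X(\tau))\}$. On $B\cap A$ the super-solution property of $w$ yields the desired positivity. On $B\cap A^c$, introduce a Dol\'eans-Dade exponential $L$ whose Brownian and jump components are tuned through the Lipschitz feedback $\hat\nu$ produced by Assumption~\ref{assump: regularity_sub} applied to $\mathcal{M}_{0,-\eta}(\cdot,\tilde\vp)$, so that $\Gamma L$ is a local sub-martingale with $\Gamma(\tau)L(\tau)>0$ on $B\cap A^c$, where $\Gamma(s):=Y(s)-\tilde\vp(s,X(s))+\kappa$. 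The cooperative admissibility bound \eqref{eq:admissibility_sub} gives the one-sided control on $\Delta\Gamma$ needed to dominate $\Gamma L$ above by a bounded multiple of $L$, upgrading $\Gamma L$ to a true sub-martingale by Fatou. Positivity thus transfers to $\Gamma(\theta)L(\theta)>0$ on a subset of positive probability, on which $Y(\theta)>w(\theta,X(\theta))$ at the tube-exit time $\theta$; the super-solution property of $w$ at $\theta$ then concludes.

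\textbf{Main obstacle.} The technical crux is the construction of $L$ in the presence of controlled jumps: the jump coefficients $\delta_i^e$ must be engineered so that simultaneously $L$ is a nonnegative local martingale, the predictable compensator of $\Gamma L$ carries the correct sign, and the admissibility bound \eqref{eq:admissibility_sub} (respectively \eqref{eq:admissibility_super} in Part A) forces a one-sided bound on $\Delta\Gamma$ strong enough to upgrade $\Gamma L$ from local to true sub- or super-martingale. Assumption~\ref{assump: regularity_sub} is the cooperative analogue of Assumption~\ref{assump: regularity-super} and plays precisely the role of producing the Lipschitz feedback $\hat\nu$ needed for this construction; checking that $\nu\otimes_\tau\hat\nu$ remains in $\mathcal{U}^t_{\co}$ and satisfies \eqref{eq:integrablility} after concatenation is the second subtle point. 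All remaining items—continuity and polynomial growth of $v^\kappa,w^\kappa$, a Dini-type argument to select $n$ large enough, and standard measurability checks for the concatenated controls—are routine adaptations of the corresponding arguments in Theorem~\ref{thm: main theorem_interior-superhedging}.
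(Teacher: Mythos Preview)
Your two-part decomposition and the obstacle arguments are correct and match the paper. However, you have swapped the roles of Assumption~\ref{assump: regularity_sub} and the Dol\'eans--Dade construction between Parts~A and~B, and this is a genuine gap.

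In Part~B (sub-solution of $u^+_{\co}$) the property to be checked is that of a stochastic \emph{super}-solution (Definition~\ref{def: Stochasticsuper-solution_subhedging}), which is a ``for all $\nu,\rho$'' statement: the control $\nu$ is \emph{given}, not chosen. Hence $L$ cannot be ``tuned through the Lipschitz feedback $\hat\nu$''. In the paper's argument the coefficients $\alpha(s),\delta^e_i(s)$ of $L$ are explicit functions of the \emph{arbitrary} $\nu(s)$, through $a(s),\pi(s),d(s),c^e_i(s)$, and are engineered case-by-case via the sets $A_0=\{|\pi|\le\eps\}$, $A_1=\{c^e_i\le\eta\text{ for all }i,e\}$, $A_2=A_1^c$, $A_{3,i}=\{c^e_i\ge\eta/2\}$ so that the drift of $\Gamma L$ is nonnegative whatever $\nu$ does: on $A_0\cap A_1$ the contradiction hypothesis forces $a\ge\eta$; on $A_0\cap A_2$ a jump from some $A_{3,i}$ is loaded into $\delta^e_i$ to compensate; on $A_0^c$ the Girsanov drift $\alpha$ kills $a+d$. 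Assumption~\ref{assump: regularity_sub} plays no role here; the paper explicitly says Step~1 is ``similar to Step~2 of Theorem~3.1 in \cite{BayraktarLi-Jump}'', i.e.\ the step \emph{without} the regularity selector.

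Conversely, in Part~A Step~B (super-solution PDE for $u^-_{\co}$) one must verify the stochastic \emph{sub}-solution property (Definition~\ref{def: Stochasticsub-solution_subhedging}), which requires \emph{constructing} $(\tilde\nu,\rho)$. This is where Assumption~\ref{assump: regularity_sub} is indispensable: it produces a locally Lipschitz selector $\hat\nu\in\mathcal{M}_{0,-\eta}(\cdot,\tilde\vp)$, one runs the state under the feedback $\nu_0(s)=\hat\nu(s,X(s),Y(s),D\tilde\vp(s,X(s)))$ on $[\tau,\theta)$ so that $\Gamma(s)=Y(s)-\tilde\vp(s,X(s))$ is \emph{pathwise} non-increasing (drift $\le-\eta$ and every jump $\le-\eta$), and then hands off to the pair $(\tilde\nu^{v,\theta},\rho^{v,\theta})$ supplied by $v\in\bU^-_{\co}$. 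Your phrase ``any admissible control in the interior'' would not force $\Gamma$ to decrease; the specific feedback is essential. The paper says Step~2B ``follows the outline of Step~1 in the proof of Theorem~3.1 of \cite{BayraktarLi-Jump}'', i.e.\ the feedback step. (The theorem \emph{statement} in the paper attaches Assumption~\ref{assump: regularity_sub} to $u^+_{\co}$, which may have misled you, but the proof itself uses it only for $u^-_{\co}$.)
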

\begin{proof} 
{\bf Step 1 ($u_{\co}^+$ is a viscosity sub-solution).} The proof of this claim is similar to Step 2 of the proof of Theorem 3.1 in \cite{BayraktarLi-Jump}. The difference is that the proof uses sub-martingale property since the target is $Y \leq g(X)$ instead of $Y \ge g(X)$.

{\bf \noindent Step 2 ($u^-_{\co}$ is a viscosity super-solution).} \\
\textbf{Step A:} We show in this step that $u^{-}_{\co}(t,x)\geq g(x)$ for all $(t,x)\in\D$.  Assume, on the contrary, that for some $(t_{0},x_{0})\in\D$, there exists $\eta>0$ such that
\begin{equation}\label{eq:subhedging-interior-u-minus-step2A-contra}
2\eta=g(x_{0})-u^{-}_{\co}(t_{0},x_{0})>0.
\end{equation}
Choose an arbitrary $w\in\bU_{\co}^{-}$. By the definition of $\bU_{\co}^{-}$ and lower semi-continuity of $g$, there exists $\eps>0$ such that 
\begin{equation*}
g(x)-w(t,x)>\eta, \; g(x)-g(x_{0})>-\frac{\eta}{2}, \; |w(t,x)-w(t_{0},x_{0})|\leq \frac{\eta}{2}\text{ for all } (t,x)\in\text{cl}(B_{\eps}(t_0,x_0)).
\end{equation*}
Define 
\begin{equation*}
w'(t,x):=
\left \{ 
\begin{split}
&w(t,x)+(g(x_{0})-\eta-w(t_{0},x_{0}))\left(1-\text{dist}((t,x), (t_{0},x_{0}))/\eps\right) &\text{ for } (t,x)\in\text{cl}(B_{\eps}(t_0, x_0)),\\
&w(t,x)  & \text{ for } (t,x)\notin\text{cl}(B_{\eps}(t_0, x_0)).
\end{split}
\right.
\end{equation*}
Obviously, $w'\geq w$ and $w'$ is continuous with polynomial growth. In addition,  
\begin{equation}\label{eq:subhedging-interior-u-minus-step2A-1}
\{(t,x):w(t,x)<w'(t,x)\}=B_{\eps}(t_{0},x_{0}) \quad \text{and}
\end{equation}
\begin{equation}\label{eq:subhedging-interior-u-minus-step2A-2}
w'(t,x)\leq w(t,x)+ (g(x_{0})-\eta-w(t_{0},x_{0}))< g(x_{0})-\frac{\eta}{2}<g(x) \text{ for }(t,x)\in\text{cl}(B_{\eps}(t_{0},x_{0})).
\end{equation}
The equation above, along with the fact that $w\in\bU^{-}_{\co}$, implies that $w'(T,x)\leq g(x)$ for all $x\in\R^{d}$. Noting that $w'(t_{0},x_{0})=g(x_{0})-\eta>u_{\co}^{-}(t_{0},x_{0})$ due to \eqref{eq:subhedging-interior-u-minus-step2A-contra}, we would obtain a contradiction if we could show $w'\in\bU^{-}_{\co}$. We now prove that $w'\in\bU^{-}_{\co}$. 

Fix $(t,x,y)\in \D_i\times\mathbb{R}$, $\tau \in \mathcal{T}_t$ and $\nu\in\mathcal{U}_{\co}^{t}$. For $w\in\bU_{\co}^{-}$, let $\rho^{w,\tau,\nu} \in \mathcal{T}_{\tau}$ and $\tilde{\nu}^{w,\tau,\nu}$ be the ``optimal'' stopping time and control satisfying the second item in Definition \ref{def: Stochasticsub-solution_subhedging}. In order to show that $w'\in\bU^-_{\co}$, we want to construct an ``optimal'' stopping time $\rho$ and ``optimal'' control $\tilde{\nu}$ which work for $w'$ in the sense of Definition \ref{def: Stochasticsub-solution_subhedging}. Let $A=\{w(\tau,X(\tau))=w'(\tau,X(\tau))\}\in\mathcal{F}^t_{\tau}$, 
\begin{equation*}
\rho=\mathbbm{1}_{A}\rho^{w,\tau,\nu}+\mathbbm{1}_{A^c}\tau \;\text{and}\; \tilde{\nu}=(\mathbbm{1}_{A}\tilde{\nu}^{w,\tau,\nu}+\mathbbm{1}_{A^c} u_{0})\mathbbm{1}_{[\tau, T]}, 
\end{equation*}
where $u_{0}$ is an arbitrary element in $U$. Obviously, $\rho\in\mathcal{T}_{\tau}$ and $\tilde{\nu}\in\mathcal{T}_{t}$. It suffices to show 
$$
Y(\rho)\leq g(X(\rho))\;\; \P\as\;\;\text{on}\;\;\{Y\leq w'(\tau, X(\tau))\}.
$$
(i) On $A\cap\{Y\leq w'(\tau, X(\tau))\}$: Note that $A\cap\{Y\leq w'(\tau, X(\tau))\} \subset \{Y(\tau)\leq w(\tau,X(\tau))\}$.  From the fact $w\in\bU^{-}_{\co}$ and the definition of $\rho$ and $\tilde{\nu}$ on $A$, it holds that
\begin{equation}\label{eq:case1_super-solution-property-subhedging}
Y(\rho)=Y(\rho^{w,\tilde{\nu},\tau})\leq g(X(\rho^{w,\tilde{\nu},\tau}))= g(X(\rho)) \text{ on } A\cap\{Y\leq w'(\tau, X(\tau))\}.
\end{equation}
(ii) On $A^{c}\cap\{Y\leq w'(\tau, X(\tau))\}$: $(\tau,X(\tau))\in B_{\eps}(t_0,x_0)$ on $A^{c}$ from \eqref{eq:subhedging-interior-u-minus-step2A-1}, which implies $w'(\tau,X(\tau))<g(X(\tau))$ from \eqref{eq:subhedging-interior-u-minus-step2A-2}. This, together with the fact that $\rho=\tau$ on $A^{c}$, implies that 
\begin{equation}\label{eq:case2_super-solution-property-subhedging}
Y(\rho)\leq w'(\rho, X(\rho))\leq g(X({\rho}))\text{  on  }A^{c}\cap\{Y\leq w'(\tau, X(\tau))\}. 
\end{equation}
\textbf{Step B:} 
We claim that $u^{-}_{\co}$ is a viscosity super-solution to 
$$
-\partial_t\vp(t,x)+ F^*\vp(t,x) \geq 0. 
$$
We omit this proof, which is rather long, in the interest of space. This follows the outline of Step 1 in the proof ofTheorem 3.1 of \cite{BayraktarLi-Jump}. It is worth noting that
after the construction of $w^{\kappa}$ in that proof, given $(t,x,y)\in \D_i\times\mathbb{R}$, $\tau \in \mathcal{T}_t$ and $\nu\in\mathcal{U}_{\co}^{t}$, we need to construct an ``optimal'' stopping time $\rho$ and ``optimal'' control $\tilde{\nu}$ which work for $w'$ in the sense of Definition \ref{def: Stochasticsub-solution_subhedging} (as we did above in Step A).
\end{proof}

\subsection{Boundary condition for $u_{\co}$}
As for the boundary conditions, instead of studying $u_{\co}^+(T,x)$ and $u_{\co}^-(T,x)$, we still consider
\begin{equation*}\label{eq: boundary_u-_def_subhedging}
u_{\co}^+(T-,x)=\limsup _{(t<T, x')\rightarrow (T,x)} u_{\co}^+(t,x'), \;\;u_{\co}^-(T-,x)=\liminf _{(t<T, x')\rightarrow (T,x)} u_{\co}^-(t,x').
\end{equation*}
\begin{theorem}\label{thm: bd_viscosity_property_subhedging}
Under Assumptions \ref{assump: lambda intensity kernel}, \ref{assump: regu_on_coeff}, \ref{assump:semisolution_not_empty_sub} and \ref{assump: regularity_sub}, $u_{\co}^{+}(T-,\cdot)$ is an USC viscosity sub-solution of 
\begin{equation*}
\left(\vp(x)-g(x)\right)\mathbbm{1}_{\{F_*\vp(x)>-\infty\}}\leq 0\text{ on } \R^d. 
\end{equation*}
Moreover,
$
u_{\co}^{-}(T-,x)\geq g(x) \text{ for all }x\in\R^d.
$
\end{theorem}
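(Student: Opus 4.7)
The statement decomposes into two claims. The inequality $u_{\co}^{-}(T-,x)\geq g(x)$ is almost immediate: Step A in the proof of Theorem \ref{thm: main theorem_interior-subhedging} actually establishes $u_{\co}^{-}(t,x)\geq g(x)$ on \emph{all} of $\D$ (and not only on $\D_i$), so taking the liminf along $(t,x')\to(T,x)$ with $t<T$ and invoking the continuity of $g$ yields the desired bound.

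For the sub-solution property I argue by contradiction. Suppose there exist $x_0\in\R^d$ and $\vp\in C^{2}(\R^d)$ satisfying $0=(u_{\co}^{+}(T-,\cdot)-\vp)(x_0)=\max_{\R^d}(u_{\co}^{+}(T-,\cdot)-\vp)$, and simultaneously $\vp(x_0)>g(x_0)$ and $F_*\vp(x_0)>-\infty$. The plan is to produce some $w^\kappa\in\bU^+_{\co}$ with $w^\kappa(T,x_0)<\vp(x_0)$; once in hand, $u_{\co}^{+}\leq w^\kappa$ together with continuity of $w^\kappa$ would give
$u_{\co}^{+}(T-,x_0)\leq w^\kappa(T,x_0)<\vp(x_0)=u_{\co}^{+}(T-,x_0)$,
a contradiction.

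The construction is a boundary variant of Step 1 in the proof of Theorem \ref{thm: main theorem_interior-subhedging}, with the novelty being a space--time lift of $\vp$ that absorbs the face-lifting. Set $\tilde\vp(t,x):=\vp(x)+L(T-t)+\iota|x-x_0|^{n_0}$. Since $F_*\vp(x_0)>-\infty$, one can extract $\eta>0$ and $u_0\in\mathcal{M}_{0,-\eta}(T,x_0,\vp(x_0),D\vp(x_0),\tilde\vp)$ for which $\mathbf{L}^{u_0}$ is finite; Assumption \ref{assump: regularity_sub} then furnishes a locally Lipschitz feedback $\hat\nu\in\mathcal{M}_{0,-\eta}(\cdot,\tilde\vp)$ on a neighborhood of $(T,x_0,\vp(x_0),D\vp(x_0))$. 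Taking $L$ large enough forces $\mathbf{L}^{\hat\nu(\cdot)}-\partial_t\tilde\vp\geq \eta>0$ on that neighborhood, while $n_0\geq 2$ and $\iota>0$ are tuned so that $\tilde\vp$ grows faster at infinity than a fixed element $w_1$ of an approximating sequence $w_n\searrow u_{\co}^{+}$ (Lemma \ref{lem: monotone seq approaches v+ or v_--subhedging}). For small $\kappa,\eps>0$ set
\[
w^\kappa := ((\tilde\vp-\kappa)\wedge w_n)\,\mathbbm{1}_{\text{cl}(B_\eps(T,x_0))}+w_n\,\mathbbm{1}_{\text{cl}(B_\eps(T,x_0))^c}.
\]
A Dini-type comparison on a compact annular region ensures continuity of $w^\kappa$, and the super-solution property $w^\kappa\in\bU^+_{\co}$ is verified exactly as in Step 1 of Theorem \ref{thm: main theorem_interior-subhedging}: on $A=\{w^\kappa(\tau,X(\tau))=w_n(\tau,X(\tau))\}$ one uses $w_n\in\bU^+_{\co}$ directly, and on $A^c$ one applies It\^o's formula to $\Gamma(s):=Y(s)-\tilde\vp(s,X(s))+\kappa$ together with a Dol\'eans--Dade change of measure driven by $\hat\nu$ to deduce the sub-martingale property of $\Gamma L$ up to the exit time $\theta$ of a small space--time cylinder about $(T,x_0)$.

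The main obstacle -- and the motivation for the $L(T-t)$ summand -- is the event $\{\theta=T\}$: along it the definition of $\bU^+_{\co}$ demands $w^\kappa(T,\cdot)\geq g$, and this is where the strict gap $\vp(x_0)>g(x_0)$ enters. By continuity the gap persists on a neighborhood of $x_0$, so for sufficiently small $\kappa$ and $\eps$ one has $\tilde\vp(T,x)-\kappa=\vp(x)+\iota|x-x_0|^{n_0}-\kappa\geq g(x)$ throughout the spatial projection of the cylinder; combined with $w_n(T,\cdot)\geq g$ this gives $w^\kappa(T,\cdot)\geq g$. Finally, $w^\kappa(T,x_0)\leq\tilde\vp(T,x_0)-\kappa=\vp(x_0)-\kappa<\vp(x_0)$, completing the contradiction.
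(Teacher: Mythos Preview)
Your handling of the inequality $u_{\co}^{-}(T-,x)\geq g(x)$ matches the paper exactly.

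For the sub-solution part, your overall architecture is right --- the contradiction set-up, the space--time lift $\tilde\vp(t,x)=\vp(x)+L(T-t)+\iota|x-x_0|^{n_0}$ with large $L$, the construction of $w^\kappa$, and the check that $w^\kappa(T,\cdot)\geq g$ via the strict gap $\vp(x_0)>g(x_0)$. This is the intended adaptation of the boundary argument.

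There is, however, a genuine conceptual slip in how you use the hypothesis $F_*\vp(x_0)>-\infty$. You claim it lets you extract a point $u_0\in\mathcal{M}_{0,-\eta}$ and then invoke Assumption~\ref{assump: regularity_sub} to obtain a locally Lipschitz feedback $\hat\nu$. Two problems. First, $F_*>-\infty$ does \emph{not} imply $\mathcal{M}_{\eps,\eta}\neq\emptyset$: since $F_{\eps,\eta}=\inf_{u\in\mathcal{M}_{\eps,\eta}}\mathbf{L}^u$, the infimum over the empty set is $+\infty$, so a finite lower bound on $F_{\eps,\eta}$ says nothing about non-emptiness. Second --- and more importantly --- proving $w^\kappa\in\bU^+_{\co}$ means verifying Definition~\ref{def: Stochasticsuper-solution_subhedging}, which quantifies over \emph{arbitrary} $\nu\in\Uc^t_{\co}$ and $\rho\in\mathcal{T}_\tau$; you do not get to choose the control. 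The feedback machinery of Assumption~\ref{assump: regularity_sub} is the tool for Definition~\ref{def: Stochasticsub-solution_subhedging} (where $\tilde\nu$ is \emph{constructed}), i.e.\ for the $u_{\co}^{-}$ super-solution direction, not this one.

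The correct role of $F_*\vp(x_0)>-\infty$ is that it furnishes a \emph{uniform lower bound}: there exist $c\in\R$, $\eps,\eta,\iota>0$ such that $\mathbf{L}^u(\Theta')\geq c$ for every $u\in\mathcal{M}_{\eps,\eta}(\Theta',\tilde\vp)$ and every $\Theta'$ close to $(T,x_0,\vp(x_0),D\vp(x_0),D^2\vp(x_0))$. Choosing $L>|c|+\eta$ then yields $\mu_Y(t,x,y,u)-\mathscr{L}^u\tilde\vp(t,x)=\mathbf{L}^u+L\geq\eta$ for \emph{all} such $u$. This is exactly the local condition (the analogue of \eqref{eq: local contra u^+-subhedging} in the interior step) needed so that the Dol\'eans--Dade exponential $L(\cdot)$ --- built from the \emph{given} arbitrary $\nu$, not from any feedback --- makes $\Gamma L$ a local sub-martingale on $[\tau,\theta]$ on the set $A_0\cap A_1$. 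The rest of your outline then goes through.
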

\begin{proof}
Since $u_{\co}^-(t,x)\geq g(x)$ for any $(t,x)\in\D$ due to Step 2A of Theorem \ref{thm: main theorem_interior-subhedging}, it directly follows that $u^-_\co(T-,x)\geq g(x)$. The proof of the subsolution property is longer but the proof of Theorem 4.1 in \cite{BayraktarLi-Jump} can be adapted to the present case. 
\end{proof}

\section{Cooperative Controller-Stopper Game}\label{sec:equivalence-subhedging}
\noindent In this section, we prove that a cooperative controller-stopper problem can expressed in terms of a stochastic target problem with a cooperative stopper. Given a bounded continuous function $g:\R^d\rightarrow\R$, we define
$$
 \mathbf{u}_{\co}(t,x):=\sup_{\nu\in\mathcal{U}^t}\sup_{\rho\in\mathcal{T}_{t}}\mathbb{E}[g(X_{t,x}^{\nu}(\rho))]. 
$$
We follow the setup of Section \ref{sec:prob} with one exception:
$\mathcal{U}^{t}$ is the collection of all the $\mathbb{F}^{t}$-predictable processes in $\mathbb{L}^2(\Omega\times[0,T], \mathcal{F}\otimes\mathcal{B}[0,T], \mathbb{P}\otimes \lambda_{L}; U)$, where $U\subset \R^{d}$ and $X$ follows the SDE
\begin{equation*}
dX(s)=\mu_{X}(s,X(s),\nu(s))ds+\sigma_{X}(s,X(s),\nu(s))dW_s+\int_{E} \beta(s,X(s-),\nu(s), e)\lambda(ds,de). 
\end{equation*}

\begin{lemma}\label{eq:equivalence_application}
Suppose Assumptions \ref{assump: lambda intensity kernel} and \ref{assump: regu_on_coeff} hold. Define a stochastic target problem as follows:
\begin{equation*}
\begin{gathered}
u_{\co}(t,x):=\sup\{y\in\R: \exists (\nu, \alpha, \gamma)\in \mathcal{U}^t\times\mathcal{A}^t\times\Gamma^t_{\co}\text{ and } \rho\in\mathcal{T}_{t}\;\text{s.t.}\;Y_{t,y}^{\alpha,\gamma}(\rho)\leq g(X_{t,x}^{\nu}(\rho))\}, \text{where} \\
Y_{t,y}^{\alpha,\gamma}(\cdot):=y+\int_t^{\cdot}\alpha^{\top}(s)dW_s+\int_t^{\cdot}\int_E\gamma^{\top}(s,e)\tilde{\lambda}(ds,de) 
\end{gathered}
\end{equation*}
and $\mathcal{A}^t$ and $\Gamma^{t}_{\co}$ are the sets of $\R^{d}$-valued and $\mathbb{L}^{2}(E, \mathcal{E}, \hat{m}; \R^{I})$-valued processes, respectively,  satisfying the admissibility conditions in Section \ref{sec:prob}. 
Then $u_{\co}=\mathbf{u}_{\co}$ on $\D$.
\end{lemma}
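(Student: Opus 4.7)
The plan is to follow the template of Lemma~5.1, but to exploit the fact that in the cooperative setting we are taking $\sup_\nu\sup_\rho$ rather than $\inf_\nu\sup_\rho$, which removes the need for a Snell envelope or a Doob--Meyer decomposition. I would prove the two inequalities $u_{\co} \ge \mathbf{u}_{\co}$ and $u_{\co} \le \mathbf{u}_{\co}$ separately.

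For the direction $u_{\co} \ge \mathbf{u}_{\co}$, fix $\nu \in \mathcal{U}^t$ and $\rho \in \mathcal{T}_t$ and consider the closed martingale
\begin{equation*}
M_s := \mathbb{E}\bigl[g(X^{\nu}_{t,x}(\rho)) \,\big|\, \mathcal{F}^t_s\bigr], \qquad s \in [t,T].
\end{equation*}
Since $g$ is bounded, $M$ is a bounded martingale with $\|M\|_\infty \le \|g\|_\infty$. Applying the martingale representation theorem (Theorem~14.5.7 in \cite{MR3443368}), there exist $\alpha_0 \in \mathcal{A}^t$ and $\gamma_0$ satisfying the measurability and integrability requirements such that
\begin{equation*}
M_s = M_t + \int_t^s \alpha_0^{\top}(u)\,dW_u + \int_t^s\!\!\int_E \gamma_0^{\top}(u,e)\,\tilde\lambda(du,de).
\end{equation*}
The only point that needs verification is that $\gamma_0 \in \Gamma^t_{\co}$, i.e.\ that $\gamma_0$ satisfies the cooperative admissibility condition \eqref{eq:admissibility_sub}. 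Since the Brownian integral is continuous and $m(de)\,du$ is absolutely continuous in time, at any stopping time $\tau$ we have $\Delta M_\tau = \int_E \gamma_0^{\top}(\tau,e)\,\lambda(\{\tau\},de)$, and boundedness of $M$ forces $|\Delta M_\tau| \le 2\|g\|_\infty$; this gives \eqref{eq:admissibility_sub} with a constant independent of the compact set $C$. Setting $y_0 := M_t = \mathbb{E}[g(X^{\nu}_{t,x}(\rho))]$, we obtain $Y^{\alpha_0,\gamma_0}_{t,y_0}(\rho) = M_\rho = g(X^{\nu}_{t,x}(\rho))$, so $y_0$ is admissible for $u_{\co}(t,x)$. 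Taking suprema over $(\nu,\rho)$ yields $u_{\co}(t,x) \ge \mathbf{u}_{\co}(t,x)$.

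For the reverse direction $u_{\co} \le \mathbf{u}_{\co}$, take any $y$ witnessed by $(\nu,\alpha,\gamma,\rho) \in \mathcal{U}^t \times \mathcal{A}^t \times \Gamma^t_{\co} \times \mathcal{T}_t$ with $Y^{\alpha,\gamma}_{t,y}(\rho) \le g(X^{\nu}_{t,x}(\rho))$ almost surely. The integrability conditions on $\alpha$ and $\gamma$ make $Y^{\alpha,\gamma}_{t,y}$ a true $L^2$-martingale on $[t,T]$, so optional stopping at the bounded time $\rho$ gives
\begin{equation*}
y = \mathbb{E}\bigl[Y^{\alpha,\gamma}_{t,y}(\rho)\bigr] \le \mathbb{E}\bigl[g(X^{\nu}_{t,x}(\rho))\bigr] \le \mathbf{u}_{\co}(t,x).
\end{equation*}
Taking the supremum over admissible $y$ completes the proof.

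The main (and essentially only) obstacle is the upgrade from a $\gamma_0$ produced by martingale representation to one that lies in $\Gamma^t_{\co}$. In the non-cooperative Lemma~5.1 this required a contradiction argument against the $L^\infty$-bound of the Snell envelope of $g(X^{\nu})$; here the corresponding upgrade reduces to a one-line jump estimate on $M$ itself, thanks to the assumed boundedness of $g$. No additional machinery beyond martingale representation and optional stopping is needed.
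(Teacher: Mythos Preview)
Your proposal is correct and follows essentially the same approach as the paper. The paper packages the two inequalities into the abstract Lemma~\ref{lem:stochastic_target_representation_subhedging} and Remark~\ref{re:subhedging}, then checks the inclusion $\mathcal{M}_{\co}\subset\mathcal{M}$ via martingale representation together with the jump bound $|\Delta M_\tau|\le 2\|g\|_\infty$ (phrased there as a contradiction argument); you unpack these steps directly, but the substance---martingale representation of $\mathbb{E}[g(X^{\nu}_{t,x}(\rho))\mid\mathcal{F}^t_\cdot]$, the jump estimate to secure \eqref{eq:admissibility_sub}, and optional stopping for the reverse inequality---is identical.
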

\begin{proof}
In view of Lemma \ref{lem:stochastic_target_representation_subhedging} and Remark \ref{re:subhedging}, it suffices to check that
\begin{equation}\label{eq:inclusion_equi-subhedging}
\mathcal{M}_{\co}\subset
\mathcal{M}:=\left\{Y_{t,y}^{\alpha,\gamma}(\cdot): y\in\R, \alpha\in\mathcal{A}^{t}, \gamma\in\Gamma^{t} \right\},
\end{equation}
where $\mathcal{M}_{\co}$ is defined as in Remark  \ref{re:subhedging}. In fact, by the martingale representation theorem, for  any $\nu\in\mathcal{U}^{t}$ and $\rho\in\mathcal{T}_{t}$, $\mathbb{E}[g(X_{t,x}^{\nu}(\rho))|\mathcal{F}^{t}_{\cdot}]$ can be represented in the form of $Y_{t,y}^{\alpha, \gamma}$ for some $\alpha \in\mathcal{A}^{t}$ and $\gamma\in\Gamma^{t}_{0}$,\footnote{Such $\alpha$ and $\gamma$ are unique} where  $\Gamma_{0}^{t}$ is the set of $\mathbb{L}^{2}(E, \mathcal{E}, \hat{m}; \R^{I})$-valued processes satisfying all of the admissibility conditions except $\eqref{eq:admissibility_sub}$. We now prove that such $\gamma$ satisfies the condition in \eqref{eq:admissibility_sub}, thus finishing the proof.

Assume, contrary to \eqref{eq:inclusion_equi-subhedging}, that there exists $\nu_{0}\in\mathcal{U}^{t}$ and $\rho\in\mathcal{T}_{t}$ such that
\begin{equation*}
\mathbb{E}[g(X_{t,x}^{\nu_{0}}(\rho))|\mathcal{F}^{t}_{\cdot}]=y+\int_t^{\cdot}\alpha_{0}^{\top}(s)dW_s+\int_t^{\cdot}\int_E\gamma_{0}^{\top}(s,e)\tilde{\lambda}(ds,de)
\end{equation*}
for some $y\in\mathbb{R}$,  $\alpha_{0}\in\mathcal{A}^{t}$ and $\gamma_{0}\in\Gamma^{t}_{0}$, but  \eqref{eq:admissibility_sub} does not hold for $\gamma_{0}$. In the equation above, $\mathbb{E}[g(X_{t,x}^{\nu_{0}}(\rho))|\mathcal{F}^{t}_{\cdot}]$ can be chosen to be c\`adl\`ag, thanks to Theorem 1.3.13 in \cite{ShreveKaratzas}. Then for $K>2\|g\|_{\infty}$, there exists $\tau_{0}\in\mathcal{T}_{t}$ such that 
$\mathbb{P}\left(\int_{E}\gamma_{0}^{\top}(\tau_{0},e) \lambda(\{\tau_{0}\},de)>K\right)>0.$ Letting $M_{0}(\cdot)=\mathbb{E}\left[g(X_{t,x}^{\nu_{0}}(\rho))|\mathcal{F}^{t}_{\cdot}\right]$, we get that $$M_{0}(\tau_{0})-M_{0}(\tau_{0}-) = \int_{E}\gamma_{0}^{\top}(\tau_{0},e) \lambda(\{\tau_{0}\},de)>K\;\;\text{with positive probability}.$$
Since $|M_{0}|$ is bounded by $\|g\|_{\infty}<K/2$, we obtain a contradiction. 
\end{proof}
Let $\mathbf{F}_{*}$ be the LSC envelope of the USC map $\mathbf{F}:\D\times\R^d\times\mathbb{M}^d\times C(\D) \rightarrow \R$ defined by
\begin{equation*}
\begin{array}{c}
\mathbf{F}: (t,x,p,A, \vp) \rightarrow \inf_{u\in U}\{-I[\vp](t,x,u)-\mu_X^{\top}(t,x,u)p-\frac{1}{2}\text{Tr}[\sigma_X\sigma_X^{\top}(t,x,u)A]\},\;\text{where} \\
I[\vp](t,x,u)=\sum_{1\leq i\leq I}\int_E \left( \vp(t,x+\beta_i(t,x,u,e))-\vp(t,x)\right)m_i(de).
\end{array}
\end{equation*}
\begin{theorem}\label{thm: optimal control-subhedging}
Under Assumptions \ref{assump: lambda intensity kernel} and \ref{assump: regu_on_coeff}, $u^+_{\co}$  is a USC viscosity sub-solution of 
$$
\min\{\vp(t,x)-g(x), -\partial_t\vp(t,x)+\mathbf{F}_{*}\vp(t,x)\}\leq 0\text{ on } \Di
$$
and $u^+_{\co}(T-,x)$ is an USC viscosity sub-solution of 
\begin{equation*}
\left(\vp(x)-g(x)\right)\mathbbm{1}_{\{\mathbf{F}_{*}\vp(x)>-\infty\}}\leq 0\text{ on } \R^d. 
\end{equation*}
On the other hand, $u^-_{\co}$  is an LSC viscosity super-solution of 
$$
\min\{\vp(t,x)-g(x), -\partial_t\vp(t,x)+\mathbf{F}\vp(t,x)\}\geq 0\text{ on } \Di
$$
and $u^-_{\co}(T-,\cdot)\geq g(x)$ for all $x\in\R^{d}$.
\end{theorem}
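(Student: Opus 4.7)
The plan is to mirror the proof of Theorem~\ref{thm: optimal control-superhedging}. First we verify Assumptions~\ref{assump:semisolution_not_empty_sub} and~\ref{assump: regularity_sub} for the stochastic target problem introduced in Lemma~\ref{eq:equivalence_application}. Boundedness of $g$ makes the sufficient conditions of Appendix~A applicable and yields Assumption~\ref{assump:semisolution_not_empty_sub}; Assumption~\ref{assump: regularity_sub} is easily verified from the explicit form of the admissible class in Lemma~\ref{eq:equivalence_application}, exactly as in the super-hedging case treated in the proof of Theorem~\ref{thm: optimal control-superhedging}. Invoking Theorem~\ref{thm: main theorem_interior-subhedging}, we conclude that $u^+_{\co}$ is a USC viscosity sub-solution of~\eqref{eq: sub_HJB equation_interior-subhedging} and $u^-_{\co}$ is an LSC viscosity super-solution of~\eqref{eq: super_HJB equation_interior-subhedging}.

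The next step bridges $F_{*},F^{*}$ and $\mathbf{F}_{*},\mathbf{F}$, playing the same role here as Proposition~3.3 of~\cite{Bouchard_Equivalence} did in the proof of Theorem~\ref{thm: optimal control-superhedging}. We exploit the explicit structure of the $Y$-process in Lemma~\ref{eq:equivalence_application}, for which $\mu_Y=-\sum_i\int_E\gamma_i(e)\,m_i(de)$ and $b_i=\gamma_i$. The constraint $\Pi^{u,e}\leq\eta$ reads $\gamma_i(e)\leq\eta+\psi(t,x+\beta_i)-\psi(t,x)$ and, after integration against $m_i$ and summation, gives $\mu_Y\geq-\eta\hat{m}(E)-I[\psi](t,x,\nu)$, so $F_{\eps,\eta}(\Theta,\psi)\geq-\eta\hat{m}(E)+\mathbf{F}(t,x,p,A,\psi)$. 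In the opposite direction, the explicit choice $\alpha=\sigma_X^{\top}(t,x,\nu)p$ and $\gamma_i(e)=\psi(t,x+\beta_i(t,x,\nu,e))-\psi(t,x)$ produces an element of $\mathcal{M}_{0,0}(\cdot,\psi)$ whose $\mathbf{L}^u$ equals $-I[\psi]-\mu_X^{\top}p-\tfrac{1}{2}\text{Tr}[\sigma_X\sigma_X^{\top}A]$, so $F_{\eps,\eta}(\Theta,\psi)\leq\mathbf{F}(t,x,p,A,\psi)$. Passing to the relaxed semi-limits yields $F_{*}\geq\mathbf{F}_{*}$ and $F^{*}\leq\mathbf{F}^{*}=\mathbf{F}$ (the last equality from the USC hypothesis on $\mathbf{F}$). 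These inequalities have the correct direction to transfer both viscosity properties: $F_*\vp\leq\partial_t\vp$ forces $\mathbf{F}_*\vp\leq\partial_t\vp$, and $F^*\vp\geq\partial_t\vp$ forces $\mathbf{F}\vp\geq\partial_t\vp$.

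For the boundary conditions, $u^-_{\co}(T-,x)\geq g(x)$ is immediate from Step~A of the proof of Theorem~\ref{thm: main theorem_interior-subhedging}, which already shows $u^-_{\co}\geq g$ on the whole of $\D$. For the sub-solution boundary condition, Theorem~\ref{thm: bd_viscosity_property_subhedging} gives that $u^+_{\co}(T-,\cdot)$ is a USC viscosity sub-solution of $(\vp-g)\mathbbm{1}_{\{F_*\vp>-\infty\}}\leq 0$; combining with $F_{*}\geq\mathbf{F}_{*}$, which forces $\{\mathbf{F}_*\vp>-\infty\}\subseteq\{F_*\vp>-\infty\}$, immediately yields the stated $\mathbf{F}_*$-version. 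The main technical hurdle is producing the correctly-directed operator comparisons $F_{*}\geq\mathbf{F}_{*}$ and $F^{*}\leq\mathbf{F}$; this rests crucially on the martingale-plus-drift structure of the $Y$-process used in the embedding of Lemma~\ref{eq:equivalence_application}, which is precisely what makes matching upper and lower bounds on $F_{\eps,\eta}$ available as $\eta\to 0$.
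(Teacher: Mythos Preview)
Your proposal is correct and follows essentially the same route as the paper's own proof. The only difference is that where the paper simply invokes Proposition~3.3 of \cite{Bouchard_Equivalence} to obtain $F^{*}\leq\mathbf{F}$ and $F_{*}\geq\mathbf{F}_{*}$, you supply an explicit derivation of these inequalities by exploiting the particular structure of the $Y$-process in Lemma~\ref{eq:equivalence_application}; the verification of Assumptions~\ref{assump:semisolution_not_empty_sub} and~\ref{assump: regularity_sub}, the appeal to Theorems~\ref{thm: main theorem_interior-subhedging} and~\ref{thm: bd_viscosity_property_subhedging}, and the passage to the boundary conditions all match the paper's argument.
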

\begin{proof}
It is easy to check Assumption \ref{assump: regularity_sub} for the stochastic target problem. Since $g$ is bounded, we can check that all of the assumptions in the Appendix A are satisfied, which implies that Assumption \ref{assump:semisolution_not_empty_sub}
 holds. From Theorem \ref{thm: main theorem_interior-subhedging}, 
$u^+_{\co}$ is a USC viscosity sub-solution of 
$$
\min\{\vp(t,x)-g(x), -\partial_t\vp(t,x)+F_{*}\vp(t,x)\}\leq 0\text{ on } \Di
$$
and $u^-_{\co}$ is an LSC viscosity super-solution of 
$$
\min\{\vp(t,x)-g(x), -\partial_t\vp(t,x)+F^{*}\vp(t,x)\}\geq 0\text{ on } \Di
$$
From Proposition 3.3 in \cite{Bouchard_Equivalence}, 
$F^*\leq \mathbf{F}$ and $F_*\geq \mathbf{F}_{*}$. Thus, we get our desired results. 
\end{proof}
The following two corollaries (whose proofs are omitted) show that $\mathbf{u}_{\co}$ is the unique viscosity solution to its associated HJB equation. 
\begin{corollary}\label{coro2}
Suppose that Assumptions \ref{assump: lambda intensity kernel} and \ref{assump: regu_on_coeff} hold,  $\mathbf{F}=\mathbf{F}_{*}$ on $\{\mathbf{F}>-\infty\}$ and there exists a USC function $\mathbf{G}:\D\times\R\times\R^d\times\mathbb{M}^d\times C(\D)\cap \{\mathbf{F}>-\infty\}\rightarrow\R$ such that 
\begin{equation*}
\begin{array}{c}
(a)\; \mathbf{F}(t,x,y,p,M,\vp)>-\infty \implies \mathbf{G}(t,x,y,p,M,\vp)\geq 0, \\
(b)\;\mathbf{G}(t,x,y,p,M,\vp)>0\implies \mathbf{F}(t,x,y,p,M,\vp)>-\infty.
\end{array}
\end{equation*} 
Then $u^+_{\co}$ $(\text{resp. } u^-_{\co})$ is a USC $($resp. an LSC$)$ viscosity sub-solution $($resp. super-solution$)$ of
\begin{equation*}
\min\{\vp(t,x)-g(x), \max\{-\partial_t\vp(t,x)+\mathbf{F}\vp(t,x), \mathbf{G}\vp(t,x)\}\}=0\;\;\text{on}\;\;\D_{i}.
\end{equation*}  
\end{corollary}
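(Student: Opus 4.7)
The plan is to transfer the viscosity properties from Theorem \ref{thm: optimal control-subhedging} to the new PDE via a case analysis at the test point, following the argument underlying Corollary \ref{coro2-superhedging}. Hypotheses (a)--(b) together with $\mathbf{F}=\mathbf{F}_{*}$ on $\{\mathbf{F}>-\infty\}$ encode the compatibility between $\mathbf{F}$ and $\mathbf{G}$ needed to pass from the envelope operators appearing in Theorem \ref{thm: optimal control-subhedging} to the combined expression $\max\{-\partial_t\vp+\mathbf{F}\vp,\mathbf{G}\vp\}$.

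For the super-solution property of $u^-_{\co}$, I would fix $\vp\in C^{1,2}(\D)$ and a minimizer $(t_0,x_0)\in\D_i$ of $u^-_{\co}-\vp$. Theorem \ref{thm: optimal control-subhedging} delivers $\vp(t_0,x_0)\geq g(x_0)$ and $-\partial_t\vp(t_0,x_0)+\mathbf{F}\vp(t_0,x_0)\geq 0$; the finiteness of the second inequality forces $\mathbf{F}\vp(t_0,x_0)>-\infty$, whence hypothesis (a) yields $\mathbf{G}\vp(t_0,x_0)\geq 0$. Consequently $\max\{-\partial_t\vp+\mathbf{F}\vp,\mathbf{G}\vp\}(t_0,x_0)\geq -\partial_t\vp(t_0,x_0)+\mathbf{F}\vp(t_0,x_0)\geq 0$, which combined with $\vp(t_0,x_0)\geq g(x_0)$ gives the super-solution inequality.

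For the sub-solution property of $u^+_{\co}$, take a maximizer $(t_0,x_0)$ of $u^+_{\co}-\vp$ on $\D_i$. Theorem \ref{thm: optimal control-subhedging} gives $\min\{\vp(t_0,x_0)-g(x_0),-\partial_t\vp(t_0,x_0)+\mathbf{F}_{*}\vp(t_0,x_0)\}\leq 0$; if the first argument is non-positive we are done, so assume $-\partial_t\vp(t_0,x_0)+\mathbf{F}_{*}\vp(t_0,x_0)\leq 0$. I would split two subcases based on the value of $\mathbf{F}\vp(t_0,x_0)$. When $\mathbf{F}\vp(t_0,x_0)=-\infty$, one has $-\partial_t\vp+\mathbf{F}\vp=-\infty$, and the contrapositive of (b), namely $\mathbf{F}=-\infty\Rightarrow\mathbf{G}\leq 0$, gives $\mathbf{G}\vp(t_0,x_0)\leq 0$, so the inner max is non-positive. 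When $\mathbf{F}\vp(t_0,x_0)>-\infty$, the compatibility $\mathbf{F}=\mathbf{F}_{*}$ on $\{\mathbf{F}>-\infty\}$ upgrades the inequality to $-\partial_t\vp(t_0,x_0)+\mathbf{F}\vp(t_0,x_0)\leq 0$.

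The main obstacle is precisely this last subcase: here (a) only yields $\mathbf{G}\vp(t_0,x_0)\geq 0$, so to bring the inner max below zero one must invoke additional structural information on $\mathbf{G}$ at test points where $\mathbf{F}$ is finite, in the spirit of the prototypical construction anticipated by Remark \ref{Remark:existence of G} (for instance a choice of $\mathbf{G}$ that vanishes on the transition locus between $\{\mathbf{F}=-\infty\}$ and $\{\mathbf{F}>-\infty\}$, so that $\mathbf{G}\vp(t_0,x_0)=0$ whenever the preceding inequality is saturated). Once this point is handled, the remainder of the argument is formal and mirrors the proof of Corollary \ref{coro2-superhedging}.
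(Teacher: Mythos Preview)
The paper omits the proof of this corollary (as it does for Corollary~\ref{coro2-superhedging}), so there is no argument to compare against directly; your case-by-case transfer from Theorem~\ref{thm: optimal control-subhedging} is the natural route and presumably what the authors had in mind.

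Your super-solution argument is correct, and in fact slightly simpler than you present it: since the inner operator is a $\max$, the inequality $-\partial_t\vp(t_0,x_0)+\mathbf{F}\vp(t_0,x_0)\ge 0$ coming from Theorem~\ref{thm: optimal control-subhedging} already yields $\max\{-\partial_t\vp+\mathbf{F}\vp,\mathbf{G}\vp\}(t_0,x_0)\ge 0$ without any appeal to~(a). Your sub-solution analysis is likewise correct, and the obstacle you isolate is genuine rather than a defect of your approach. When $\mathbf{F}\vp(t_0,x_0)>-\infty$, hypothesis~(a) forces $\mathbf{G}\vp(t_0,x_0)\ge 0$, hence $\max\{-\partial_t\vp+\mathbf{F}\vp,\mathbf{G}\vp\}(t_0,x_0)\ge 0$, and the sub-solution inequality cannot be deduced from (a), (b) and $\mathbf{F}=\mathbf{F}_*$ alone; no ``additional structural information'' of the sort you invoke is actually supplied by the stated hypotheses. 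This points to a misprint in the statement rather than a missing proof step: if the inner $\max$ is replaced by a $\min$, both directions go through cleanly --- the sub-solution then needs only $\mathbf{F}=\mathbf{F}_*$ on $\{\mathbf{F}>-\infty\}$, while the super-solution uses~(a) to obtain $\mathbf{G}\vp\ge 0$ from the finiteness of $\mathbf{F}\vp$ implied by $-\partial_t\vp+\mathbf{F}\vp\ge 0$.
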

\begin{remark}
A remark similar to Remark \ref{Remark:existence of G} applies regarding the verifiability of the assumption above. 
\end{remark}
\begin{corollary}
Suppose that all the assumptions in Corollary \ref{coro2} hold. Additionally, assume that there is a comparison principle between USC sub-solutions and LSC super-solutions for the PDE
\begin{equation}\label{eq:bd_pde_control}
\min\{\vp(x)-g(x), \mathbf{G}\vp(x)\}=0\;\;\text{on}\;\;\R^{d}.
\end{equation}
Then 
$u^{+}_{\co}(T-,x)=u^{-}_{\co}(T-,x)=\hat{g}(x)$, where $\hat{g}$ is the unique continuous viscosity solution to \eqref{eq:bd_pde_control}. 
Moreover, if the comparison principle holds for 
\begin{equation}
\min\{\vp(t,x)-g(x), \max\{-\partial_t\vp(t,x)+\mathbf{F}\vp(t,x),\; \mathbf{G}\vp(t,x)\}\}=0 \text{ on }\Di,
\end{equation}
then $u_{\co}$$(=\mathbf{u}_{\co})$ is the unique continuous viscosity solution with $u_{\co}(T,x)=\hat{g}(x)$.
\end{corollary}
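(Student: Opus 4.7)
The assertion naturally splits into two parts.

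\emph{Part 1 (identification of the boundary trace).} The plan is to show that $u^{+}_{\co}(T-,\cdot)$ is a USC viscosity sub-solution and $u^{-}_{\co}(T-,\cdot)$ is an LSC viscosity super-solution of \eqref{eq:bd_pde_control}. Combined with the assumed boundary comparison principle and with the sandwich $u^{-}_{\co}\leq u_{\co}\leq u^{+}_{\co}$ from \eqref{eq:intfvmavp-subhedging} passed to the $t\uparrow T$ envelopes, this will force $u^{+}_{\co}(T-,\cdot)=u^{-}_{\co}(T-,\cdot)=\hat g$.

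For the sub-solution property, Theorem~\ref{thm: optimal control-subhedging} already delivers the weaker inequality $(\vp-g)\mathbbm{1}_{\{\mathbf{F}_{*}\vp>-\infty\}}\leq 0$ at each smooth test maximum $x_{0}$ of $u^{+}_{\co}(T-,\cdot)-\vp$. If such an $x_{0}$ satisfied both $\vp(x_{0})>g(x_{0})$ and $\mathbf{G}\vp(x_{0})>0$, condition (b) of Corollary~\ref{coro2} would give $\mathbf{F}\vp(x_{0})>-\infty$, and the standing hypothesis $\mathbf{F}=\mathbf{F}_{*}$ on $\{\mathbf{F}>-\infty\}$ would upgrade this to $\mathbf{F}_{*}\vp(x_{0})>-\infty$—contradicting the previous inequality. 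Hence $\min\{\vp-g,\mathbf{G}\vp\}\leq 0$ at $x_{0}$.

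For the super-solution property, Theorem~\ref{thm: optimal control-subhedging} already delivers $u^{-}_{\co}(T-,\cdot)\geq g$, so the only remaining task is $\mathbf{G}\vp(x_{0})\geq 0$ at each (WLOG strict) smooth local minimum of $u^{-}_{\co}(T-,\cdot)-\vp$. I plan to assume $\mathbf{G}\vp(x_{0})<0$ for contradiction and perturb with
\[
\Psi_{\eps,n}(t,x):=\vp(x)+\eps\lvert x-x_{0}\rvert^{2}-n(T-t),
\]
so that $-\partial_{t}\Psi_{\eps,n}\equiv -n$. The liminf definition $u^{-}_{\co}(T-,x_{0})=\vp(x_{0})$, together with a standard localization on a cylinder about $(T,x_{0})$, produces, for each fixed small $\eps$, minimizers $(t_{\eps,n},x_{\eps,n})\in\Di$ of $u^{-}_{\co}-\Psi_{\eps,n}$ with $(t_{\eps,n},x_{\eps,n})\to(T,x_{0})$ as $n\to\infty$. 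After a constant shift so that $\Psi_{\eps,n}$ touches $u^{-}_{\co}$ from below at $(t_{\eps,n},x_{\eps,n})$, the interior super-solution inequality supplied by Theorem~\ref{thm: optimal control-subhedging} and Corollary~\ref{coro2}, combined with $u^{-}_{\co}\geq g$, yields
\[
\max\bigl\{-n+\mathbf{F}\Psi_{\eps,n}(t_{\eps,n},x_{\eps,n}),\ \mathbf{G}\Psi_{\eps,n}(t_{\eps,n},x_{\eps,n})\bigr\}\geq 0.
\]
The USC of $\mathbf{F}$ (as the infimum of continuous maps) keeps $\mathbf{F}\Psi_{\eps,n}(t_{\eps,n},x_{\eps,n})$ locally bounded above, so the first argument tends to $-\infty$; the USC of $\mathbf{G}$ (hypothesized in Corollary~\ref{coro2}) drives the second to $\mathbf{G}\vp(x_{0})+O(\eps)$, which is strictly negative after sending $n\to\infty$ and then $\eps\downarrow 0$. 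This contradicts the displayed inequality.

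\emph{Part 2 (uniqueness).} Extend $u^{\pm}_{\co}$ to $\D$ by $\bar u^{\pm}(t,x):=u^{\pm}_{\co}(t,x)$ for $t<T$ and $\bar u^{\pm}(T,x):=\hat g(x)$. Part~1 makes $\bar u^{+}$ USC and $\bar u^{-}$ LSC on $\D$ with matching terminal trace, while Theorem~\ref{thm: optimal control-subhedging} and Corollary~\ref{coro2} identify them as a sub- and a super-solution on $\Di$ of the full PDE. The assumed full comparison principle then gives $\bar u^{+}\leq\bar u^{-}$ on $\Di$; combined with $u^{-}_{\co}\leq u_{\co}\leq u^{+}_{\co}$ from \eqref{eq:intfvmavp-subhedging} this forces $u^{+}_{\co}=u_{\co}=u^{-}_{\co}$ on all of $\D$, and Lemma~\ref{eq:equivalence_application} identifies this common continuous viscosity solution with $\mathbf{u}_{\co}$.

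\emph{Main obstacle.} The technical heart is the face-lifting argument in Part~1. The penalty parameters must be coordinated so that $(t_{\eps,n},x_{\eps,n})\to(T,x_{0})$ (controlled by $n$), the spatial penalty localizes without destroying the strictness of the test minimum (controlled by $\eps$, strengthened to a quartic $\eps\lvert x-x_{0}\rvert^{4}$ if the minimum is only weakly strict), and the nonlocal contributions that $\eps\lvert x-x_{0}\rvert^{2}$ injects into $\mathbf{F}$ and $\mathbf{G}$ vanish after first letting $n\uparrow\infty$ and then $\eps\downarrow 0$.
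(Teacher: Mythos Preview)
The paper omits the proof of this corollary entirely, so there is nothing to compare against; your overall strategy (boundary sub/super-solution plus comparison, then interior comparison) is exactly the expected one, and your Part~1 sub-solution argument and Part~2 are correct.

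There is, however, a genuine gap in the super-solution half of Part~1. With your test function $\Psi_{\eps,n}(t,x)=\vp(x)+\eps|x-x_{0}|^{2}-n(T-t)$ one has
\[
u^{-}_{\co}-\Psi_{\eps,n}=u^{-}_{\co}(t,x)-\vp(x)-\eps|x-x_{0}|^{2}+n(T-t),
\]
and the term $+n(T-t)$ is nonnegative for $t<T$ and vanishes at $t=T$. Hence over any cylinder $[T-\delta,T]\times\overline{B_{r}(x_{0})}$ (using the LSC extension at $\{T\}$) this penalty pushes the minimizer \emph{toward} $\{T\}$, not into $\Di$; your claim that $(t_{\eps,n},x_{\eps,n})\in\Di$ is therefore unjustified. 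This is structural: you need $\partial_{t}\Psi$ large and positive to drive $-\partial_{t}\Psi+\mathbf{F}\Psi$ to $-\infty$, but precisely this makes $u^{-}_{\co}-\Psi$ larger for $t<T$ and forces the minimizer to the terminal slice. In addition, for a super-solution test the spatial perturbation should be subtracted (e.g.\ $-|x-x_{0}|^{4}$) so that the test function remains below $u^{-}_{\co}(T-,\cdot)$; your $+\eps|x-x_{0}|^{2}$ goes the wrong way and may lift the test function above $u^{-}_{\co}(T-,\cdot)$ near $x_{0}$.

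A repair that avoids this tension is to drop the $n$-penalty and instead minimize $u^{-}_{\co}(t,x)-\vp(x)+|x-x_{0}|^{4}$ over cylinders $[s_{k},T]\times\overline{B_{r}(x_{0})}$ along an approximating sequence $(s_{k},y_{k})\to(T,x_{0})$ with $u^{-}_{\co}(s_{k},y_{k})\to\vp(x_{0})$; one checks that minimizers $(t_{k},x_{k})$ converge to $(T,x_{0})$, and at any $(t_{k},x_{k})$ with $t_{k}<T$ the interior super-solution inequality gives $\max\{\mathbf{F},\mathbf{G}\}\geq 0$. If the first alternative holds along a subsequence then $\mathbf{F}\vp(x_{0})>-\infty$ by upper semicontinuity of $\mathbf{F}$, whence $\mathbf{G}\vp(x_{0})\geq 0$ by condition~(a); if the second holds, the USC of $\mathbf{G}$ gives the same conclusion directly. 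Either way you contradict $\mathbf{G}\vp(x_{0})<0$. The remaining case $t_{k}=T$ for all large $k$ requires one further (standard) localization step.
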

\begin{remark}
To get a comparison principle, we can adopt the proof in \cite{Pham1998} appropriately like in \cite{Bayraktar_Huang}.
\end{remark}
\appendix
\section*{Appendix A}\label{sec:appendix}
\renewcommand{\thesection}{A}
\renewcommand{\thetheorem}{A.\arabic{theorem}}
\noindent  We provide sufficient conditions for the nonemptiness of $\bU^+_{\unco}$, $\bU^-_{\unco}$, $\bU^{+}_{\co}$ and $\bU^{-}_{\co}$. 
\begin{assumption}\label{assump: g bounded}
$g$ is bounded.
\end{assumption}
 \begin{assumption} \label{assump: existence of no_investing_strategy}
 There exists $u_0 \in U$ such that $\sigma_Y(t,x,y,u_0)=0$ and $b(t,x,y,u_0(e),e)=0$ for all $(t,x,y,e)\in \D\times \mathbb{R}\times E$.
 \end{assumption}
 \begin{proposition}\label{prop: U^+ is not empty}
 Under Assumptions  \ref{assump: lambda intensity kernel}, \ref{assump: regu_on_coeff}, \ref{assump: g bounded} and \ref{assump: existence of no_investing_strategy}, $\bU^{+}_{\unco}$ and $\bU^{-}_{\co}$ are not empty.
 \end{proposition}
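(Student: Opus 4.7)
The plan is to construct explicit candidates in $\bU^{+}_{\unco}$ and $\bU^{-}_{\co}$ built from the ``no-risk'' control $u_0$. Set $M:=\|g\|_{\infty}$ and $K_0:=\|u_0\|_{U}$. The crux of Assumption~\ref{assump: existence of no_investing_strategy} is that under the constant process $\tilde{\nu}\equiv u_0$, both the diffusion and jump parts of $Y$ vanish, so $Y$ is absolutely continuous with
\begin{equation*}
dY(s)=\mu_{Y}(s,X(s),Y(s),u_0)\,ds,\qquad |\mu_{Y}(\cdot,\cdot,y,u_0)|\leq L(1+|y|+K_0),
\end{equation*}
by Assumption~\ref{assump: regu_on_coeff}. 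Furthermore, since $b(\cdot,\cdot,\cdot,u_0(e),e)\equiv 0$, the admissibility conditions \eqref{eq:admissibility_super} and \eqref{eq:admissibility_sub} hold trivially for $\tilde{\nu}$, so any concatenation $\nu\otimes_{\tau}\tilde{\nu}$ also lies in $\Uc^{t}_{\unco}$ (resp.\ $\Uc^{t}_{\co}$).

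For $\bU^{+}_{\unco}$, I would take
\begin{equation*}
w^{+}(t,x)\;:=\;(M+1+K_0)\,e^{L(T-t)}-(1+K_0),
\end{equation*}
which is continuous, independent of $x$ (hence trivially of polynomial growth), satisfies $w^{+}(t,x)\geq w^{+}(T,x)=M\geq g(x)$ and $w^{+}\geq M\geq 0$, and solves the backward ODE $(w^{+})'(t)=-L(1+w^{+}(t)+K_0)$. Given $(t,x,y)\in\D\times\R$, $\tau\in\mathcal{T}_t$ and $\nu\in\Uc^{t}_{\unco}$, choose $\tilde{\nu}\equiv u_0$. On the event $A:=\{Y(\tau)\geq w^{+}(\tau,X(\tau))\}$, pathwise both $Y$ and $w^{+}$ are $C^{1}$ on $[\tau,T]$, with $Y'(s)\geq f(s,Y(s)):=-L(1+|Y(s)|+K_0)$ and $(w^{+})'(s)=f(s,w^{+}(s))$ (the latter using $w^{+}\geq 0$). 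Because the map $y\mapsto f(s,y)-Ly$ is non-increasing, i.e., $f$ is one-sided Lipschitz from above, the classical scalar ODE comparison theorem yields $Y(s)\geq w^{+}(s)$ on $A$ for all $s\in[\tau,T]$, in particular for every $\rho\in\mathcal{T}_{\tau}$. This verifies $w^{+}\in\bU^{+}_{\unco}$.

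For $\bU^{-}_{\co}$, set $w^{-}(t,x):=-w^{+}(t,x)$, so $w^{-}$ is continuous, satisfies $w^{-}(T,x)=-M\leq g(x)$, and $w^{-}\leq 0$. Given $(t,x,y)$, $\tau\in\mathcal{T}_t$ and $\nu\in\Uc^{t}_{\co}$, pick $\rho:=T$ and $\tilde{\nu}\equiv u_0$. Substituting $\tilde{Y}:=-Y$, the previous comparison applied to $\tilde{Y}$ versus $w^{+}$ gives $Y(s)\leq w^{-}(s)$ for every $s\in[\tau,T]$ on $\{Y(\tau)\leq w^{-}(\tau,X(\tau))\}$, and in particular $Y(T)\leq w^{-}(T)=-M\leq g(X(T))$ on this event. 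Hence $w^{-}\in\bU^{-}_{\co}$.

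The main technical step is the pathwise ODE comparison: since the drift bound $L(1+|y|+K_0)$ is uniform in $(s,x)$, for each $\omega$ the $Y$-equation is a genuine scalar ODE that can be compared to the deterministic $w^{\pm}$ via a routine Gronwall-type argument applied on the region where the sign of the solution is constant. Once this is in place, all other requirements in Definitions~\ref{def: Stochasticsuper-solution-super} and \ref{def: Stochasticsub-solution_subhedging} are immediate from the explicit formulas, and checking admissibility of $\nu\otimes_{\tau}\tilde{\nu}$ reduces to the trivial vanishing of $b(\cdot,u_0(e),e)$.
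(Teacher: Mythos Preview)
Your argument is correct and shares the paper's core idea: pick the constant control $\tilde\nu\equiv u_0$ so that $Y$ becomes an absolutely continuous process with drift bounded by $L(1+|Y|+K_0)$, and compare it pathwise against a deterministic function of $t$ alone via Gr\"onwall. The paper, however, carries this out in two steps: first it assumes $\mu_Y$ is non-decreasing in $y$ and builds $w(t)=\gamma-e^{kt}$ (the monotonicity is used to control the sign of $\xi=\mu_Y(\cdot,V,\cdot)-\mu_Y(\cdot,Y,\cdot)$ in the Gr\"onwall estimate), and then removes that hypothesis by the exponential change of variable $\widetilde Y=e^{ct}Y$. Your route bypasses this reduction entirely: because you compare $Y$ not to the true drift but to the envelope $f(y)=-L(1+|y|+K_0)$, which is globally Lipschitz in $y$, the scalar ODE comparison goes through directly without any monotonicity assumption on $\mu_Y$, and the mirror argument with $-Y$ immediately handles $\bU^{-}_{\co}$. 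The trade-off is that the paper's transformation trick is reusable in the companion Proposition~\ref{Prop: U^- is not empty} (where the drift-plus-compensator is the relevant quantity), whereas your comparison is tailored to the pure-ODE situation created by $u_0$; but for the present statement your version is the cleaner of the two.
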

\begin{proof} We will only show $\bU^{+}_{\unco}$ is not empty. A very similar proof applies to $\bU^{-}_{\co}$. \\
\noindent\textbf{Step 1.}
In this step we assume that $\mu_{Y}$ is non-decreasing in its $y$-variable. We will show that $w(t,x)=\gamma-e^{kt}$ is a stochastic super-solution for some choice of $k$ and $\gamma$. 

By the linear growth condition on $\mu_Y$ in Assumption \ref{assump: regu_on_coeff}, there exists $L>0$ such that $$|\mu_Y(t,x,y,u_0)|\leq  L(1+|y|),$$ where $u_0$ is the element in $U$ in Assumption \ref{assump: existence of no_investing_strategy}. Choose $k\geq 2L$ and $\gamma$ such that $-e^{k T}+\gamma\geq \|g\|_{\infty}$. 
 Then $w(t,x)\geq w(T,x)\geq g(x)$ for all $(t,x)\in\D$. It suffices to show that for any $(t,x,y)\in \D\times\mathbb{R}$, $\tau\in\mathcal{T}_t$, $\nu\in \Uc^t_{\unco}$ and $\rho\in \mathcal{T}_{\tau}$,
 \begin{equation}\label{eq: property of stochastic super-solution}
  Y(\rho )\geq w(\rho, X(\rho )) \;\; \mathbb{P}\text{-a.s.}\;\; \text{on}\;\;\{Y(\tau )\geq w(\tau, X(\tau))\}, \text{where } X:= X_{t,x}^{\nu\otimes_{\tau}u_0}, Y:=Y_{t,x,y}^{\nu\otimes_{\tau}u_0}.
 \end{equation}
Let $A= \{Y(\tau )> w(\tau, X(\tau))\}$, $V(s)=w(s,X(s))$ and $\Gamma(s)=\left(V(s)-Y(s)\right)\mathbbm{1}_{A}.$ Therefore, for $s\geq \tau$, 
\begin{gather}\label{eq: Gamma_integral}
dY(s)= \mu_{Y}\left(s,X(s),Y(s), u_0\right)ds, \;dV(s)= -ke^{ks}ds, \;
\Gamma(s)=\mathbbm{1}_{A}\int_{\tau}^{s} ( \xi(q)+ \Delta(q) ) dq + \mathbbm{1}_{A}\Gamma(\tau) , \text{where} \\
\Delta(s):=-ke^{ks}-\mu_Y(s,X(s),V(s),u_0)\leq -ke^{ks}-\mu_Y(s,X(s),-e^{ks},u_0)\leq -ke^{ks}+L(1+e^{ks})\leq 0, \nonumber\\
\xi(s):=\mu_{Y}(s,X(s),V(s),u_0) -\mu_{Y}(s,X(s),Y(s),u_0). \nonumber
\end{gather}
Therefore, from \eqref{eq: Gamma_integral} and the definitions of $\Gamma$ and $A$, it holds that
$$
\Gamma(s)\leq \mathbbm{1}_A\int_{\tau}^{s} \xi(q) dq \;\; \text{and}\;\; \Gamma^{+}(s)\leq \mathbbm{1}_A\int_{\tau}^{s} \xi^{+}(q) dq 
 \;\; \text{for} \;\; s\geq \tau.$$ 
From the Lipschitz continuity of $\mu_Y$ in $y$-variable in Assumption \ref{assump: regu_on_coeff}, 
\begin{equation*}\label{eq: fun_gronwall}
\Gamma^{+}(s)\leq \mathbbm{1}_A \int_{\tau}^{s} \xi^{+}(q) dq \leq \int_{\tau}^{s} L_0 \Gamma^{+}(q) dq  \;\; \text{for} \;\; s\geq \tau,
\end{equation*} 
where $L_0$ is the Lipschitz constant of $\mu_Y$ with respect to $y$. Note that we use the assumption that $\mu_Y$ is non-decreasing in its $y$-variable to obtain the second inequality.
Since $\Gamma^+(\tau)=0$, an application of Gr\"{o}nwall's Inequality implies that
 $\Gamma^+(\rho)\leq 0$, which further implies that \eqref{eq: property of stochastic super-solution} holds. \\
\textbf{Step 2.} We get rid of our assumption on $\mu_{Y}$ from Step 1 by following a proof similar to those in \cite{BayraktarLi} and \cite{Bouchard_Nutz_TargetGames}. For $c>0$, define $\widetilde Y_{t,x,y}^{\nu}$ as the strong solution of
\begin{equation*}
\begin{split}
  d\widetilde{Y}(s)&=\tilde \mu_{Y}(s,X_{t,x}^{\nu}(s),\widetilde{Y}(s),\nu(s)) ds +\tilde \sigma_{Y}^{\top}(s,X_{t,x}^{\nu}(s),\widetilde{Y}(s),\nu(s))dW_{s} \\ &+ \int_{E} \widetilde{b}^{\top}(s,X_{t,x}^{\nu}(s-),\widetilde{Y}(s-), \nu_1(s),\nu_2(s,e), e)\lambda(ds,de)
  \end{split}
\end{equation*}
  with initial data $\widetilde{Y}(t)=y$, where
  \begin{equation*}
  \begin{array}{c}
   \widetilde{\mu}_{Y}(t,x,y,u):= c y+e^{ct} \mu_{Y}(t,x,e^{-c t}y,u), \;
  \widetilde{\sigma}_{Y}(t,x,y,u):= e^{c t} \sigma_{Y}(t,x,e^{-c t} y,u), \\
  \widetilde{b}(t,x,y,u(e),e):= e^{c t} b(t,x,e^{-c t} y,u(e),e). 
  \end{array}
  \end{equation*}
 Therefore,  
 \begin{equation}\label{eq:trans to increasing}
\widetilde{Y}_{t,x,y}^{\nu}(s)e^{-cs}=Y_{t,x,ye^{-ct}}^{\nu}(s), \;t\leq s\leq T. 
 \end{equation}
 Let 
  \begin{equation}\label{eq:new super stg}
  \tilde u_{\unco}(t,x)= \inf\{y\in \R: \exists \; \nu\in \mathcal{U}^t_{\unco}, \mbox{  s.t.}\; \widetilde{Y}^{\nu}_{t,x,y}(\rho)\ge \tilde g(\rho, X^{\nu}_{t,x}(\rho))\;\as\},
  \end{equation}
 where $\tilde{g}(t, x)=e^{ct} g(x)$. Therefore, from \eqref{eq:trans to increasing}, $\tilde{u}_{\unco}(t,x)=e^{ct}u_{\unco}(t,x).$ Since $\mu_{Y}$ is Lipschitz in $y$, we can choose $c>0$ so that
$
 \widetilde {\mu}_{Y}: (t,x,y,u) \mapsto  cy +  e^{c t}\mu_{Y}(t,x,e^{-c t}y,u)
$
is non-decreasing in $y$. Moreover, all the properties of $\widetilde{\mu}_{Y}, \widetilde{\sigma}_{Y}$ and $\widetilde{b}$ in Assumption \ref{assump: regu_on_coeff} still hold. We replace $\mu_Y$, $\sigma_Y$ and $b$ in all of the equations and definitions in Section \ref{sec:prob} with  $\widetilde{\mu}_{Y}, \widetilde{\sigma}_{Y}$ and $\widetilde{b}$, we get $\widetilde{H}^*$ and $\widetilde{H}_*$. Let $\widetilde{\bU}^+_{\unco}$ be the set of stochastic super-solutions of the new target problem \eqref{eq:new super stg}. It is easy to see that $w\in\bU^+_{\unco}$ if and only if $\widetilde{w}(t,x):=e^{ct}w(t,x)\in \widetilde{\bU}^+_{\unco}$. From Step 1, $\widetilde{\bU}^+_{\unco}$ is not empty. Thus, $\bU^+_{\unco}$ is not empty. 
\end{proof}
 \begin{assumption}\label{assump: linear growth in y}
 There is $C\in \R$ such that for all $(t,x,y,u,e)\in\D\times\R\times U\times E$,
 $$\left|\mu_Y(t,x,y,u)+\int_E b^{\top}(t,x,y,u(e),e) m(de)\right|\leq C(1+|y|).$$
 \end{assumption}
\begin{proposition}\label{Prop: U^- is not empty}
Under Assumptions \ref{assump: lambda intensity kernel}, \ref{assump: regu_on_coeff}, \ref{assump: g bounded} and \ref{assump: linear growth in y}, $\mathbb{U}_{\unco}^{-}$ and $\bU^{+}_{\co}$ are not empty.
\end{proposition}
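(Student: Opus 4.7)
The plan is to prove the two non-emptiness statements separately, with $\bU^{-}_{\unco}$ being essentially automatic and $\bU^{+}_{\co}$ carrying all of the content.

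For $\bU^{-}_{\unco}$, the asymmetry in Definition \ref{def: Stochasticsub-solution-super} is decisive: condition (2) only asks for the \emph{existence} of some $\rho\in\mathcal{T}_{\tau}$ with $\mathbb{P}(Y(\rho)<g(X(\rho))\mid B)>0$. Taking the constant $w\equiv -\|g\|_{\infty}-1$ (continuous, of polynomial growth, with $w(T,x)\le g(x)$), the choice $\rho=\tau$ yields $Y(\rho)=Y(\tau)<-\|g\|_{\infty}-1\le g(X(\rho))$ on $B$, so in fact $\mathbb{P}(Y(\rho)<g(X(\rho))\mid B)=1$. This step uses only Assumption \ref{assump: g bounded}.

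For $\bU^{+}_{\co}$, by contrast, Definition \ref{def: Stochasticsuper-solution_subhedging} quantifies over \emph{all} $\rho$, $\nu$ and $B$ simultaneously, so Assumption \ref{assump: linear growth in y} becomes indispensable. My candidate is
\[
w(t,x):=K e^{C(T-t)}-1,\qquad K>\max\bigl(e^{CT},\,\|g\|_{\infty}+1\bigr),
\]
with $C$ the constant from Assumption \ref{assump: linear growth in y}; continuity, polynomial growth, and $w\ge K-1\ge\|g\|_{\infty}\ge g$ are immediate. The driving object is $N(s):=e^{C(s-\tau)}(Y(s)+1)$, whose It\^{o} expansion has $ds$-drift
\[
e^{C(s-\tau)}\Bigl[C(Y(s-)+1)+\mu_{Y}(s,X(s-),Y(s-),\nu(s))+\int_{E}b^{\top}(s,X(s-),Y(s-),\nu_{1}(s),\nu_{2}(s,e),e)\,m(de)\Bigr].
\]
On $\{Y(s-)>0\}$ the bound $|\mu_{Y}+\int_{E}b^{\top}m(de)|\le C(1+|y|)$ of Assumption \ref{assump: linear growth in y} gives $\mu_{Y}+\int_{E}b^{\top}m(de)\ge -C(1+Y(s-))$, which exactly cancels the $C(Y(s-)+1)$ term and forces the drift to be $\ge 0$. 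Setting $\sigma_{0}:=\inf\{s\ge\tau:Y(s)\le 0\}\wedge\rho$, the process $N$ is therefore a local sub-martingale on $[\tau,\sigma_{0}]$ along paths of $B$ (where $Y(\tau)>w(\tau,X(\tau))\ge 0$). Granting the optional sampling inequality $\mathbb{E}[N(\sigma_{0})\mathbbm{1}_{B}]\ge\mathbb{E}[N(\tau)\mathbbm{1}_{B}]$, the argument closes by contradiction: $N(\tau)=Y(\tau)+1>Ke^{C(T-\tau)}=:c_{0}$ strictly on $B$, but if $\mathbb{P}(Y(\rho)>w(\rho,X(\rho))\mid B)=0$ then $N(\sigma_{0})\le c_{0}$ a.s.\ on $B$ (either $\sigma_{0}<\rho$, in which case $Y(\sigma_{0})\le 0$ combined with $K>e^{CT}$ forces $N(\sigma_{0})\le e^{CT}<c_{0}$; or $\sigma_{0}=\rho$, in which case the hypothesis gives $Y(\rho)+1\le Ke^{C(T-\rho)}$ and hence $N(\sigma_{0})\le c_{0}$), contradicting the sub-martingale inequality.

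The hard part of the plan is therefore the passage from local to true sub-martingale on $[\tau,\sigma_{0}]$ that legitimizes the above optional sampling. My strategy is the standard localization argument: for the fixed $\nu\in\Uc^{t}_{\co}$, the SDE moment estimates referenced in Remark \ref{remark: facts from assumption 1}, together with Assumption \ref{assump: regu_on_coeff} and the $L^{2}$-admissibility built into $\Uc^{t}_{\co}$, yield $\mathbb{E}[\sup_{s\in[\tau,T]}|Y(s)|^{2}]<\infty$, and this dominates $\sup_{s\in[\tau,\sigma_{0}]}N(s)$. Combining with a localizing sequence $\tau_{n}\uparrow\infty$ along which $N(\cdot\wedge\tau_{n})$ is a true sub-martingale on $[\tau,\sigma_{0}\wedge\tau_{n}]$, and invoking dominated convergence, delivers the inequality. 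This integrability step is the only nontrivial analytical ingredient; everything else reduces to the routine It\^{o} computation above.
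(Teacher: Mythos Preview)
Your argument is correct and takes a genuinely different route from the paper's. The paper chooses to prove the $\bU^{-}_{\unco}$ case in full and then declare $\bU^{+}_{\co}$ symmetric: it first reduces to the case where $y\mapsto\mu_{Y}+\int_{E}b^{\top}m(de)$ is non-decreasing (via the exponential change of variable of Proposition~\ref{prop: U^+ is not empty}, Step~2), takes $w(t,x)=e^{kt}-\gamma$, shows that $\widetilde{M}(\cdot)=\bigl(Y-V-\int_{\tau}^{\cdot}\xi\bigr)\mathbbm{1}_{A}$ is a super-martingale, and then combines the resulting strict inequality on a non-null subset with a pathwise Gr\"onwall argument on $\Gamma^{+}$. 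Because this argument in fact works for \emph{every} $\rho\in\mathcal{T}_{\tau}$, it transfers verbatim (with signs reversed) to the $\bU^{+}_{\co}$ case, which is why the paper treats the two claims together.

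Your approach differs in both places. For $\bU^{-}_{\unco}$ you exploit the existential quantifier on $\rho$ in Definition~\ref{def: Stochasticsub-solution-super} and take $\rho=\tau$ with the constant $w\equiv-\|g\|_{\infty}-1$; this is strictly simpler than the paper's proof and uses only Assumption~\ref{assump: g bounded}. For $\bU^{+}_{\co}$, where the quantifier is universal and no such shortcut exists, your Lyapunov-type weight $N(s)=e^{C(s-\tau)}(Y(s)+1)$ absorbs the drift bound of Assumption~\ref{assump: linear growth in y} directly, so you avoid both the monotonicity reduction and the Gr\"onwall step. The trade-off is that you rely on the $L^{2}$ moment bound $\mathbb{E}\bigl[\sup_{s}|Y(s)|^{2}\bigr]<\infty$ to pass from a local to a true sub-martingale; this is indeed a consequence of Assumption~\ref{assump: regu_on_coeff} and the $L^{2}$-integrability built into $\Uc^{t}_{\co}$, and the paper needs the same estimate implicitly when it asserts that ``$M$ is a martingale after $\tau$''. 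One cosmetic point: in your case $\sigma_{0}<\rho$ the bound should read $N(\sigma_{0})\le e^{C(\sigma_{0}-\tau)}\le e^{C(T-\tau)}<Ke^{C(T-\tau)}=c_{0}$, and the borderline case $\sigma_{0}=\rho$ with $Y(\rho)\le 0$ is covered by the same inequality; your conclusion is unaffected.
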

 \begin{proof} We will only show that $\mathbb{U}_{\unco}^{-}$ is not empty. Assume that $$\mu_{Y}(t,x,y,u)+\int_E b^{\top}(t,x,y,u(e),e)m(de)$$ is non-decreasing in its $y$-variable. We could remove this assumption by using the argument from previous proposition. 

Choose $k\geq 2C$ ($C$ is the constant in Assumption \ref{assump: linear growth in y}) and $\gamma>0$ such that $e^{k T}-\gamma<- \|g\|_{\infty}$. Let $w(t,x)=e^{kt}-\gamma$. Notice that $w$ is continuous, has polynomial growth in $x$ and $w(T,x)\leq g(x)$ for all $x\in\R^{d}$. It suffices to show that for any $(t,x,y)\in \D\times\mathbb{R}$, $\tau\in\mathcal{T}_{t}$ and $\nu\in \Uc^t_{\unco}$,  there exists $\rho\in\mathcal{T}_{t}$ such that
$\mathbb{P}(Y(\rho )< g( X(\rho ))|B)>0$
for $B\subset \{Y(\tau)<w(\tau,X(\tau))\}$ satisfying $B\in\mathcal{F}_\tau^t$ and $\P(B)>0$, where $X:= X_{t,x}^{\nu}$ and $Y:=Y_{t,x,y}^{\nu}$. Define
 \begin{equation*}
 \begin{gathered}
M(\cdot)=Y(\cdot)-\int_{\tau}^{\cdot}K(s)ds,\; V(s)=w(s,X(s)),\;
 A= \{Y(\tau )<w(\tau, X(\tau))\},\; \Gamma(s)=\left(Y(s)-V(s)\right)\mathbbm{1}_{A}, \\
\text{where } K(s):=\mu_{Y}(s,X(s),Y(s),\nu(s))+\int_{E}b^{\top}(s,X(s-),Y(s-),\nu_1(s),\nu_2(s,e),e)m(de),\\
 \widetilde{K}(s):=\mu_{Y}(s,X(s),V(s),\nu(s))+\int_{E}b^{\top}(s,X(s-),V(s-),\nu_1(s),\nu_2(s,e),e)m(de).
 \end{gathered}
 \end{equation*}
 It is easy to see that $M$ is a martingale after $\tau.$  Due to the facts that $A\in\mathcal{F}_\tau^t$ and $dV(s)= ke^{ks}ds$, we further know
 \begin{equation}\label{eq: supermar1_nonemptyness of U+}
  \mathbbm{1}_{A}\left(Y(\cdot)-V(\cdot)+\int_{\tau}^{\cdot} ke^{ks}-K(s) ds \right)\;\; \text{is a martingale after}\;\;\tau .
 \end{equation} 
Since Assumption \ref{assump: linear growth in y} holds and $\mu_{Y}(t,x,y,u)+\int_E b^{\top}(t,x,y,u(e),e)m(de)$ is non-decreasing in $y$,
$$
\widetilde{K}(s)\leq \mu_Y(s,X(s),e^{ks}, \nu(s))+\int_{E}b^{\top}(s,X(s-),e^{ks},\nu_1(s),\nu_2(s,e),e)m(de)\leq 2C e^{ks}. 
$$
Therefore, it follows from \eqref{eq: supermar1_nonemptyness of U+}, the inequality above and the fact $k\geq 2C$ that 
\begin{equation}\label{eq: supermar2_nonemptyness of U+}
\widetilde{M}(\cdot):=\mathbbm{1}_{A}\left(Y(\cdot)-V(\cdot)-\int_{\tau}^{\cdot}\xi(s)ds)\right) \;\;\text{is a super-martingale after }\tau,
\end{equation}
where $\xi(s):=K(s)-\widetilde{K}(s)$. Since $\widetilde{M}(\tau)<0$ on $B$, there exists a non-null set $F\subset B $ such that $\widetilde{M}(\rho)<0$ on $F$ for any $\rho\in\mathcal{T}_{\tau}$. By the definition of $\widetilde{M}$ in \eqref{eq: supermar2_nonemptyness of U+}, we get
\begin{equation}\label{eq: Gamma_rho_strict_ineq on F}
\Gamma(\rho)< \mathbbm{1}_{A}\int_{\tau}^{\rho}\xi(s)ds \;\;\text{on}\;\;F.
\end{equation}
 Therefore, 
 \begin{equation}\label{eq: ineq for gronwall ineq}
 \Gamma^{+}(\rho)\leq \mathbbm{1}_A\int_{\tau}^{\rho} \xi^{+}(s) ds 
\leq \int_{\tau}^{\rho} L_0 \Gamma^{+}(s) ds\;\;\text{on}\;\;F. 
\end{equation}
  By Gr\"{o}nwall's Inequality,  $\Gamma^+(\tau)=0$ implies that
  $\Gamma^+(\rho)=0$ on $F$. More precisely, for $\omega\in F$ ($\P-\text{a.s.}$), $\Gamma^{+}(s)(\omega)=0$ for $s\in [\tau(\omega),\rho(\omega)]$. This implies that we can replace the inequalities with equalities in \eqref{eq: ineq for gronwall ineq}. Therefore, by \eqref{eq: Gamma_rho_strict_ineq on F}, $\Gamma(\rho)<0$ on $F$, which yields  
 $\mathbb{P}(Y(\rho )< g( X(\rho ))|B)>0.$ 
\end{proof}

\section*{Appendix B}\label{sec:appendixB}
\renewcommand{\thesection}{B}
\renewcommand{\thetheorem}{B.\arabic{theorem}}
\noindent 
Let $T$ be a finite time horizon, given a general probability space $(\Omega, \mathcal{F},\mathbb{P})$ endowed with a
filtration $\mathbb{F} = \{\mathcal{F}_t\}_{0\leq t \leq T}$ satisfying the usual conditions. Let $\mathcal{T}_t$ be the set of $\mathbb{F}$-stopping times valued in $[t,T]$. In particular, let $\mathcal{T}:=\mathcal{T}_0$. We assume that $\mathcal{F}_0$ is trivial. Let us consider an optimal control problem defined as follows. Let $\mathcal{U}$ be the collection of all $\mathbb{F}$-predictable processes valued in $U\subset\R^k$ and $\{G^{\nu},\nu\in\mathcal{U}\}$ be a collection of bounded, right-continuous processes valued in $\R$. 
Given $(t,\nu)\in[0,T]\times\mathcal{U}$, we consider two optimal stopping control problems:
\begin{equation}
V_{\unco}^{\nu}(t)=\essinf_{\mu\in\mathcal{U}(t,\nu)}\esssup_{\tau\in\mathcal{T}_t}\mathbb{E}[G^{\mu}(\tau)|\mathcal{F}_t], 
\end{equation}
and
\begin{equation}
V_{\co}^{\nu}(t)=\esssup_{\mu\in\mathcal{U}(t,\nu)}\esssup_{\tau\in\mathcal{T}_t}\mathbb{E}[G^{\mu}(\tau)|\mathcal{F}_t], 
\end{equation}
where $\mathcal{U}(t,\nu)=\{\mu\in\mathcal{U}, \mu= \nu\;\text{on}\;[0,t]\;\;\mathbb{P}-\text{a.s.}\}$.
\begin{lemma}\label{lem:stochastic_target_representation_superhedging}
Given $t\in[0,T]$ and $\nu\in\Uc_{t}$, let $\mathcal{M}$ be any family of martingales which satisfies the following:
\begin{equation}\label{eq: property of martinagle set_appendix_superhedging}
\begin{array}{c}
 \text{For any}\;\; \mu\in\mathcal{U}(t,\nu),\;\text{there exists an}\;\; M\in\mathcal{M} \;\text{such that}\;\; \\ \esssup_{\tau\in\mathcal{T}_t}\mathbb{E}[G^{\mu}(\tau)|\mathcal{F}_t] + M(\rho)-M(t)\geq G^{\mu}(\rho)\;\;\text{for all }\rho\in\mathcal{T}_t.
\end{array}
\end{equation}
 Then $V_{\unco}^{\nu}(t)=Y_{\unco}^{\nu}(t),$ where
\begin{equation*}
Y_{\unco}^{\nu}(t)=\essinf\left\{ Y\in L^1(\Omega, \mathcal{F}_t, \mathbb{P})\;|\; \exists (M,\mu)\in \mathcal{M}\times\mathcal{U}(t,\nu), \text{s.t.}\;  Y+M(\rho)-M(t)\geq G^{\mu}(\rho)\;\;\text{for all }\rho\in \mathcal{T}_t \right\}.
\end{equation*}
\end{lemma}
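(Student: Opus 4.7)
The plan is to establish equality by proving the two inequalities $Y_{\unco}^{\nu}(t)\le V_{\unco}^{\nu}(t)$ and $Y_{\unco}^{\nu}(t)\ge V_{\unco}^{\nu}(t)$ separately; each follows directly from the definitions once property \eqref{eq: property of martinagle set_appendix_superhedging} and the martingale property are plugged in, so the role of $\mathcal{M}$ and of the two $\essinf/\esssup$ operations is basically organisational.

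For $Y_{\unco}^{\nu}(t)\le V_{\unco}^{\nu}(t)$, I would fix an arbitrary $\mu\in\mathcal{U}(t,\nu)$ and introduce
\[
Y^{\mu}(t):=\esssup_{\tau\in\mathcal{T}_t}\mathbb{E}[G^{\mu}(\tau)|\mathcal{F}_t],
\]
which lies in $L^{1}(\Omega,\mathcal{F}_t,\mathbb{P})$ by boundedness of $G^{\mu}$. Property \eqref{eq: property of martinagle set_appendix_superhedging} produces an $M\in\mathcal{M}$ with $Y^{\mu}(t)+M(\rho)-M(t)\ge G^{\mu}(\rho)$ for every $\rho\in\mathcal{T}_t$, witnessing that $Y^{\mu}(t)$ is admissible in the definition of $Y_{\unco}^{\nu}(t)$. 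Hence $Y_{\unco}^{\nu}(t)\le Y^{\mu}(t)$, and taking $\essinf$ over $\mu\in\mathcal{U}(t,\nu)$ yields the claim.

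For the reverse inequality I would take any admissible $(Y,M,\mu)\in L^{1}(\Omega,\mathcal{F}_t,\mathbb{P})\times\mathcal{M}\times\mathcal{U}(t,\nu)$ in the definition of $Y_{\unco}^{\nu}(t)$, fix $\rho\in\mathcal{T}_t$ and condition $Y+M(\rho)-M(t)\ge G^{\mu}(\rho)$ on $\mathcal{F}_t$. Doob's optional sampling theorem removes the martingale increment, giving $Y\ge\mathbb{E}[G^{\mu}(\rho)|\mathcal{F}_t]$. Passing to $\esssup$ over $\rho$ and then $\essinf$ over $\mu'\in\mathcal{U}(t,\nu)$ produces $Y\ge V_{\unco}^{\nu}(t)$, and a final $\essinf$ over admissible triples finishes the direction.

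The only technical point that deserves attention is the application of optional sampling: it requires $M$ to be a genuine (not merely local) martingale, which is exactly the hypothesis on $\mathcal{M}$, together with $\rho\le T$ bounded, which is automatic from $\rho\in\mathcal{T}_t$. No closure-under-pasting or lattice structure on $\mathcal{M}$ is needed, because each inequality is only of the pointwise ``a specific element dominates the $\essinf$'' type rather than requiring a countable $\essinf$-attaining sequence.
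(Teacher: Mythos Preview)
Your proposal is correct and matches the paper's proof essentially line for line: both directions are obtained exactly as you describe, one by recognizing that $\esssup_{\tau\in\mathcal{T}_t}\mathbb{E}[G^{\mu}(\tau)\mid\mathcal{F}_t]$ is admissible in the definition of $Y_{\unco}^{\nu}(t)$ via \eqref{eq: property of martinagle set_appendix_superhedging}, and the other by conditioning the admissibility inequality on $\mathcal{F}_t$ to kill the martingale increment. Your added remarks on optional sampling and on why no lattice structure is required are correct and slightly more detailed than the paper's presentation, but the underlying argument is identical.
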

\begin{proof}
(1) $Y_{\unco}^{\nu}(t)\geq V_{\unco}^{\nu}(t)$: Fix $Y\in L^1(\Omega, \mathcal{F}_t, \mathbb{P})$ and $(M,\mu)\in\mathcal{M}\times\mathcal{U}(t,\nu)$ such that $$Y+M(\rho)-M(t)\geq G^{\mu}(\rho) \;\;\text{for all }\rho\in\mathcal{T}_t.$$ By taking the conditional
 expectation, we get that
 \begin{equation*}
Y \geq \mathbb{E}[G^{\mu}(\rho)|\mathcal{F}_t]\;\;\text{for all }\rho\in\mathcal{T}_t. 
 \end{equation*}
 which implies that $Y\geq V_{\unco}^{\nu}(t)$. Therefore, $Y_{\unco}^{\nu}(t)\geq V_{\unco}^{\nu}(t)$. \\
(2)  $V_{\unco}^{\nu}(t)\geq Y_{\unco}^{\nu}(t)$: we get from \eqref{eq: property of martinagle set_appendix_superhedging}, for each $\mu\in\mathcal{U}(t,\nu)$, there exists an $M\in\mathcal{M}$ such that
\begin{equation*}
\esssup_{\tau\in\mathcal{T}_t}\mathbb{E}[G^{\mu}(\tau)|\mathcal{F}_t]+M(\rho)-M(t)\geq G^{\mu}(\rho)\text{ for all }\rho\in\mathcal{T}.
\end{equation*}
This implies that $$\esssup_{\tau\in\mathcal{T}_t}\mathbb{E}[G^{\mu}(\tau)|\mathcal{F}_t]\geq Y_{\unco}^{\nu}(t), $$
 which further implies $V_{\unco}^{\nu}(t)\geq Y_{\unco}^{\nu}(t)$. 
\end{proof}
\begin{lemma}\label{lem:stochastic_target_representation_subhedging}
Let $\mathcal{M}$ be any family of martingales which satisfies the following:
\begin{equation}\label{eq: property of martinagle set_appendix_subhedging}
 \text{For any}\;\; \nu\in\mathcal{U} \;\text{and}\;  \rho\in\mathcal{T},\;\text{there exists an}\;\; M\in\mathcal{M} \;\text{such that}\;\; G^{\nu}(\rho) = M(\rho).
\end{equation}
 Then for each $(t,\nu)\in[0,T]\times\mathcal{U}$, $V_{\co}^{\nu}(t)=Y_{\co}^{\nu}(t),$ where
\begin{equation*}
Y_{\co}^{\nu}(t)=\esssup\left\{ Y\in L^1(\Omega, \mathcal{F}_t, \mathbb{P})\;| \exists (M,\mu,\rho)\in \mathcal{M}\times\mathcal{U}(t,\nu)\times\mathcal{T}_t,\text{s.t.}\;\;Y+M(\rho)-M(t)\leq G^{\mu}(\rho) \right\}.
\end{equation*}
\end{lemma}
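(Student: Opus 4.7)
The plan is to establish $V_{\co}^{\nu}(t) = Y_{\co}^{\nu}(t)$ by proving the two inequalities separately, in direct analogy with the proof of Lemma~\ref{lem:stochastic_target_representation_superhedging}, adapted to the cooperative (double essential supremum) setting. The argument is noticeably simpler than in the non-cooperative case because no Snell-envelope construction is needed: hypothesis \eqref{eq: property of martinagle set_appendix_subhedging} directly produces a martingale whose value at $\rho$ equals $G^{\mu}(\rho)$ exactly, rather than merely dominating it.

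For the upper bound $Y_{\co}^{\nu}(t) \leq V_{\co}^{\nu}(t)$, I would fix any $Y \in L^1(\Omega, \mathcal{F}_t, \mathbb{P})$ and admissible triple $(M, \mu, \rho) \in \mathcal{M} \times \mathcal{U}(t, \nu) \times \mathcal{T}_t$ satisfying $Y + M(\rho) - M(t) \leq G^{\mu}(\rho)$. Taking conditional expectations given $\mathcal{F}_t$ and invoking optional sampling (which applies because $G^{\mu}$ is bounded and hence the relevant stopped martingales are uniformly integrable) gives $\mathbb{E}[M(\rho) - M(t) \mid \mathcal{F}_t] = 0$, so that $Y \leq \mathbb{E}[G^{\mu}(\rho) \mid \mathcal{F}_t] \leq V_{\co}^{\nu}(t)$. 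Passing to the essential supremum over all such admissible $Y$ concludes this direction.

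For the lower bound $V_{\co}^{\nu}(t) \leq Y_{\co}^{\nu}(t)$, I would fix arbitrary $\mu \in \mathcal{U}(t,\nu)$ and $\rho \in \mathcal{T}_t$, and use \eqref{eq: property of martinagle set_appendix_subhedging} to produce a martingale $M \in \mathcal{M}$ with $M(\rho) = G^{\mu}(\rho)$. Setting $Y := M(t)$, the martingale property together with optional sampling gives that $Y$ is $\mathcal{F}_t$-measurable, lies in $L^1$, and equals $\mathbb{E}[G^{\mu}(\rho) \mid \mathcal{F}_t]$. Moreover $Y + M(\rho) - M(t) = M(\rho) = G^{\mu}(\rho)$, so $(Y, M, \mu, \rho)$ is admissible in the definition of $Y_{\co}^{\nu}(t)$, yielding $\mathbb{E}[G^{\mu}(\rho) \mid \mathcal{F}_t] \leq Y_{\co}^{\nu}(t)$. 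Taking essential suprema over $\mu \in \mathcal{U}(t,\nu)$ and $\rho \in \mathcal{T}_t$ gives the desired inequality.

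The only mildly technical point is checking that one may take the essential supremum inside the inequalities cleanly; this is standard once one verifies that the family $\{\mathbb{E}[G^{\mu}(\rho) \mid \mathcal{F}_t] : \mu \in \mathcal{U}(t,\nu),\, \rho \in \mathcal{T}_t\}$ is upward directed, which follows by a routine pasting argument using the stability of $\mathcal{U}(t,\nu)$ under concatenation at $\mathbb{F}$-stopping times and of $\mathcal{T}_t$ under the binary $\wedge$/$\vee$ operations on events in $\mathcal{F}_t$. I do not expect any serious obstacle beyond this bookkeeping.
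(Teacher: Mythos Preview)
Your proposal is correct and follows essentially the same two-inequality argument as the paper's proof: conditioning on $\mathcal{F}_t$ to get $Y_{\co}^{\nu}(t)\leq V_{\co}^{\nu}(t)$, and using the martingale $M$ with $M(\rho)=G^{\mu}(\rho)$ together with $M(t)=\mathbb{E}[G^{\mu}(\rho)\mid\mathcal{F}_t]$ for the reverse. One small remark: the upward-directedness discussion in your last paragraph is unnecessary here, since once you have $\mathbb{E}[G^{\mu}(\rho)\mid\mathcal{F}_t]\leq Y_{\co}^{\nu}(t)$ for every $(\mu,\rho)$, passing to the essential supremum on the left is immediate from the definition of $\esssup$ as the least a.s.\ upper bound---no approximating sequence is required.
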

\begin{proof}
(1) $Y_{\co}^{\nu}(t)\leq V_{\co}^{\nu}(t)$: Fix $Y\in L^1(\Omega, \mathcal{F}_t, \mathbb{P})$ and $(M,\mu,\rho)\in\mathcal{M}\times\mathcal{U}(t,\nu)\times\mathcal{T}_t$ such that $$Y+M(\rho)-M(t)\leq G^{\mu}(\rho).$$ Then by taking the conditional
 expectation, we get that
 \begin{equation*}
Y \leq \mathbb{E}[G^{\mu}(\rho)|\mathcal{F}_t]\leq V_{\co}^{\nu}(t),
 \end{equation*}
 which implies that $Y_{\co}^{\nu}(t)\leq V_{\co}^{\nu}(t)$. \\
(2)  $Y_{\co}^{\nu}(t)\geq V_{\co}^{\nu}(t)$: we get from \eqref{eq: property of martinagle set_appendix_subhedging}, for each $\mu\in\mathcal{U}(t,\nu)$ and $\rho\in\mathcal{T}_t$, there exists an $M\in\mathcal{M}$ such that
\begin{equation*}
\mathbb{E}[G^{\mu}(\rho)|\mathcal{F}_t]+M(\rho)-M(t)=G^{\mu}(\rho). 
\end{equation*}
In particular,
\begin{equation*}
\mathbb{E}[G^{\mu}(\rho)|\mathcal{F}_t]+M(\rho)-M(t)\leq G^{\mu}(\rho). 
\end{equation*}
Therefore, $\mathbb{E}[G^{\mu}(\rho)|\mathcal{F}_t]\leq Y_{\co}^{\nu}(t)$, which implies $V_{\co}^{\nu}(t)\leq Y_{\co}^{\nu}(t)$.
\end{proof}
\begin{remark}\label{re:subhedging}
It is clear that a collection of martingales which satisfies \eqref{eq: property of martinagle set_appendix_subhedging} always exists. In particular, one can take 
\begin{equation*}
\mathcal{M}_{\co}=\{\{\mathbb{E}[G^{\nu}(\rho)|\mathcal{F}_t]\}_{0\leq t\leq T}, \nu\in\mathcal{U}, \rho\in\mathcal{T}\}. 
\end{equation*}
\end{remark}
{\small
\bibliographystyle{amsplain}
\bibliography{mybib_T}}
\end{document}